\numberwithin{equation}{section}
\newcommand{\Ric}{{\rm Ric}}
\newcommand{\diam}{{\rm diam}}
\newcommand{\Alex}{\text{Alex\,}}
\newcommand{\Alexnk}{\text{Alex}^n(\kappa)}
\newcommand{\dN}{\mathds{N}}
\newcommand{\dR}{\mathds{R}}
\newcommand{\dS}{\mathds{S}}
\newcommand{\dZ}{\mathds{Z}}
\newcommand{\bx}{\text{\bf{x}}}
\newcommand{\cH}{\mathcal{H}}
\newcommand{\cS}{\mathcal{S}}
\newcommand{\cW}{\mathcal{W}}
\newcommand{\red}[1]{\textcolor{red}{#1}}
\newcommand{\Bad}{\textsl{Bad\,}}
\newcommand{\tang}[3]{\tilde\measuredangle\left({#1}\,_{#3}^{#2}\right)}
\newtheorem{theorem}{Theorem}[section]
\newtheorem{proposition}[theorem]{Proposition}
\newtheorem{lemma}[theorem]{Lemma}
\newtheorem{corollary}[theorem]{Corollary}
\theoremstyle{definition}
\newtheorem{definition}[theorem]{Definition}
\theoremstyle{remark}
\newtheorem{remark}{Remark}[section]
\theoremstyle{remark}
\newtheorem{example}{Example}[section]
\theoremstyle{remark}
\theoremstyle{remark}\newtheorem{conjecture}{Conjecture}[section]
\theoremstyle{remark}
\begin{document}

\title{Quantitative Estimates on the Singular Sets of Alexandrov Spaces}

\author{Nan Li}
\address{N. Li, Department of Mathematics, The City University of New York - NYC College of
Technology, 300 Jay St., Brooklyn, NY 11201}
\email{NLi@citytech.cuny.edu}

\author{Aaron Naber}
\address{A. Naber, Department Of Mathematics, Northwestern University, 2033 Sheridan Rd., Evanston, IL 60208-2370}
\email{anaber@math.northwestern.edu}

\thanks{The first author was supported by PSC-CUNY Grant 61533-0049. The second author was supported by NSF grant DMS-1809011.}

\maketitle

\begin{abstract}
Let $X\in\text{Alex}\,^n(-1)$ be an $n$-dimensional Alexandrov space with curvature $\ge -1$. Let the $r$-scale $(k,\epsilon)$-singular set $\mathcal S^k_{\epsilon,\,r}(X)$ be the collection of $x\in X$ so that $B_r(x)$ is not $\epsilon r$-close to a ball in any splitting space $\dR^{k+1}\times Z$.  We show that there exists $C(n,\epsilon)>0$ and $\beta(n,\epsilon)>0$, independent of the volume, so that for any disjoint collection $\big\{B_{r_i}(x_i):x_i\in \mathcal S_{\epsilon,\,\beta r_i}^k(X)\cap B_1, \,r_i\le 1\big\}$, the packing estimate $\sum r_i^k\le C$ holds. Consequently, we obtain the Hausdorff measure estimates $\mathcal H^k(\mathcal S^k_\epsilon(X)\cap B_1)\le C$ and $\mathcal H^n\big(B_r (\mathcal S^k_{\epsilon,\,r}(X))\cap B_1(p)\big)\leq C\,r^{n-k}$. This answers an open question in \cite{KLP17}.  We also show that the $k$-singular set $\mathcal S^k(X)=\underset{\epsilon>0}\cup\left(\underset{r>0}\cap\mathcal S^k_{\epsilon,\,r}\right)$ is $k$-rectifiable and construct examples to show that such a structure is sharp. For instance, in the $k=1$ case we can build for any closed set $T\subseteq \dS^1$ and $\epsilon>0$ a space $Y\in\text{Alex}^3(0)$ with $\mathcal S^{1}_\epsilon(Y)=\phi(T)$, where $\phi\colon\dS^1\to Y$ is a bi-Lipschitz embedding.  Taking $T$ to be a Cantor set it gives rise to an example where the singular set is a $1$-rectifiable, $1$-Cantor set with positive $1$-Hausdorff measure.
\end{abstract}

\tableofcontents

\section{Introduction}\label{s:intro}


Let $\Alex^n(\kappa)$ be the collection of $n$-dimensional Alexandrov spaces with (sectional) curvature $\ge \kappa$. The aim of this paper is to study the quantitative stratifications of $X\in\Alexnk$. Given $X\in\Alex^n(\kappa)$, it is known that the tangent cone $T_p(X)$ at every point $p\in X$ is a metric cone $C(\Sigma)$, where $\Sigma\in\Alex^{n-1}(1)$ and it is unique. The singular set $\cS(X)$ is the collection of points whose tangent cones are not isometric to $\dR^n$. It has a natural stratification $$\cS(X)=\cS^{n-1}(X)\supseteq\cS^{n-2}(X)\supseteq\dots\supseteq\cS^{1}(X)\supseteq\cS^{0}(X)\, ,$$ where
\begin{align}
	&\cS^k(X) \equiv \{p\in X: \text{ $T_p(X)$ is not isometric to $\dR^{k+1}\times C(\Sigma)$ for any metric space $\Sigma$}\}\, .
\end{align}

We may omit the $X$ and write for example $\cS^k=\cS^k(X)$ if it doesn't cause any ambiguity. Let us first state a notion of strong quantitative singular sets. We will then compare it with those used for the Ricci cases.

\begin{definition}[Quantitative splitting]\label{d:k-splitting} \quad
\begin{enumerate}
\item Given a metric space $Y$ and $k\in\dN$, we say that $Y$ is $k$-splitting if $Y$ is isometric to $\dR^k\times Z$ for some metric space $Z$.
\item Given a metric space $X$ we say that a metric ball $B_r(x)\subseteq X$ is $(k,\epsilon)$-splitting if there exists a $k$-splitting space $Y$ and $y\in Y$ such that $d_{GH}(B_r(x),B_r(y))\le\epsilon r$.
\end{enumerate}
\end{definition}

\begin{definition}[Strong quantitative singular sets]\label{d:qsingular}
Given $k,\epsilon,\,r>0$ and metric space $X$.
\begin{enumerate}
\item The $r$-scale $(k,\epsilon)$-singular set
\begin{align}
&\cS^k_{\epsilon,\,r}(X) \equiv \big\{x\in X: B_r(x)\text{ is not $(k+1,\epsilon)$-splitting} \big\}\,.
\label{d:qsingular.e1}
\end{align}
\item The $(k,\epsilon)$-singular set
\begin{align}
\cS^k_{\epsilon} \equiv\underset{r>0}\cap\cS^k_{\epsilon,\,r}= \big\{&x\in X: B_r(x)\text{ is not $(k+1,\epsilon)$-splitting for every $0<r\le 1$ } \big\}.
\end{align}
\end{enumerate}
\end{definition}


It's easy to see that $\cS^k = \underset{\epsilon>0}\cup\cS^k_{\epsilon} =\underset{\epsilon>0}\cup\left(\underset{r>0}\cap\cS^k_{\epsilon,\,r}\right)$. A weaker notion of quantitative singular sets, which we will denote by $\cW\cS^{\,k}_{\epsilon,\,r}$, was introduced in \cite{CN13} for manifolds with lower Ricci curvature bounds, see (\ref{d:qs-ric.e1}) for a definition. A significance for (\ref{d:qsingular.e1}) is that it requires $B_s(x)$ to be $(k,\epsilon)$-non-splitting only at the scale $s=r$, but not for all $r\le s\le 1$ as required in (\ref{d:qs-ric.e1}). It is worth pointing out that notion (\ref{d:qsingular.e1}) is strictly stronger than (\ref{d:qs-ric.e1}) on manifolds with Ricci curvature bounds, while they are equivalent in some sense on Alexandrov spaces (see Section \ref{subsection:Strong and weak singularity}). The singular sets defined as in (\ref{d:qsingular.e1}) are not known to satisfy the estimates established in \cite{CJN18}, \cite{CN13} or \cite{CN15} for the Ricci cases.


It was proved in \cite{CC97-I} that if $X$ is a Gromov-Hausdorff limit of $n$-dimensional, $v$-noncollapsed Riemannian manifolds with $\Ric\ge-(n-1)$, then the Hausdorff dimension $\dim_{\cH}(\cW\cS^k)\le k$. Under the same assumptions, it was proved in \cite{CJN18} that for any $0<r,\, \epsilon\le 1$, there exists a constant $C(n,v,\epsilon)>0$ such that for any $p\in X$, it holds that
\begin{align}
  \text{vol}(B_r(\cW\cS^{\,k}_{\epsilon,\,r}(X))\cap B_{1/2}(p))\le C(n,v,\epsilon)r^{\,n-k}.
  \label{e:vol.est.ric}
\end{align}
It was also proved in \cite{CJN18} that $\cW\cS^{\,k}_{\epsilon}(X)$ is $k$-rectifiable.  For $X\in\Alexnk$, it is proved in \cite{BGP} that the Hausdorff dimension $\dim_\cH(\cS^k(X))\le k$, and it was asked in \cite{KLP17} wether the $(n-2)$-dimensional packing estimate holds for $\cS^{n-2}_\epsilon(X)$. In this paper, we prove the $k$-packing estimates and the $k$-rectifiability of $\cS^k_\epsilon(X)$ for every $0\le k\le n$.  Moreover, all of our estimate are independent on the volume of unit balls in $X$. Note that it is crucial to have a positive lower volume bound in \cite{CJN18}, \cite{CN13} and \cite{CN15}, to obtain estimates such as (\ref{e:vol.est.ric}) for manifolds with lower Ricci curvature bounds. It is not known whether the volume dependence can be removed for the Ricci cases.

%
%
%

\begin{theorem}[Packing estimate]\label{t:packing_estimate}  For any $n\in\dN$ and $\epsilon>0$ there exists $C=C(n,\epsilon)>0$ and $\beta=\beta(n,\epsilon)>0$ such that the following hold for any $(X,p)\in\Alex^n(-1)$. If $x_i\in \cS^k_{\epsilon, \, \beta r_i}(X)\cap B_1(p)$ and  $\{B_{r_i}(x_i)\}$ are disjoint with $r_i\le 1$ for all $i\in\mathds I$, then
\begin{align}\sum_{i\in\mathds I} r_i^k < C.\end{align}
In particular, if $x_i\in \cS^k_{\epsilon,r}(X)\cap B_1(p)$ and $\{B_{r}(x_i)\}$ are disjoint with $r\le 1$, then $|\mathds I|<Cr^{-k}$.

\end{theorem}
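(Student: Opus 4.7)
The plan is to prove the packing estimate by a covering argument driven by a quantitative cone-splitting principle, in the spirit of \cite{CJN18} but replacing the Ricci-curvature machinery with the Alexandrov splitting theorem and Toponogov comparison. I would not induct on $k$ but instead carry out a single scale-by-scale recursion; one fixes $\epsilon$ and chooses auxiliary parameters $\beta \ll \eta \ll \delta \ll \epsilon$ in that order.

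The technical heart is a quantitative cone-splitting lemma for Alexandrov spaces, of the following rough form: for every $\eta, \epsilon' > 0$ there exists $\delta = \delta(n, \eta, \epsilon') > 0$ such that if $B_{2r}(z)$ is $(k, \delta)$-splitting with effective axis $V_z^k$, and there exist auxiliary points in $B_r(z)$ whose configuration is $\eta r$-transverse to $V_z^k$ and at which compatible $(k, \delta)$-splittings persist, then $B_r(z)$ is in fact $(k{+}1, \epsilon')$-splitting. The proof would use Toponogov comparison to show that the segments from $z$ to the auxiliary points are $\epsilon' r$-close to lines transverse to $V_z^k$, and then invoke the Alexandrov line-splitting theorem quantitatively to extract the new $\dR$-factor.

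Granted the lemma, I establish the following contrapositive confinement at each scale $r$: if $z \in \cS^k_{\epsilon, \beta r}$ and $\beta$ is chosen small relative to $\delta$ and $\eta$, then the sub-collection $\{x_i \in B_{r/2}(z) : r_i \le \beta \eta r\}$ must lie in an $\eta r$-tubular neighborhood of some $k$-dimensional affine subspace $L_z^k$. Otherwise one could extract auxiliary points realizing the hypothesis of the cone-splitting lemma and upgrade the splitting at $z$ to $(k{+}1, \epsilon)$, contradicting $z \in \cS^k_{\epsilon, \beta r}$. Combined with the disjointness of $\{B_{r_i}(x_i)\}$ and a standard Vitali estimate in the tube around $L_z^k$, this yields the local $k$-content bound $\sum_{x_i \in B_{r/2}(z)} r_i^k \le C(n, \epsilon) r^k$. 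Iterating this dyadically from $r = 1$ downward, covering $B_1(p)$ by $O(1)$ balls at each stage and summing the resulting geometric series, one obtains $\sum_i r_i^k \le C(n, \epsilon)$.

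The principal obstacle is the cone-splitting lemma itself: since Alexandrov splittings do not come with an automatic reflection across the splitting axis, producing a line through $z$ transverse to $V_z^k$ from only one-sided auxiliary data requires additional structure. The natural remedy is to exploit the fact that the auxiliary points lie in the singular collection $\{x_i\}$, so their own scales $r_i$ give a natural sub-scale at which the tangent-cone uniqueness of Alexandrov spaces can be activated, after which Toponogov comparison forces transversality. A secondary point is preserving volume-independence of all constants, which works here because none of the estimates use a relative volume lower bound; this is the key structural difference from the Ricci setting of \cite{CJN18}, where such independence is not known.
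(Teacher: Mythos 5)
Your outline has two genuine gaps, and they sit exactly where the real work of the theorem lies. First, your confinement step needs $B_{2r}(z)$ to be $(k,\delta)$-splitting before the cone-splitting lemma can even be invoked, but the theorem assumes no splitting whatsoever at any scale, and you have explicitly discarded the induction on $k$ that would supply it. If $B_r(z)$ does not almost $k$-split, there is no ``effective axis $V_z^k$'' and no $k$-plane $L_z^k$ to confine to; the singular points in such a ball are in no way constrained to a $k$-dimensional tube. The paper resolves this precisely by inducting on $k$ (Theorem \ref{t:packing_estimate2}): one decomposes $B_1(p)$ into the pieces $\widetilde\cS^k_{\delta,R_\alpha}\setminus\widetilde\cS^k_{\delta,R_{\alpha+1}}$, covers each by balls $B_{\rho_\alpha}(y_j^\alpha)$ on which a $(k{+}1,\delta)$-splitting \emph{does} exist at a definite fraction of the scale, and uses the inductive hypothesis for the $k$-stratum to bound the number $|\mathds J_\alpha|\le C R_\alpha^{-k}$ of such balls. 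Without this (or an equivalent device) your scale-by-scale recursion cannot start on the non-splitting regions.

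Second, even where splitting is available, ``confinement to an $\eta r$-tube around $L_z^k$ plus disjointness plus a standard Vitali estimate'' does not give $\sum_{x_i\in B_{r/2}(z)} r_i^k\le C r^k$: your confined sub-collection consists of balls with $r_i\le\beta\eta r$, i.e.\ much smaller than the tube width, and disjoint small balls filling a tube of width $\eta r$ can have $\sum r_i^k$ as large as $r^k(\eta r/\rho)^{n-k}$ for radii $\rho\to 0$. One must therefore recurse inside the tube, and the naive recursion multiplies the content bound by a fixed constant $>1$ at every dyadic stage with no decay, so ``summing the resulting geometric series'' presumes a gain that has not been established; this compounding is exactly why arguments of this shape (as in \cite{CN13}) only yield bounds with an $\epsilon$-power loss. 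The sharp, scale-free bound in the paper comes from a different mechanism that your sketch lacks: the pointwise finiteness of bad scales, proved via packing-number monotonicity and ``almost packing cone implies almost metric cone'' (Lemma \ref{l:finite_bad_radii}, Lemma \ref{t:GH-map on packing-Alex}), combined with the Alexandrov-specific facts that splitting maps restrict to all sub-balls (Proposition \ref{p:split-theory}) and that non-splitting of $B_{\beta r_i}(x_i)$ forces the splitting map to be almost distance-preserving between centers (Lemma \ref{l:cone split}). These feed into the key level-set estimate (Lemma \ref{l:finite non-sym scale-1}, Theorem \ref{t:finite non-sym scale}): each fiber $u^{-1}(z)$ meets at most $N(n,\epsilon)$ of the balls, \emph{uniformly over all scales}, because each nontrivial step of the covering recursion consumes a bad scale and there are only $N(n,\epsilon)$ of them per point. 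This bounded intersection number in $\dR^k$ is what converts disjointness into $\sum r_i^k\le C$ without any per-scale loss; your appeal to tangent-cone uniqueness at the scales $r_i$ is qualitative and provides no substitute for it.
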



%
%



\begin{example}\label{exam:intro}
There exists Alexandrov spaces (in fact non-collapsed Gromov-Hausdorff limits of manifolds with $\sec\ge 0$) whose singular set is dense. Such a space was constructed in \cite{OS94}. Begin with a regular tetrahedron $X_1$ in $\dR^3$. Suppose convex polyhedra $X_k$ with triangular faces $\Delta_i$, $i=1,2\dots,4\cdot 3^{k-1}$ has been constructed. Let $x_i$ be the centroid of face $\Delta_i$. Let $y_i\in\dR^3$ so that $d(y_i,X_k)=d(y_i,x_i)=d_k^{\,i}>0$. Let $Y_i$ be the tetrahedron formed by $y^i$ and $\Delta_i$. Define $X_{k+1}=X_k\cup(\cup_i Y_i)$. The constants  $d^{\,i}_k=d^{\,i}_k(X_k)$ can be chosen small enough so that $X_{k+1}$ is convex. We have that $\partial X_k\in\Alex^2(0)$ for all $k$. Thus $\displaystyle Y=\lim_{i\to\infty}\partial X_k\in\Alex^2(0)$. It's easy to see that if all $X_k$ are convex, then $\underset{i}\max\{d_k^i\}\to 0$ as $k\to\infty$.

The set of singular points $\cS^0(Y)\supseteq\bigcup_{i,\,k}\{x^{\,i}_k\}$ is dense in $Y$. However, $|\cS^0_\epsilon|<N(\epsilon)$, asserted by Theorem \ref{t:packing_estimate}. For this example, we can get an explicit estimate using Gauss-Bonnet formula. For each $p\in Y$, we have that the tangent cone $T_p(Y)=C(\dS_\beta^1)$ with $0<\beta\le 1$. Let $\theta_p=2\pi \beta$ be the cone angle. Then we have $\cS^0_\epsilon=\Big\{p\in Y\colon \theta_p\le 2\pi-\epsilon\Big\}.$ Note that for any $p\in Y$ the Gaussian curvature $K_p\ge 0$  and $K_p=(2\pi-\theta_p)\delta_p$ if $p\in\cS^0_\epsilon$, where $\delta_p$ is the Dirac delta function at $p$. By Gauss-Bonnet formula, we have for $\epsilon_i=2^{-i}$
$$4\pi=\int_Y K\ge\sum_{i=0}^\infty\sum_{p\in\cS_{\epsilon_{i+1}}^0\setminus \cS_{\epsilon_i}^0}(2\pi-\theta_p)\geq \sum_{i=0}^\infty\epsilon_{i+1}\, |\cS_{\epsilon_{i+1}}^0\setminus \cS_{\epsilon_i}^0|.$$
In particular, we have the estimate $|\cS^0_\epsilon|\le \frac{4\pi}{\epsilon}$. $\square$
\end{example}

The statement of Theorem \ref{t:packing_estimate} (ii) is not true without a quantitative control of $\beta=\beta(n,\epsilon)$, if $\inf\{r_i\}=0$. See the following example.

\begin{example}
  Let $X=C(S^1_\rho)$ be a metric cone over a circle with radius $\rho=\frac1{20}$. Let $p$ be the cone point and choose points $x_i\in X$, so that $d(p,x_i)=3^{-i}$, $i=0, 1,2,\dots$. Consider disjoint collection $\mathcal C=\big\{B_{r_i}(x_i): r_i=\frac12\cdot 3^{-i}\big\}$. By the cone structure, we have $d_{GH}\big(B_{r_i}(x_i),\, Z\times[-r_i,r_i]\big)\ge\frac1{10}r_i\sin(\pi\rho)>\frac1{100}r_i$ for any metric space $Z$. Thus $x_i\in\cS^0_{\epsilon,\,r_i}(X)$ for any $0<\epsilon<\frac1{200}$. However, $|\mathcal C|=\infty$.
\end{example}

By a standard covering technique, Theorem \ref{t:packing_estimate} implies the following Hausdorff measure estimate.

\begin{corollary}[Hausdorff measure estimate]\label{t:Hk-measure bound}
For any $n\in\dN$ and $\epsilon>0$ there exists $C=C(n,\epsilon)>0$ such that for any $X\in\Alex^n(-1)$ and $p\in X$, we have the Hausdorff measure estimate
  \begin{align}
    \mathcal H^k\big(\cS^k_{\epsilon}(X) \cap B_1(p)\big)<C(n,\epsilon).
  \end{align}
\end{corollary}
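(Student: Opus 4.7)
The plan is to deduce Corollary \ref{t:Hk-measure bound} directly from Theorem \ref{t:packing_estimate} via a Vitali-type covering at each scale, exploiting the crucial fact that the constants $C(n,\epsilon)$ and $\beta(n,\epsilon)$ in the packing theorem are scale-invariant (i.e.\ independent of the radii $r_i$).

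First I would observe that by definition $\cS^k_\epsilon(X)\subseteq \cS^k_{\epsilon,\,s}(X)$ for every $0<s\le 1$, since $\cS^k_\epsilon=\bigcap_{r>0}\cS^k_{\epsilon,\,r}$. Fix any scale $0<\delta\le 2/\beta(n,\epsilon)$ (so that $\beta\delta/2\le 1$), and take a maximal $\delta$-separated subset $\{x_i\}_{i\in\mathds{I}}$ of $\cS^k_\epsilon(X)\cap B_1(p)$. Then the balls $\{B_{\delta/2}(x_i)\}_{i\in\mathds{I}}$ are pairwise disjoint, while by maximality $\{B_\delta(x_i)\}_{i\in\mathds{I}}$ covers $\cS^k_\epsilon(X)\cap B_1(p)$. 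Since each $x_i\in\cS^k_\epsilon(X)\subseteq\cS^k_{\epsilon,\,\beta\delta/2}(X)$, Theorem \ref{t:packing_estimate} applied to the disjoint collection $\{B_{\delta/2}(x_i)\}$ gives $\sum_{i\in\mathds{I}}(\delta/2)^k<C(n,\epsilon)$, whence the cardinality estimate $|\mathds{I}|\le 2^k C(n,\epsilon)\,\delta^{-k}$.

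Consequently the cover $\{B_\delta(x_i)\}$ by balls of diameter at most $2\delta$ satisfies
$$\sum_{i\in\mathds{I}}\diam(B_\delta(x_i))^k\le (2\delta)^k\cdot 2^k C(n,\epsilon)\,\delta^{-k}=4^k C(n,\epsilon),$$
independently of $\delta$. Passing to the limit $\delta\to 0$ in the definition of the $k$-dimensional Hausdorff measure yields $\mathcal{H}^k(\cS^k_\epsilon(X)\cap B_1(p))\le C'(n,\epsilon)$, as claimed.

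This deduction is genuinely routine once Theorem \ref{t:packing_estimate} is in hand, so the corollary itself presents no real obstacle; the only mildly subtle point is that one must use the full singular set $\cS^k_\epsilon$ rather than a fixed-scale stratum $\cS^k_{\epsilon,\,r}$, as this is precisely what ensures the hypothesis $x_i\in\cS^k_{\epsilon,\,\beta r_i}$ of the theorem is met simultaneously for $r_i=\delta/2$ at every $\delta$. All the real difficulty lies upstream in the packing estimate itself.
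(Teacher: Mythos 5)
Your argument is correct and is exactly the ``standard covering technique'' the paper invokes (and uses again right after for Corollary \ref{t:quant_strat}): a maximal $\delta$-separated set in $\cS^k_\epsilon\cap B_1(p)$ gives disjoint balls $B_{\delta/2}(x_i)$ with centers in $\cS^k_{\epsilon,\beta\delta/2}$, Theorem \ref{t:packing_estimate} bounds their number by $C\delta^{-k}$, and letting $\delta\to 0$ bounds $\mathcal H^k$. The only (harmless) nitpick is that one should also take $\delta\le 2$ so that $r_i=\delta/2\le 1$ as required by the theorem, which is automatic in the limit.
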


We also have the following conjectural form of the constant in the above theorem:

\begin{conjecture}
  For any $(X,p)\in\Alex^n(-1)$, we have $$\cH^{k}(\cS^{k}_\epsilon\cap B_1(p))<C(n)\epsilon^{1-(n-k)}.$$
\end{conjecture}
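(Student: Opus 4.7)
\emph{Proof proposal.} The strategy is induction on the dimension $n$, combined with a Fubini-type reduction that transfers all $k \ge 1$ cases to the $k = 0$ case in a lower-dimensional Alexandrov cross section. It therefore suffices to supply two ingredients: a volume-independent bound on $|\cS^0_\epsilon(X)|$ of the form $C(n)\,\epsilon^{-(n-1)}$ for every $X\in\Alex^n(-1)$ (the ``$k=0$ base''), and a uniform quantitative splitting that descends from a point $p\in\cS^k_\epsilon(X)\setminus\cS^{k-1}_\delta(X)$ to an Alexandrov cross section $Z\in\Alex^{n-k}(-1)$ in which the singularity is carried by $\cS^0_{c(n)\epsilon}(Z)$.

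For the $k=0$ base, I would generalize the Gauss--Bonnet accounting of Example \ref{exam:intro}. For each $p\in\cS^0_\epsilon(X)$, the tangent cone $C(\Sigma_p)$ has $\Sigma_p\in\Alex^{n-1}(1)$ not $\epsilon$-close to $\dS^{n-1}$, which forces a Bishop--Gromov volume defect $\Vol(\dS^{n-1})-\Vol(\Sigma_p)\ge c(n)\,\epsilon^{n-1}$. The plan is to establish a singular Gauss--Bonnet--Chern identity on Alexandrov spaces that integrates these pointwise defects against a globally bounded quantity $\le C(n)$ over $B_1(p)$, yielding $\sum_{p\in\cS^0_\epsilon\cap B_1}\epsilon^{n-1}\le C(n)$ and hence the claimed bound. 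For $n=2$ this reduces exactly to the identity used in Example \ref{exam:intro}.

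For the reduction to $k=0$, fix $1\le k\le n-1$ and choose $\delta=\delta(n)>0$ small and independent of $\epsilon$, and decompose
\begin{align*}
\cS^k_\epsilon\cap B_1(p)\;=\;\bigl(\cS^k_\epsilon\cap\cS^{k-1}_\delta\bigr)\;\cup\;\bigl(\cS^k_\epsilon\setminus\cS^{k-1}_\delta\bigr).
\end{align*}
The first piece has vanishing $\cH^k$-measure since it is contained in the $(k-1)$-rectifiable set $\cS^{k-1}$. For $q$ in the second piece, every ball $B_r(q)$ with $r$ up to a definite scale is simultaneously $(k,\delta)$-splitting and $(k+1,\epsilon)$-non-splitting; a uniform cone splitting then furnishes $d_{GH}\bigl(B_r(q),B_r(0,q')\bigr)\le\delta r$ in $\dR^k\times Z$ with $Z\in\Alex^{n-k}(-1)$ and $q'\in\cS^0_{c(n)\epsilon}(Z)$. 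Applying a Vitali cover of this piece together with Fubini along the $\dR^k$ factor,
\begin{align*}
\cH^k\bigl(\cS^k_\epsilon\setminus\cS^{k-1}_\delta\bigr)\;\le\;C(n)\sum_i r_i^{\,k}\cdot\bigl|\cS^0_{c(n)\epsilon}(Z_i)\cap B_{r_i}\bigr|\;\le\;C(n)\,\epsilon^{-(n-k-1)},
\end{align*}
where the last inequality uses the $k=0$ base applied to each $Z_i\in\Alex^{n-k}(-1)$ and the packing estimate of Theorem \ref{t:packing_estimate} to control $\sum_i r_i^k$.

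The principal obstacle is the $k=0$ base in dimensions $n\ge 3$. The planar Gauss--Bonnet identity has no immediate Alexandrov analogue in higher dimension; producing a singular Gauss--Bonnet--Chern formula with explicit singular contributions, valid without volume lower bounds, appears to require genuinely new input beyond the packing methods developed in the present paper. A secondary obstacle is the requirement that the auxiliary parameter $\delta$ in the cone-splitting step be independent of $\epsilon$ and that the descent to the cross section $Z$ be \emph{linear} in $\epsilon$ rather than some power $\epsilon^\alpha$ with $\alpha<1$; otherwise the induction would compound losses and degrade the sharp exponent $\epsilon^{1-(n-k)}$. The exact conical structure of Alexandrov tangent cones makes this uniform splitting plausible, but its precise formalization remains to be carried out.
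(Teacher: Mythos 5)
You should first note that the statement you were asked to prove is not proved anywhere in the paper: it is stated explicitly as a \emph{conjecture}. What the paper actually establishes (Corollary \ref{t:Hk-measure bound}) is $\cH^k(\cS^k_\epsilon\cap B_1(p))<C(n,\epsilon)$ with an unquantified dependence on $\epsilon$; the sharp exponent $\epsilon^{1-(n-k)}$ is precisely what is left open. Your proposal, by your own admission, does not close the central gap, and in fact it has two. First, the $k=0$ base in dimensions $n\ge 3$ is not a technical refinement but the whole difficulty: the $n=2$ argument of Example \ref{exam:intro} works because on a surface the curvature is a nonnegative measure whose atom at a singular point has mass exactly equal to the angle defect, so Gauss--Bonnet converts the pointwise defect into a summable global quantity. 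No singular Gauss--Bonnet--Chern identity of this kind is known on Alexandrov spaces in higher dimension (even for smooth metrics with $\sec\ge 0$ the Gauss--Bonnet--Chern integrand is not sign-controlled in dimensions $\ge 6$), and your auxiliary claim that $\Sigma_p$ being $\epsilon$-far from $\dS^{n-1}$ forces a volume defect $\ge c(n)\epsilon^{n-1}$ is itself an unproved quantitative rigidity statement. Moreover any such ``total curvature'' accounting must survive collapse, since the conjecture is volume-independent; this is exactly the regime where volume-defect bookkeeping has no known substitute.

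Second, the reduction step as written degrades the very exponent you are trying to prove. In the final display you bound $\sum_i r_i^k$ by invoking Theorem \ref{t:packing_estimate}, whose constant is $C(n,\epsilon)$, not $C(n)$; feeding that into your estimate gives $C(n,\epsilon)\,\epsilon^{-(n-k-1)}$, which is no sharper than Corollary \ref{t:Hk-measure bound}. In addition, the inequality $\cH^k(\cS^k_\epsilon\setminus\cS^{k-1}_\delta)\le C(n)\sum_i r_i^k\,|\cS^0_{c(n)\epsilon}(Z_i)\cap B_{r_i}|$ is not justified by a single Vitali cover plus Fubini: a $(k,\delta)$-splitting ball only traps $\cS^k_\epsilon$ in $\Psi(\delta)r_i$-neighborhoods of the slices $\dR^k\times\{z_j\}$, and at smaller scales the splitting direction and the model $Z_i$ both change, so one needs an iterated, scale-by-scale covering argument (essentially the splitting-map and bad-scale machinery of Lemma \ref{l:finite non-sym scale-1} and Theorem \ref{t:packing_estimate2}) with all constants tracked polynomially in $\epsilon$ --- none of which is supplied. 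As it stands the proposal is a program whose two load-bearing ingredients (a higher-dimensional defect identity and an $\epsilon$-sharp packing bound) are both missing, so the conjecture remains open.
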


Indeed, we may even have the following stronger summable form, see Example \ref{exam:intro}:

\begin{conjecture}
  For any $(X,p)\in\Alex^n(-1)$ and let $\epsilon_i =2^{-i}$, then we have \newline $$\sum_{i=0}^\infty \epsilon_{i+1}^{(n-k)-1}\cH^{k}\Big(\big(\cS^{k}_{\epsilon_{i+1}}\setminus\cS^{k}_{\epsilon_{i}} \big)\cap B_1(p)\Big)<C(n).$$
\end{conjecture}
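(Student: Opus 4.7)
The case $n=2$, $k=0$ of this conjecture is precisely the Gauss--Bonnet estimate carried out in Example~\ref{exam:intro}. My plan is to reproduce that argument in general by exhibiting a single nonnegative Borel measure $\mu=\mu_X$ on $X$ satisfying two properties: first, a universal mass bound $\mu(B_2(p))\le C(n)$, and second, a local concentration bound asserting that for every $x\in \cS^k_{\epsilon}\setminus\cS^k_{\epsilon/2}$ there is a scale $r=r(x)\le 1$ with
\begin{align}
\mu\bigl(B_{r}(x)\bigr)\ \ge\ c(n)\,\epsilon^{(n-k)-1}\,r^{k}.
\end{align}
Granted such a $\mu$, one extracts a Vitali-type disjoint subcollection $\{B_{r_j}(x_j)\}$ whose $5$-enlargement covers $(\cS^k_{\epsilon_{i+1}}\setminus\cS^k_{\epsilon_{i}})\cap B_1(p)$, bounds $\cH^k$ of that slice by $\sum_j r_j^k$ using Theorem~\ref{t:packing_estimate}, and then combines the two displayed properties to conclude $\sum_i \epsilon_{i+1}^{(n-k)-1}\cH^k(\cdots)\le C(n)\mu(B_2(p))\le C(n)$.

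To construct $\mu$ in the codimension--two case $k=n-2$, the natural candidate is the angular defect measure coming from the Perelman--Petrunin DC-structure: along the codimension-two stratum each tangent cone is isometric to $\dR^{n-2}\times C(\dS^1_{\beta})$ and carries an intrinsic defect $2\pi(1-\beta)$, with a global mass bound arising from an intrinsic Gauss--Bonnet identity controlled by the curvature lower bound $-1$ and the fixed radius. For $k<n-2$ one would instead integrate, along the $(n-k-1)$-dimensional transverse cross-section $\Sigma'_x\in\Alex^{n-k-2}(1)$ of the would-be splitting factor at $x$, a quermassintegral weight measuring the deviation of $\Sigma'_x$ from the round sphere $\dS^{n-k-2}$. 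The concentration bound in the second property should then follow from a volume-comparison computation on $\Sigma'_x$: a failure to $\epsilon$-split off $\dS^k$ forces a spherical excess of order $\epsilon$ in each of the $n-k-1$ transverse angular directions, and multiplying gives an angular mass of order $\epsilon^{(n-k)-1}$ which is transported to a $\mu$-mass on $B_r(x)$ of the desired size by cone rigidity.

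The main obstacle is establishing the universal global mass bound for intermediate $k$. No literal Gauss--Bonnet formula is available in these dimensions, and the DC-structural curvature currents that are well understood on Alexandrov spaces only detect codimension-two singularities. A successful proof would presumably combine the $k$-rectifiability of $\cS^k_\epsilon$ proved in the body of this paper with a new coarea-type identity that represents the $(n-k-1)$-dimensional angular defect at $\cH^k$-a.e.\ point of $\cS^k$ as the slice of a single globally bounded curvature invariant of $X$; it is this coarea formula, rather than the covering step, where the essential difficulty lies, and it is also why the conjecture is genuinely stronger than the packing estimate, which only controls the \emph{existence} of the constant $C(n,\epsilon)$ without tracking its asymptotic dependence on $\epsilon$.
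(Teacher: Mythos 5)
You are attempting to prove a statement that the paper itself leaves open: this is stated as a Conjecture, and the authors prove only the packing estimate (Theorem \ref{t:packing_estimate}), the non-sharp bound $\cH^k(\cS^k_\epsilon\cap B_1)\le C(n,\epsilon)$ (Corollary \ref{t:Hk-measure bound}), and rectifiability (Theorem \ref{t:rectifiable}); the only instance of the summable estimate established anywhere in the paper is the two-dimensional Gauss--Bonnet computation of Example \ref{exam:intro}. So there is no proof in the paper to compare yours against, and your proposal, by your own admission, is a strategy rather than a proof: the entire content of the conjecture is concentrated in the two properties you postulate for the measure $\mu$, and neither is established. For $k<n-2$ you do not construct $\mu$ at all; you describe what its density along the transverse cross-section $\Sigma'_x\in\Alex^{n-k-2}(1)$ ought to look like, but the ``new coarea-type identity'' and the universal mass bound $\mu(B_2(p))\le C(n)$ are exactly the missing theorems, and nothing in the paper (nor in the known theory of curvature measures on Alexandrov spaces, which as you note sees only the codimension-two defect) supplies them. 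Even for $k=n-2$ the global mass bound is delicate: the conjecture demands a constant independent of volume, in possibly collapsed spaces with curvature only $\ge -1$, and no intrinsic Gauss--Bonnet identity with that uniformity is available off the shelf.

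There is also a quieter gap in the covering step you treat as routine. From a Vitali family $\{B_{r_j}(x_j)\}$ whose enlargements cover $(\cS^k_{\epsilon_{i+1}}\setminus\cS^k_{\epsilon_i})\cap B_1(p)$ at the scales $r_j=r(x_j)$ dictated by your concentration bound, the estimate $\sum_j r_j^k\le C$ controls only the Hausdorff \emph{content} of the slice at those scales, not $\cH^k$. To upgrade to $\cH^k$ you must re-cover inside each $B_{r_j}(x_j)$ at arbitrarily small scales, and the only tool the paper provides for that is Corollary \ref{t:Hk-measure bound} rescaled, whose constant is $C(n,\epsilon_{i+1})$ with unknown dependence on $\epsilon_{i+1}$; this reintroduces precisely the asymptotics in $\epsilon$ that the conjecture is trying to pin down, so the weight $\epsilon_{i+1}^{(n-k)-1}$ cannot simply be pulled out of the sum as written. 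In short: the skeleton (defect measure plus packing plus Vitali) is a reasonable way to attack the conjecture and is faithful to the $n=2$ model case, but both the existence of $\mu$ with a dimension-only mass bound and the content-to-measure upgrade with $\epsilon$-uniform constants are genuinely open steps, not technical details.
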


Now let $\{B_{r}(x_i)\}_{i=1}^N$ be a Vitali covering of $B_r(\cS^k_{\epsilon,\,r}(X)\cap B_1(p))$ with $x_i\in \cS^k_{\epsilon,\,r}(X)$. By Theorem \ref{t:packing_estimate}, we have that $N\le C(n,\epsilon)\, r^{-k}$. Combining it with $\mathcal H^n(B_{r}(x))\le C(n)\,r^n$ for every $x\in X$ and $r\le 1$, we have the following estimate, which only matters in the noncollapsing setting:

\begin{corollary}[Volume estimate]\label{t:quant_strat}
For any $n\in\dN$ and $\epsilon>0$ there exists $C=C(n,\epsilon)>0$ such that the following estimate holds for any $X\in\Alex^n(-1)$ and $p\in X$.
\begin{align}
\mathcal H^n\big(B_r(\cS^k_{\epsilon,\,r}(X))\cap B_1(p)\big)\leq C\, r^{n-k}\, .
\end{align}
\end{corollary}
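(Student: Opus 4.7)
The plan is to execute the Vitali covering argument sketched in the paragraph immediately preceding the statement. The entire content of the Corollary lies in Theorem~\ref{t:packing_estimate}; the deduction is a routine combination of covering and volume comparison.

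First, I would produce a maximal disjoint collection $\{B_{r}(x_i)\}_{i=1}^N$ with centers $x_i\in\cS^k_{\epsilon,r}(X)\cap B_2(p)$. By maximality, every point of $\cS^k_{\epsilon,r}(X)\cap B_2(p)$ lies within distance $2r$ of some $x_i$, so the enlarged family $\{B_{3r}(x_i)\}_{i=1}^N$ covers its full $r$-neighborhood. Any $y\in B_r(\cS^k_{\epsilon,r}(X))\cap B_1(p)$ admits a witness $x\in\cS^k_{\epsilon,r}(X)$ with $d(x,y)<r$, and then $x\in B_2(p)$, which yields the containment
\[
B_r(\cS^k_{\epsilon,r}(X))\cap B_1(p)\;\subseteq\;\bigcup_{i=1}^N B_{3r}(x_i).
\]

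Second, I would apply Theorem~\ref{t:packing_estimate} on the ball $B_2(p)$ (absorbing the factor of $2$ into the constants via rescaling, which preserves the curvature bound $\ge -1$ up to a harmless adjustment) to obtain $N\le C(n,\epsilon)\,r^{-k}$. Here I use the ``In particular'' form, with constant radii $r_i\equiv r$.

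Third, I would invoke the Bishop--Gromov volume comparison, valid on $\Alex^n(-1)$, to bound $\mathcal H^n(B_{3r}(x_i))\le C(n)\,r^n$ uniformly for $r\le 1$. Summing over $i$,
\[
\mathcal H^n\bigl(B_r(\cS^k_{\epsilon,r}(X))\cap B_1(p)\bigr)\;\le\;\sum_{i=1}^N \mathcal H^n(B_{3r}(x_i))\;\le\;N\cdot C(n)\,r^n\;\le\;C(n,\epsilon)\,r^{n-k}.
\]

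There is no substantial obstacle in this step, and no hidden dependence on a lower volume bound enters (in sharp contrast to the Ricci setting), precisely because both inputs---the packing estimate and Bishop--Gromov---are volume-independent on $\Alex^n(-1)$. The only mild technicality is that the $r$-neighborhood of $\cS^k_{\epsilon,r}(X)$ is taken in all of $X$ before intersection with $B_1(p)$; this is handled by placing centers in the slightly larger ball $B_2(p)$ as above.
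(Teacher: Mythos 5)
Your proof is correct and is essentially the paper's own argument: the paper likewise deduces the Corollary from a Vitali-type covering of the $r$-neighborhood by balls $B_{r}(x_i)$ (equivalently, a maximal disjoint family) with centers in $\cS^k_{\epsilon,r}(X)$, bounds their number by $C(n,\epsilon)r^{-k}$ via Theorem~\ref{t:packing_estimate}, and concludes with the volume bound $\mathcal H^n(B_r(x))\le C(n)r^n$. Your handling of centers slightly outside $B_1(p)$ by working in $B_2(p)$ is a harmless refinement of the same argument.
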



We also show that $\cS^k_\epsilon$ is $k$-rectifiable.

\begin{theorem}[$k$-rectifiability]\label{t:rectifiable}
   For any $X\in\Alex^n(-1)$ and $0\le k\le n$ we have that $\cS^k(X)$ is $k$-rectifiable.
\end{theorem}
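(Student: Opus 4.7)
The plan is to leverage the packing estimate (Theorem \ref{t:packing_estimate}) together with a cone-splitting argument to exhibit $\cS^k(X)$ as a countable union of bi-Lipschitz images of subsets of $\dR^k$. First, since $\cS^k(X) = \bigcup_{m \ge 1} \cS^k_{1/m}(X)$, it suffices to rectify $\cS^k_\epsilon(X) \cap B_1(p)$ for each fixed $\epsilon > 0$. An induction on $k$ reduces further to rectifying the top stratum $\cS^k_\epsilon \setminus \cS^{k-1}$, since the base case $k = 0$ is handled by the finiteness of $\cS^0_\epsilon \cap B_1(p)$ coming from Corollary \ref{t:Hk-measure bound}, and the inductive hypothesis takes care of $\cS^{k-1}$.

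For $x \in \cS^k_\epsilon \setminus \cS^{k-1}$ the unique tangent cone $T_x X$ splits off exactly $\dR^k$, so for any fixed auxiliary $\eta > 0$ there is a scale $r_x > 0$ below which $B_r(x)$ is $(k, \eta)$-splitting. Defining
\begin{equation*}
C^k_{\eta, m} = \bigl\{x \in \cS^k_\epsilon \cap B_1(p) : B_r(x)\text{ is }(k, \eta)\text{-splitting for all } 0 < r \le 2^{-m}\bigr\},
\end{equation*}
we have $\cS^k_\epsilon \setminus \cS^{k-1} \subseteq \bigcup_m C^k_{\eta, m}$, and it suffices to rectify each $C^k_{\eta, m}$. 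At each such $x$ and each scale $r \le 2^{-m}$ one obtains an approximate $k$-plane $V_{x, r}$ realizing the $(k, \eta)$-splitting; because $B_r(x)$ fails to be $(k+1, \epsilon)$-splitting, an Alexandrov cone-splitting lemma (from angle comparison plus the standard splitting theorem) forces any two such $V_{x, r}, V_{x', r'}$ at comparable scales and nearby centers to be close in the Grassmannian. Quantitatively, for $\eta = \eta(n, \epsilon)$ small enough one obtains summable tilt $\sum_i d(V_{x, 2^{-i}}, V_{x, 2^{-i-1}})^2 \le C(n, \epsilon)$ along scales.

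Combining this tilt control with the $k$-packing bound from Theorem \ref{t:packing_estimate}, a rectifiable Reifenberg theorem in the spirit of Naber--Valtorta applies to $C^k_{\eta, m}$ and produces a bi-Lipschitz covering by subsets of $\dR^k$; equivalently one can use the $(k, \eta)$-splitting functions $b^x = (b^x_1, \ldots, b^x_k) \colon B_1(p) \to \dR^k$ (Busemann-type functions aligned with $V_{x, 1}$) which are $1$-Lipschitz and, when restricted to a piece of $C^k_{\eta, m}$ on which the splitting direction is uniformly stable across scales, are bi-Lipschitz with constant depending only on $\eta$ and $n$. The main obstacle is the cone-splitting step, since one must control the tilt of $V_{x, r}$ without any noncollapsing volume hypothesis; unlike the Ricci setting of \cite{CJN18} one cannot appeal to $L^2$ estimates on harmonic splitting maps and must rely instead on the purely metric angle comparison and the existence and uniqueness of tangent cones afforded by Alexandrov geometry.
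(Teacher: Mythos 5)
Your overall architecture (reduce to $\cS^k_\epsilon$, discard the lower stratum, cover by sets where $(k,\eta)$-splitting holds at all small scales) matches the paper's first reduction, but the core of your argument has two unproved steps, and they are precisely where the difficulty lies. First, the summable tilt bound $\sum_i d(V_{x,2^{-i}},V_{x,2^{-i-1}})^2\le C(n,\epsilon)$ is asserted, not derived. Failure of $(k+1,\epsilon)$-splitting at a scale can, via cone-splitting, bound the \emph{number} of scales at which the $k$-plane turns by a definite amount (this is the role of Lemma \ref{l:finite_bad_radii} in the paper), but it does not give an $\ell^2$-summable excess; in the Ricci/Naber--Valtorta setting that summability comes from a monotone quantity with quantified defect ($L^2$ estimates on harmonic splitting maps, volume-ratio pinching), and you yourself note that no such tool is available here without a noncollapsing hypothesis. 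So the hypothesis needed to invoke a rectifiable Reifenberg theorem is exactly the missing ingredient, and your proof does not close this loop. Second, your fallback claim that the $1$-Lipschitz splitting functions are automatically bi-Lipschitz ``on a piece of $C^k_{\eta,m}$ on which the splitting direction is uniformly stable'' is also unproved, and it is false for arbitrary subsets of a $(k,\eta)$-splitting ball: a splitting map can crush distances in the non-Euclidean directions, so the lower bound $d(u(x),u(y))\ge(1-\epsilon)d(x,y)$ needs an argument showing that two points of $\cS^k_\epsilon$ cannot differ essentially in a non-Euclidean direction. This is the content of the paper's Lemma \ref{l:cone split}(ii): it requires a uniform conical structure at $x$ at the scale $d(x,y)$ (i.e.\ $T_x^\delta(r,2r)=0$, supplied by the finiteness of bad scales, Lemma \ref{l:finite_bad_radii} and Proposition \ref{p:interval no bad}) together with the fact that $y\in\cS^k_\epsilon$ is $(k+1,\epsilon)$-non-splitting at all scales, to conclude that $y$ must sit almost along the $\dR^k$-factor relative to $x$, whence $u$ nearly preserves $d(x,y)$.

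Note also that the Reifenberg machinery is unnecessary in the Alexandrov setting: by the restriction property of strong splitting maps (Proposition \ref{p:split-theory}(i)), one fixed $(k,\delta)$-splitting map $u\colon B_{50}(p)\to\dR^k$ serves at every point and every scale, so there is no tilting of planes to control at all. The paper exploits exactly this: after covering $\cS^k_\epsilon\setminus\cS^{k-1}_\delta$ by balls on which such a $u$ exists, it decomposes each ball into countably many pieces $\Gamma^t_x$ (points whose smallest $\delta$-bad scale exceeds $2t$, intersected with a ball of radius $t/2$) and shows $u$ is bi-Lipschitz on each piece via Lemma \ref{l:cone split}(ii), which yields $k$-rectifiability directly. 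If you want to repair your write-up, the most economical fix is to replace the tilt-plus-Reifenberg step by this bad-scales decomposition and prove the bi-Lipschitz lower bound for $u$ on each piece; as it stands, both routes you propose rest on claims that are not established.
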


It was asked for both Ricci and Alexandrov cases wether $\cS^k_\epsilon$ carries with a $k$-manifold structure, away from a zero $\mathcal H^k$-measure subset. It was proved in \cite{BGP} that for any $X\in\Alex^n(\kappa)$, if $p\in X\setminus\cS^{n-1}_\epsilon$, then there exists $r>0$ so that $B_r(p)$ is bi-Lipschitz to $B_r(0)\subset\dR^n$. If $p\in \cS^{n-1}_\epsilon\setminus\cS^{n-2}_\epsilon$, then there exists $r>0$ so that $B_r(p)$ is bi-Lipschitz to a ball centered at the origin in the half space $\dR^{n-1}\times \dR_{\ge 0}$. For $\cS^{n-2}_\epsilon$, we construct examples $X\in\Alex^n(\kappa)$ to show that it may contain no manifold point. 

\begin{theorem}\label{t:sharp rect.cantor}
  For any closed subset $T\subseteq \dS^1$ and $\epsilon>0$, there exists a sequence of 3-dimensional manifolds $M_i$ with $\sec_{M_i}\ge 0$ and $M_i\to Y\in\Alex^3(0)$, for which $\cS^{1}_\epsilon(Y)=\phi(T)$, where $\phi\colon\dS^1\to Y$ is a bi-Lipschitz embedding.
\end{theorem}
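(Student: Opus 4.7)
My plan is to realize $Y$ as the metric double $Y = D(K)$ of a convex body $K \subset \dR^3$ whose boundary contains a closed curve $\gamma \simeq \dS^1$ along which the dihedral structure encodes $T$. Decompose $\dS^1 \setminus T = \bigsqcup_j (a_j, b_j)$ into its constituent open arcs and pick $\alpha_0 = \alpha_0(\epsilon) \in (0, \pi)$ so that the flat $2$-cone $C(\dS^1_{1 - \alpha_0/\pi})$ is uniformly $(2,\epsilon)$-far from $\dR^2$. Start with the bipyramid $K_0 = \mathrm{conv}(\gamma \cup \{p_\pm\}) \subset \dR^3$, where $\gamma(t) = (\cos t, \sin t, 0)$ and $p_\pm = (0, 0, \pm h)$ with $h = \cot(\alpha_0/2)$, so that $K_0$ carries a fixed dihedral angle $\pi - \alpha_0$ along the whole edge $\gamma$. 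By Perelman's doubling theorem, $D(K_0) \in \Alex^3(0)$ with $\mathcal{S}^1_\epsilon = \gamma$. To selectively ``smooth out'' the edge at points of $\gamma$ corresponding to $\dS^1 \setminus T$, enlarge $K_0$ by a family of small convex bulges:
\[
K := \mathrm{conv}\Bigl(K_0 \cup \bigcup_{t \notin T} B_{\delta(t)}(\gamma(t))\Bigr),
\]
where $\delta \colon \dS^1 \setminus T \to (0, \infty)$ is continuous with $\delta(t) \sim d(t, T)^2$ and $\delta(t) \to 0$ as $t \to \partial T$. Set $Y = D(K) \in \Alex^3(0)$ and define $\phi \colon \dS^1 \to Y$ by $\phi(t) = \gamma(t)$ for $t \in T$ and $\phi(t) = (1 + \delta(t))(\cos t, \sin t, 0)$ (the outward radial apex of the $t$th bulge) for $t \notin T$; bi-Lipschitzness follows from the boundedness of $\delta$ and the smoothness of $\gamma$ in $\dR^3$.

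For $t \in T$, any bulge centered at $\gamma(t')$ with $t' \notin T$ has radius $\delta(t') \le d(t', T)^2 \le |t' - t|^2$, so bulges visible inside $B_r(\phi(t))$ all have radius $\le r^2$, i.e.\ $o(r)$; hence they give only vanishing perturbations of the local $K$-structure at scale $r$. The tangent cone of $Y$ at $\phi(t)$ is therefore $\dR \times C(\dS^1_{1 - \alpha_0/\pi})$, which by construction fails to be $(2, \epsilon)$-split, and a quantitative tangent-cone comparison upgrades this to the statement that $B_r(\phi(t))$ is not $(2, \epsilon)$-split for every $r \in (0, 1]$, so $\phi(t) \in \mathcal{S}^1_\epsilon(Y)$. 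For $t \notin T$, the bulge $B_{\delta(t)}(\gamma(t))$ is locally a genuine round ball inside $K$, so $\partial K$ is locally a smooth spherical cap near $\phi(t)$ and the double $Y$ is a regular Alexandrov $3$-space (tangent cone $= \dR^3$) in a neighborhood of $\phi(t)$ of radius $\sim \delta(t)$, giving $\phi(t) \notin \mathcal{S}^1_\epsilon(Y)$. That no additional singular stratum arises elsewhere in $\partial K$ follows because convex-hulling with round balls only smooths, never creates, edges.

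The approximating smooth manifolds $M_i$ are produced by first smoothing $K$ via Minkowski sum with $B_{1/i}(0)$, giving a smooth strictly convex body $K_i$; the double $D(K_i)$ is then further smoothed across the $C^{1,1}$ gluing surface by a standard Perelman-type procedure that preserves $\sec \ge 0$, yielding a smooth Riemannian $3$-manifold $M_i$ with $\sec_{M_i} \ge 0$ and $M_i \to Y$ in Gromov--Hausdorff topology. The main obstacle is the quantitative scale-by-scale analysis at points $t \in T$ that are accumulation points of $\dS^1 \setminus T$ (with $T$ a Cantor set being the paradigmatic case): one must control the cumulative effect of countably many bulges clustering near $t$, showing that at every scale $r \in (0, 1]$ the bipyramid wedge of angle $\pi - \alpha_0$ continues to dominate $B_r(\phi(t))$ up to $\epsilon$-error. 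The key estimate is that bulges of radius $\ge r$ lie at angular distance $\ge \sqrt{r}$ from $t$, so their contribution to $B_r(\phi(t))$ is small both in absolute size and in angular spread. Verifying smoothness of the bulge--bipyramid transition (no extraneous singular strata appear near $\partial T$) and the convergence $M_i \to Y$ will require the same careful convex-analytic bookkeeping.
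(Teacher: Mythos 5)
Your overall strategy (double of a convex body whose equatorial edge is selectively smoothed off the closed set $T$, then approximate by Minkowski/smoothing) is parallel in spirit to the paper's, but as written the construction does not prove the theorem, for two concrete reasons. First, the body $K$ you build still has the two bipyramid apexes $p_\pm$ as genuine vertices: your bulges sit only along the equator, so the tangent cone of $K$ at $p_\pm$ remains the cone over a spherical cap of radius $\alpha_0/2$, and in $Y=D(K)$ the tangent cone at $p_\pm$ is the cone over the double of that cap, which is far from $\dR^3$ and from any half-space; by the downward propagation of splitting (Corollary \ref{c:sp in all scale}) no ball $B_r(p_\pm)$, $r\le 1$, can be $(2,\epsilon)$-splitting, so $p_\pm\in\cS^1_\epsilon(Y)$ and the claimed equality $\cS^1_\epsilon(Y)=\phi(T)$ fails. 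This is exactly why the paper's proof inserts an extra smoothing step (passing from $X_1$ to $X_2$) to kill the residual tip $P$ and the bottom circle $\partial Z\times\{0\}$ before doubling; your proposal needs, and omits, the analogous capping of $p_\pm$.

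Second, the step you yourself call ``the main obstacle'' is not only unproved but is actually false with the stated radii, because the convex hull is nonlocal: material inside $B_r(\phi(t_0))$ can be generated by bulges far outside $B_r(\phi(t_0))$. Concretely, take $t_0\in T$ with gaps of definite length on both sides and $\delta(t)=d(t,T)^2$; the chord joining the radial apex of a bulge at angular distance $s_1$ on one side (radius $1+s_1^2$) to the apex of a bulge at fixed angular distance $s_2=L$ on the other side crosses the angle $t_0$ at radius about $1+\tfrac{s_1 L}{2}$, i.e.\ it protrudes past $\gamma(t_0)$ by an amount that is \emph{first order} in $s_1$. Letting $s_1\to 0$ this puts the outward radial direction into the tangent cone at $\gamma(t_0)$ (and with this choice of constant one can even check $\gamma(t_0)$ is swallowed into the interior of $K$), so the claim that bulges of radius $\le r^2$ produce only $o(r)$ perturbations of $B_r(\phi(t))$, and hence that the tangent cone at $\phi(t)$ is $\dR\times C(\dS^1_{1-\alpha_0/\pi})$, does not follow; at best it becomes borderline with a small constant, and even then one must prove that the added material approaches $\gamma(T)$ tangentially to the original wedge, uniformly at all scales $r\le 1$, not merely identify the tangent cone. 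This quantitative tangency is precisely what the paper's Lemma \ref{l:F convex f} is engineered to deliver (Greene--Wu smoothing on the sectors with error $e^{-\delta/d(x,\,\mathbb U\setminus\Omega)}$, which preserves one-sided directional derivatives, hence tangent cones, at $T$ while keeping convexity), and your proposal supplies no substitute. The remaining assertions --- that no new edge-type singularities arise on the tangency seams and bridging pieces of $\partial K$ (``convex-hulling with round balls only smooths'') and that $D(K_i)$ can be smoothed within $\sec\ge 0$ --- are plausible but are likewise asserted rather than argued, so as it stands the proposal is an outline with genuine gaps rather than a proof.
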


In particular, $\cS^{1}_\epsilon(Y)$ can be a Cantor set with $\mathcal H^{1}(\cS^{1}_\epsilon(Y))>0$, which contains no manifold points. Let $Y_n=Y\times\dR^{n-3}\in \Alex^n(0)$. We have that $\cS^{n-2}_\epsilon(Y_n)$ contains no $(n-2)$-dimensional manifold point.  This shows that the rectifiable structure in Theorem \ref{t:rectifiable} is sharp.  Examples for which $\cS^k_\epsilon$ contains no $k$-manifold point, where $n\ge 4$ and $1\le k\le n-3$, can be similarly constructed, with a good amount of extra technical work.

\section{Outline of the Proof}

We begin with the notion of bad scales $\Bad^\epsilon(p)$. Fix a point $p\in X$ and $\epsilon>0$, then we define a $\dZ_2$-valued function $T^\epsilon_p(r,R)$ to describe the symmetry of metric balls $B_s(p)$ over scales $0\le r\le s\le R$. Define $T^\epsilon_p(r, R)=0$ if there exists a cone space $C(\Sigma)$, depending on $p, r, R, \epsilon$ but not on $s\in[r, R]$,  so that
  \begin{align}
    d_{GH}\Big(B_s(p), \, B_s(p^*)\Big)\le \epsilon s\, ,
    \label{defn:bad scale.e1}
  \end{align}
  for every $s\in[r, R]$, where $p^*\in C(\Sigma)$ is the cone point. Otherwise we define $T^\epsilon_p(r,R)=1$.  In the case that $T^\epsilon_p(r, R)=0$, we say that the metric ball $B_s(p)$ is uniformly $(0,\epsilon)$-symmetric for $r\le s\le R$.  It is clear that if $[a_1,a_2]\subseteq [r, R]$ and $T^\epsilon_p(r,R)=0$, then $T^\epsilon_p(a_1,a_2)=0$.  Contrapositively, if we have $[r, R]\subseteq[b_1,b_2]$ with $T^\epsilon_p(r,R)=1$, then we also have $T^\epsilon_p(b_1,b_2)=1$.

\begin{definition}[Bad scales]\label{d:bad scale}
   Let $r_\alpha=2^{-\alpha}$, where $\alpha\in\dN$. The {\it $\epsilon$-bad scales} $\{r_{\beta_{\,(j)}}\}\subseteq\{r_\alpha,\, \alpha\in\dN\}$ of $p$, denoted by $\Bad^\epsilon(p)$, are defined recursively as follows. Let $r_{\beta_{\,(0)}}=r_0=1$ and
  $$r_{\beta_{\,(k+1)}}=
  \renewcommand{\arraystretch}{1.5}
   \left\{\begin{array}{@{}l@{\quad}l@{}}
    r_{\beta_{\,(k)}+1}, & \text{ if \quad} T_p^\epsilon(r_{\beta_{\,(k)}+1}, r_{\beta_{\,(k)}})=1;
    \\
    r_\alpha, & \text{ if there exists } \alpha\ge\beta_{\,(k)}+1 \text{ such that } T_p^\epsilon(r_\alpha, r_{\beta_{\,(k)}})=0 \text{ but } T_p^\epsilon(r_{\alpha+1}, r_{\beta_{\,(k)}})=1.
    \end{array}\right.
  $$
\end{definition}

Note that if $[r,R]$ contains no $\epsilon$-bad scale of $p$, then $B_s(p)$ is uniformly $(0,\epsilon)$-symmetric for $r\le s\le R$.  This definition is strictly stronger than the corresponding definitions in the Ricci curvature context.


The following is a key lemma to build up our covering techniques.

\begin{lemma}[Finiteness of the number of bad scales]\label{l:finite_bad_radii}
For any $n\in\dN$ and $\epsilon>0$,  there exists $N(n,\epsilon)>0$ such that for any $(X,p)\in\Alex^n(-1)$, the number of $\epsilon$-bad scales satisfies $|\Bad^\epsilon(p)|<N(n,\epsilon)$.
\end{lemma}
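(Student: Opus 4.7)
My plan is to argue by contradiction using Gromov's precompactness theorem for Alexandrov spaces combined with the uniqueness of the tangent cone at every point of an Alexandrov space, which is a crucial feature not available in the Ricci setting.

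Suppose the lemma fails: there exists $\epsilon>0$ and a sequence $(X_i,p_i)\in\Alex^n(-1)$ with $N_i := |\Bad^\epsilon(p_i)|\to\infty$. For each $i$, enumerate the bad scales $1 = r_{\beta_{(0)}}^{(i)} > r_{\beta_{(1)}}^{(i)} > \cdots > r_{\beta_{(N_i)}}^{(i)}$. By the definition of bad scales, on each interval $[r_{\beta_{(k+1)}}^{(i)}, r_{\beta_{(k)}}^{(i)}]$ there is an approximating cone $C_k^{(i)} = C(\Sigma_k^{(i)})$ with $\Sigma_k^{(i)}\in\Alex^{n-1}(1)$ such that $d_{GH}(B_s(p_i), B_s(o_k^{(i)})) \leq \epsilon s$ for all $s$ in this interval, where $o_k^{(i)}$ is the tip. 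At each bad scale the uniform cone approximation breaks: extending the interval downward by one dyadic scale makes $T_{p_i}^\epsilon$ equal to $1$, so no cone uniformly approximates across the slightly larger window.

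Next I would rescale at a carefully chosen bad scale so that the bad scales cluster in a bounded range. Let $s_i = r_{\beta_{(\lfloor N_i/2\rfloor)}}^{(i)}$ and consider $\hat X_i = s_i^{-1} X_i \in \Alex^n(-s_i^2) \subseteq \Alex^n(-1)$. In $\hat X_i$ the point $\hat p_i = p_i$ has roughly $N_i/2$ bad scales above scale $1$ and $N_i/2$ below. By Gromov precompactness, pass to a subsequence converging to $(Y, q)$ with $Y\in\Alex^m(-1)$, $m\leq n$. Because each $\Sigma_k^{(i)}\in\Alex^{n-1}(1)$ has $\diam\leq\pi$, the cones $\{C_k^{(i)}\}$ lie in a compact family, so after a further diagonal extraction the approximating cones also converge. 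A quantitative stability argument, trading $\epsilon$ for $\epsilon/2$, would then show that the bad scales of $\hat p_i$ accumulate to $(\epsilon/2)$-bad scales of $q$ within a bounded window of scales, producing infinitely many bad scales of $q$ there.

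The contradiction comes from the uniqueness of the tangent cone $T_q Y$: there exists $\tau=\tau(q,\epsilon)>0$ such that $d_{GH}(B_s(q), B_s(o_\infty))<(\epsilon/4)s$ for all $s\in(0,\tau]$, where $o_\infty$ is the tip of $T_qY$. Consequently $q$ admits a single cone approximation on $(0,\tau]$ and thus has no $(\epsilon/2)$-bad scales in $(0,\tau]$. An analogous argument at the upper end (either via a second rescaling by $r_{\beta_{(1)}}^{(i)}$, or using the asymptotic cone of $Y$ if $1/s_i\to\infty$) bounds bad scales from above by some $\tau'$. Hence $q$ has only finitely many bad scales in $[\tau,\tau']$, contradicting the accumulation obtained above.

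The main obstacle is the stability of the bad-scale notion under pointed Gromov-Hausdorff convergence. The definition $T^\epsilon_p(r,R)=1$ is a negative condition---no cone uniformly approximates across $[r,R]$---and negative conditions do not automatically pass to limits. The strategy is to leverage the compactness of the space of cross-sections $\Alex^{n-1}(1)$ to convert this into a positive condition on the cones: if $C_k^{(i)} \to C_k^\infty$ and the bad-scale positions cluster, then the approximating cones in the limit fit together in a way that either fails the uniform bound (yielding bad scales for $q$) or must converge to the unique tangent cone. Executing this stability with an explicit $\epsilon \mapsto \epsilon/2$ budget, together with correctly matching the rescaling so that enough bad scales survive in the limiting window, is where the main technical work will lie.
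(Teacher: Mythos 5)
There is a genuine gap at the heart of your compactness scheme: the step where you claim the bad scales of $\hat p_i$ ``accumulate to $(\epsilon/2)$-bad scales of $q$ within a bounded window of scales.'' Nothing forces the $N_i\to\infty$ bad scales to cluster in any fixed ratio window. After rescaling by your chosen middle bad scale $s_i$, the remaining bad scales may be spread out with rapidly growing dyadic gaps (think of bad scales sitting at radii like $2^{-2^j}$, $j=1,\dots,N_i$): then for every fixed window $[a,b]\subset(0,\infty)$ the number of bad scales of $\hat p_i$ lying in $[a,b]$ stays bounded (even of size one or two) as $i\to\infty$, while the total number still diverges. Badness does pass to the limit on a \emph{fixed} compact range of scales (your $\epsilon\mapsto\epsilon/2$ budget handles that), but a single blow-up limit only sees such a fixed range, so the limit point $q$ acquires only finitely many $(\epsilon/2)$-bad scales and there is no contradiction with the tangent-cone uniqueness statement at $q$, which itself only controls scales $(0,\tau]$ with $\tau$ depending on $q$. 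Iterating or diagonalizing over rescalings does not repair this: uniqueness of tangent cones yields the \emph{qualitative} finiteness of $\Bad^\epsilon(q)$ for each fixed point in a fixed space, but a uniform bound $N(n,\epsilon)$ requires a coercive quantity that is spent by a definite amount at every bad scale, across all scales simultaneously — precisely what a soft limiting argument cannot supply.

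This is exactly where the paper's proof diverges: it is an effective monotonicity-plus-pigeonhole argument. Lemma \ref{t:GH-map on packing-Alex} (``almost packing cone implies almost metric cone'') shows that if the $\delta$-packing number of $B_r(p)$ equals that of $B_R(p)$ and the induced subpacking suffers no definite distance distortion, then $T^{\epsilon}_p(r,R)=0$; Corollary \ref{c:packing-GH-bad scale} then converts each bad scale into either a jump of the packing number by $1$ or an increase of some rescaled pairwise distance by $\delta$. Since the packing number is bounded by $C(n,\delta)$ (Proposition \ref{p:packings_basic_properties_alex}, with no volume assumption) and the rescaled distances are bounded by $2$ and monotone, summing over disjoint scale intervals bounds $|\Bad^\epsilon(p)|$ by an explicit $N(n,\epsilon)$. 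If you want to salvage your approach, you would need to replace the appeal to tangent-cone uniqueness by such a monotone quantity; as written, the argument proves only pointwise finiteness, not the uniform bound claimed in the lemma.
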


The proof of this lemma is based on various point-wise monotonic properties of Alexandrov spaces. In particular, we prove Lemma \ref{t:GH-map on packing-Alex}, which we call ``almost packing cone implies almost metric cone".
It is an analogy of ``almost volume cone implies almost metric cone", which is the monotonic formula used for manifolds with lower Ricci curvature bound.  Note that both our monotonicity and the corresponding rigidity are strictly stronger than in the Ricci curvature context.

In order to state and prove our rigidity results we will need a splitting theory for Alexandrov spaces.

\begin{definition}[Strong splitting maps]\label{d:splitting map}
   Let $u_1, u_2,\dots, u_k\colon B_R(p)\to \dR$ be $\epsilon$-concave functions. The map $u=(u_1,\dots,u_k)\colon B_R(p)\to \dR^k$ is called a $(k, \epsilon)$-splitting map if the following are satisfied.
   \begin{enumerate}
  \renewcommand{\labelenumi}{(\roman{enumi})}
  \item $|\langle \nabla u_i,\nabla u_j \rangle-\delta_{ij}|\le\epsilon$.
  \item For any $x, y\in B_R(p)$ and any minimizing geodesic $\gamma$ connecting $x$ and $y$, it holds that $$\langle \uparrow_x^y, \nabla_x\, u_i \rangle + \langle\uparrow_y^x, \nabla_y\, u_i\rangle\le \epsilon.$$
      Here $\uparrow_{x}^{y}$ and $\uparrow_{y}^{x}$ denote the unit tangent directions of $\gamma$ at $x$ and $y$ respectively.
  \end{enumerate}
\end{definition}
\begin{remark}
  If $X$ is a smooth Riemannian manifold, the condition (ii) in the above definition says that on each geodesic, $u$ has a lower integral hessian bound.
\end{remark}

By the definition, we have that if $u\colon B_R(p)\to \dR^k$ is a $(k, \epsilon)$-splitting map, then $u|_{B_r}$ is also a $(k, \epsilon)$-splitting map for any $B_r\subset B_R(p)$. This restriction property of splitting maps is false in the context of manifolds with lower Ricci curvature bounds.  The existence and the properties of the strong splitting maps are discussed in Section \ref{subsec: Splitting theory}.\\

As in the standard dimension reduction, let us observe that for a metric cone $C(Z)$, the tangent cone of any point away from the cone tip splits off an extra $\dR$-factor comparing to $C(Z)$. We prove an effective version of this property in Lemma \ref{l:cone split}.

The monotonic property and the splitting theory lead to Theorem \ref{t:finite non-sym scale}. It says that there exist $\delta(n,\epsilon)$ and $\beta(n,\epsilon)>0$ so that if $u\colon B_{50}(p)\to \dR^k$ is a $(k,\delta)$-splitting function, and $\{B_{r_i}(x_i)\}$ is a disjoint collection with $x_i\in\cS^k_{\epsilon,\,\beta  r_i}$, then for any $z\in\dR^k$, we have
\begin{align}
  \Big|\Big\{i\in\mathds I\colon B_{\beta r_i}(x_i)\cap u^{-1}(z)\neq \varnothing\Big\}\Big|<N(n,\epsilon).
\end{align}


In particular, this Theorem implies that if we look at the associated collection of balls $\{B_{\beta r_i/4}( u(x_i))\}\subseteq \dR^k$ then its intersection number is at most $N(n,\epsilon)$.  That is, given any ball $B_{\beta r_j/4}(u(x_j))\in\{B_{\beta r_i/4}( u(x_i))\}$ it intersects at most $N-1$ other balls from the collection.  This shows that Theorem \ref{t:packing_estimate} holds if $B_1(p)$ is $(k,\epsilon)$-splitting.  We will then complete the proof by an induction on $k$.


In Section \ref{s:examples} we construct examples to prove Theorem \ref{t:sharp rect.cantor}. Let us explain the moral of the construction below. The technical details will be added to make it rigorous in Section \ref{s:examples}.

 Let $Z=\bar B_1\subset \dR^2$ be a closed unit disk and $X_0=Z\times[0,1]\in\Alex^3(0)$ be a solid cylinder. For $\epsilon>0$ small, we have $\cS^0(X_0)= \varnothing$ and $\cS_\epsilon^1(X_0)=\partial Z\times\{0,1\}$ is a union of two unit circles.


Now let $T\subseteq \partial Z$ be a closed subset, and thus $\partial Z\setminus T=\cup_\ell U_\ell$ is a collection of disjoint open intervals. Let $p$ be the center of $Z$ and define $\displaystyle C_\ell=\cup_{x\in U_\ell}\gamma_{px}$, where $\gamma_{x,y}$ denotes a line connecting $x$ and $y$, be the collection of sectors associated to the open sets $U_\ell$.  Let us observe for any $x\in \partial Z$ that the curvature at $\left(x, 1\right)\in X_0$ is $+\infty$ along the normal direction of $\partial Z\times\{1\}$ and strictly positive along its tangential direction.
This will allow us to smoothly ``sand off" each of $U_\ell\times\{1\}$ inside its convex hull $C_\ell\times[0,1]$, so that both the convexity of $X_0$ and the tangent cones at points in $X_0\setminus (\cup C_\ell\times[0,1])$  are preserved.  Let $X_1\in\Alex^3(0)$ be the resulted space.  In particular, the tangent cones at the points of $T\times\{1\}$ are preserved, and thus we have that $\cS^1_\epsilon(X_1)=(T\times\{1\})\cup(\partial Z\times\{0\})$.  Similarly, we can smooth near $\partial Z\times \{0\}$ in order to construct $X_2$ with $\cS^1_\epsilon(X_2)=T\times\{1\}$.  Now let $Y_2$ be the doubling of $X_2$, which is now a boundary free Alexandrov space $Y_2\in\Alex^3(0)$ for which $\cS(Y_2)=\cS^1_\epsilon(Y_2)=T$ and $\cS^0_\epsilon(Y_2)=\varnothing$. 

\vspace{.5cm}

\section{Monotonicity and Packing Numbers}

In this section we describe a monotone formula which plays an important role in the constructions of subsequent sections.

\begin{definition}[Packing]
Let $X$ be a metric space and $S\subseteq X$ with $\diam(S)<\infty$. For $\epsilon>0$, we say that a subset $\bx\equiv \{x_i\}\subseteq S$ is an {\it $\epsilon$-subpacking} if
\begin{equation}
 \text{$d(x_i,x_j)\geq \epsilon\, \diam(S) \text{ for every } i\neq j$.}
 \label{defn.enet1}
\end{equation}
An $\epsilon$-subpacking $\bx$ is said to be a packing if it is also $\epsilon\,\diam(S)$-dense in $S$.


We write $|\bx|=N$ as the number of elements in $\bx$ if it is finite.  If we want to signify the set in question we may write $\bx=\bx(S)$.  We define the $\epsilon$-packing number $P_\epsilon(S)$ by
\begin{align}
P_\epsilon(S)\equiv \sup\{|\bx|:\,\bx\text{ is an $\epsilon$-subpacking for }S\}\, .
\end{align}
A packing $\bx$ is called a maximal $\epsilon$-packing of $S$ if $|\bx|=P_\epsilon(S)<\infty$.
\end{definition}

In the case that $S=B_r(p)$ is a metric ball we may write $\bx(p,r)=\bx(B_r(p))$ and
the $\epsilon$-packing number $P_\epsilon(p,r)\equiv P_\epsilon(B_r(p))$.  Let us record some easy but useful properties which hold for general metric spaces.

\begin{lemma}
Let $X$ be a metric space with $\epsilon>0$ fixed.  Then the following hold:
\begin{enumerate}
\renewcommand{\labelenumi}{(\roman{enumi})}
\item ({\it Enlargement}) If $\bx$ is an $\epsilon$-subpacking of $B_r(x)$, then either $\bx$ is an $\epsilon$-packing or there exists $x'\in B_r(x)$ such that $\bx'\equiv \bx\cup \{x'\}$ is also an $\epsilon$-subpacking.
\item ({\it Maximal subpacking $\implies$ packing}) If $\bx$ is an $\epsilon$-subpacking of $B_r(x)$ with $|\bx|=P_\epsilon(x,r)<\infty$, then $\bx$ is an $\epsilon$-packing.
\item ({\it $\epsilon$-monotonicity}) If $\bx$ is an $\epsilon$-subpacking and $\epsilon'<\epsilon$, then $\bx$ is an $\epsilon'$-subpacking. In particular, for each $r>0$ we have that $P_{\epsilon'}(x,r)\geq P_\epsilon(x,r)$.
\end{enumerate}
\end{lemma}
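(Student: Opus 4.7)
All three statements are formal consequences of the definitions of subpacking and packing; the main task is to unwind the definitions carefully and chain the trivial implications. I would prove the three parts in the order $(\mathrm{iii}) \to (\mathrm{i}) \to (\mathrm{ii})$ since (ii) is immediate from (i).

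For $(\mathrm{iii})$, fix an $\epsilon$-subpacking $\bx = \{x_i\}\subseteq S$. By definition, $d(x_i,x_j)\geq \epsilon\,\diam(S)$ for $i\neq j$. Since $\epsilon'<\epsilon$ and $\diam(S)\geq 0$, we immediately have $d(x_i,x_j)\geq \epsilon\,\diam(S)\geq \epsilon'\,\diam(S)$, so $\bx$ is an $\epsilon'$-subpacking. Taking suprema over all $\epsilon$-subpackings gives $P_{\epsilon'}(x,r)\geq P_\epsilon(x,r)$.

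For $(\mathrm{i})$, suppose $\bx$ is an $\epsilon$-subpacking of $B_r(x)$ that fails to be a packing. Since $\bx$ already satisfies the separation condition \eqref{defn.enet1}, the obstruction must be the density condition: there exists $x'\in B_r(x)$ with $d(x',x_i)>\epsilon\,\diam(B_r(x))$ for every $i$ (if no such $x'$ existed, then $\bx$ would be $\epsilon\,\diam$-dense, hence a packing). Adjoining $x'$ to $\bx$ preserves the separation condition, so $\bx' = \bx\cup\{x'\}$ is an $\epsilon$-subpacking of $B_r(x)$.

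For $(\mathrm{ii})$, let $\bx$ be an $\epsilon$-subpacking with $|\bx|=P_\epsilon(x,r)<\infty$. If $\bx$ were not a packing, then by $(\mathrm{i})$ we could produce an $\epsilon$-subpacking $\bx'$ with $|\bx'|=|\bx|+1>P_\epsilon(x,r)$, contradicting the definition of $P_\epsilon(x,r)$ as the supremum of sizes of $\epsilon$-subpackings. Hence $\bx$ must already be an $\epsilon$-packing. The only subtlety worth watching throughout is that the separation and density thresholds are both measured against $\epsilon\,\diam(S)$ for the same $S=B_r(x)$, so enlarging $\bx$ by a point in $S$ does not alter the reference scale; no calculation beyond a direct comparison of distances is needed in any step.
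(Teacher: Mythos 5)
Your proof is correct, and it is exactly the routine unwinding of the definitions that the paper has in mind when it states this lemma without proof as a list of ``easy but useful properties.'' The one subtlety that matters --- that both the separation and density thresholds refer to $\epsilon\,\diam(B_r(x))$ for the fixed ambient ball, so adjoining $x'$ does not change the reference scale --- is the point you correctly flag, and nothing more is needed.
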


We wish to now discuss some more refined properties of $\epsilon$-packings and packing numbers for Alexandrov spaces.  To do this let us introduce the induced subpacking.  Indeed, this notation makes sense for any locally compact length metric space, but it is not so useful in general.

\begin{definition}[Induced subpacking]\label{d:induced subp}
Let $p\in X$, $R>0$ and for each $x\in\bar B_R(p)\setminus\{p\}$ we fix a geodesic $\gamma_{px}=\gamma^R_{px}$ connecting $p$ and $x$.  Given $0<r<R$, we define the inducting function $\varphi^{\,R}_r\colon \bar B_R(p)\to \bar B_r(p)$, $x\mapsto \bar x$, where $\bar x\in\gamma^R_{px}$ is the point with $d(p,\bar x)=\frac rR\cdot d(p,x)$. Now let $\{x_i\}_{i=1}^N$ be an $\epsilon$-subpacking of $\bar B_R(p)$ and $0<r<R$, then we call the collection of points $\{\varphi^{\,R}_r(x_i)\}_{i=1}^N$ the induced subpacking in $\bar B_r(p)$ of $\{x_i\}_{i=1}^N$.
\end{definition}

Note that the choice of geodesic $\gamma_{px}$ in the definition of $
\varphi^R_r$ is certainly not unique. However in the above definition of $\varphi^R_r$, such a choice is fixed for a given $R>0$ while independent of $0<r\leq R$.  If no confusion arises one may write $\gamma^R_{px} = \gamma_{px}$.

The proof of the following propositions are easy exercises based on the Toponogov comparisons.
\begin{proposition}\label{p:packings_basic_properties_alex}
Let $0<\epsilon, R< 1$. The following hold for any $(X,p)\in\Alex^n(-\epsilon)$ with $1\ge c\ge 1-R^2$. If $(X,p)\in\Alex^n(0)$, then $c\equiv 1$ can be chosen as a constant.
\begin{enumerate}
\renewcommand{\labelenumi}{(\roman{enumi})}
  \item ({\it Induced Packing}) Let $\{x_i\}$ be an $\epsilon$-subpacking of $B_R(p)$. For any $0<r<R$, the induced subpacking in $B_r(p)$ is a $c\epsilon$-subpacking.
  \item ({\it Monotonicity}) If $r\le R$, then the packing number $P_{c\epsilon}(x,r)\ge P_\epsilon(x,R)$.
  \item ({\it Bounds})  If $\bx=\{x_i\}$ is an $\epsilon$-subpacking for $B_r(p)$ with $0<r\leq 1$, then $1\leq |\bx|\leq C(n)\epsilon^{-n}$.  In particular, we have $1\leq P_\epsilon(p,r)\leq C(n)\epsilon^{-n}$.
  \item ({\it Density}) There exists a limit $\displaystyle\lim_{r\to 0} P_\epsilon(p,r)\equiv P_\epsilon(p)\le C(n)\epsilon^{-n}$, which we call the $\epsilon$-density at $x$. In fact, $P_\epsilon(p)=P_\epsilon(p^*,1)$, where $p^*$ is the cone point in the tangent cone at $p$.


\end{enumerate}
\end{proposition}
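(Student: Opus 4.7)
The plan is to read off all four items from a single Toponogov comparison, together with Bishop-Gromov volume comparison in Alexandrov spaces and Gromov-Hausdorff convergence of rescalings to the tangent cone. Item (i) carries the geometric content; (ii) is a formal consequence, (iii) uses a volume-packing estimate, and (iv) follows from (ii) combined with the convergence $r^{-1}(X,p)\to T_pX=C(\Sigma_p)$ and the scale invariance of packing numbers in metric cones.

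For (i), fix two points $x_i,x_j$ of an $\epsilon$-subpacking of $\bar B_R(p)$ and let $\bar x_i,\bar x_j$ denote their induced points along $\gamma_{px_i},\gamma_{px_j}$, so that $d(p,\bar x_\bullet)=\tfrac{r}{R}d(p,x_\bullet)$. Since $(X,p)\in\mathrm{Alex}^n(-\epsilon)$, monotonicity of comparison angles along geodesics out of $p$ yields $\tilde\measuredangle_p^{-\epsilon}(\bar x_i,\bar x_j)\ge\tilde\measuredangle_p^{-\epsilon}(x_i,x_j)$, and Toponogov's distance inequality bounds $d(\bar x_i,\bar x_j)$ from below by the distance in the space form of constant curvature $-\epsilon$ between points at the original comparison angle and shortened leg lengths $\tfrac{r}{R}d(p,x_\bullet)$. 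Comparing the hyperbolic and Euclidean laws of cosines, and using the Taylor expansions of $\sinh$ and $\cosh$ near zero, extracts a scaling factor $c=c(R,\epsilon)$ with $c\ge 1-R^2$ on $R<1$, and $c\equiv 1$ when the curvature bound is zero. Item (ii) is immediate by applying (i) to a maximal $\epsilon$-subpacking of $\bar B_R(p)$, since its induced image in $\bar B_r(p)$ is then a $c\epsilon$-subpacking.

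For (iii), the lower bound is the singleton packing $\{p\}$. For the upper bound, the balls $\{B_{\epsilon r/4}(x_i)\}$ are pairwise disjoint and contained in $B_{2r}(p)$, and by the Bishop-Gromov-type relative volume comparison of BGP for Alexandrov spaces one has $\mathcal H^n(B_{\epsilon r/4}(x_i))\ge c(n)\epsilon^n\,\mathcal H^n(B_{2r}(p))$ on $r\le 1$; summing over $i$ yields $|\bx|\le C(n)\epsilon^{-n}$.

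For (iv), item (ii) gives $P_{c(r_2)\epsilon}(p,r_1)\ge P_\epsilon(p,r_2)$ whenever $r_1\le r_2<1$, with $c(r_2)\to 1$ as $r_2\to 0$; combined with the uniform bound from (iii), this forces $P_\epsilon(p,r)$ to have a limit as $r\to 0$ at every $\epsilon$ outside a countable set where the integer values jump, while a small perturbation in $\epsilon$ and the monotonicity of $P_\epsilon$ in its subscript absorb the exceptional values. Identification with $P_\epsilon(p^*,1)$ is then read off from the standard pointed Gromov-Hausdorff convergence $(r^{-1}X,p)\to(C(\Sigma_p),p^*)$ together with the exact scale invariance of packing numbers in the cone. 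The main technical obstacle is reconciling the distortion $c(R)<1$ in (i) with the discreteness of integer-valued packing numbers, which is precisely what the small $\epsilon$-perturbation is designed to absorb.
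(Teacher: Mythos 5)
Your items (i)--(iii) are fine and follow exactly the route the paper has in mind (it gives no written proof, dismissing the proposition as an easy exercise in Toponogov comparison): monotonicity of comparison angles along radial geodesics plus the model law of cosines gives (i) with $c\equiv 1$ for curvature $\ge 0$ and $c\ge 1-R^2$ for curvature $\ge-\epsilon$, (ii) is the formal consequence, and (iii) is the standard disjoint-ball volume argument, with the minor care that the relative volume comparison should be taken at the centers $x_i$ (e.g. via $B_{2r}(p)\subseteq B_{4r}(x_i)$) and that the separation scale in Definition 3.1 is $\epsilon\,\diam$ rather than $\epsilon r$.

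The genuine gap is in (iv). Your argument, as written, only gives $\liminf_{r\to 0}P_{\epsilon'}(p,r)\ge \limsup_{r\to 0}P_{\epsilon}(p,r)$ for every $\epsilon'<\epsilon$, i.e.\ existence of the limit away from a countable set of thresholds; the proposed fix by ``a small perturbation in $\epsilon$'' does not close this, because $P_\epsilon(p,r)$ is integer-valued and only semicontinuous in $\epsilon$, so knowing the limits at nearby thresholds says nothing about the jump value itself. The identification $P_\epsilon(p)=P_\epsilon(p^*,1)$ at an exact threshold is also not a consequence of GH convergence plus scale invariance alone: subpackings only pass to limits in one direction, and a maximal packing of the cone ball whose separations are attained with equality cannot be transferred back to positive scales, since in the nonnegatively curved case the rescaled distances converge to the cone distances from below. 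What actually closes (iv) is two cone-specific observations, neither of which appears in your write-up: for the upper bound, send each $x\in B_r(p)$ to the point of the tangent cone determined by the direction of a geodesic $px$ and the radius $d(p,x)/r$; angle comparison shows this ``shadow'' map does not decrease rescaled distances, so $P_\epsilon(p,r)\le P_\epsilon(p^*,1)$ at every scale. For the lower bound, use the homothety of the cone to dilate a maximal subpacking of $B_1(p^*)$ slightly, making all separations strict while remaining in the open ball, and then push this strictly separated configuration down to $B_r(p)$ for all small $r$ via chosen geodesic directions; combining the two gives the limit and the identity for every $\epsilon$ (up to the factor $c\to 1$ in the curvature $\ge-\epsilon$ case). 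Without an argument of this kind, part (iv) is not established as stated.
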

\vspace{.5cm}

\section{Bad Scales}

This section is dedicated to proving Lemma \ref{l:finite_bad_radii}. It says that there are at most a finite number of bad scales at each point, and our space has a fixed cone structure which persists over all good scales. Let us begin with an easy proposition.

\begin{proposition}\label{p:interval no bad}
  For any $0\le r< R/2<R\le 1$, if $(r,R)\cap \Bad^\epsilon(x)=\varnothing$, then $T_x^\epsilon(r,R)=0$.
\end{proposition}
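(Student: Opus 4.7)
My plan is to identify the pair of bad scales of $x$ which bracket $[r,R]$ from outside, and then read off the conclusion directly from the recursion defining $\Bad^\epsilon(x)$. First I would note that since $r_{\beta_{(0)}}=1\ge R$, there is a well-defined smallest bad scale $r_{\beta_{(K)}}\ge R$. Assuming for the moment that a next bad scale $r_{\beta_{(K+1)}}$ exists and that $r>0$, the hypothesis $(r,R)\cap\Bad^\epsilon(x)=\varnothing$ together with the strict monotonicity of the bad-scale sequence would force $r_{\beta_{(K+1)}}\le r$, so that $[r,R]\subseteq[r_{\beta_{(K+1)}},r_{\beta_{(K)}}]$. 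It will then suffice to prove $T_x^\epsilon(r_{\beta_{(K+1)}},r_{\beta_{(K)}})=0$, since the same cone $C(\Sigma)$ realizing this will also work on the sub-interval $[r,R]$.

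The key step will be ruling out the first branch of Definition \ref{d:bad scale}, where $r_{\beta_{(K+1)}}=r_{\beta_{(K)}+1}=r_{\beta_{(K)}}/2$. In that branch one would have $r_{\beta_{(K+1)}}\ge R/2>r$ from the hypothesis $r<R/2$, while the minimality of $r_{\beta_{(K)}}$ among bad scales $\ge R$ would force $r_{\beta_{(K+1)}}<R$; together these place $r_{\beta_{(K+1)}}$ in the open interval $(r,R)$, contradicting $(r,R)\cap\Bad^\epsilon(x)=\varnothing$. This will leave us in the second branch, which by construction yields $T_x^\epsilon(r_{\beta_{(K+1)}},r_{\beta_{(K)}})=0$, and hence the desired conclusion by restriction.

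Two edge cases will need a brief separate treatment. If the recursion terminates at $r_{\beta_{(K)}}$, then $T_x^\epsilon(r_\alpha,r_{\beta_{(K)}})=0$ holds for every $\alpha\ge\beta_{(K)}+1$, and choosing any $\alpha$ with $r_\alpha\le r$ recovers the conclusion by the same restriction argument. The degenerate case $r=0$ is handled by letting $\alpha\to\infty$ and extracting a convergent limit of the corresponding cones via compactness of pointed Alexandrov spaces. I do not expect a serious obstacle here, as the proof is essentially bookkeeping driven by the recursive definition of $\Bad^\epsilon(x)$; the only delicate point is the Case 1 elimination, which is precisely where the hypothesis $r<R/2$ enters.
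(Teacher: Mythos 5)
Your proposal is correct and follows essentially the same route as the paper's proof: bracket $[r,R]$ between consecutive $\epsilon$-bad scales, use $r<R/2$ to rule out the first branch of Definition \ref{d:bad scale} (the paper phrases this as the index gap $\beta_{(k+1)}-\beta_{(k)}\ge 2$, which is the same observation as your interval-placement contradiction), then read off $T_x^\epsilon(r_{\beta_{(k+1)}},r_{\beta_{(k)}})=0$ from the second branch and conclude by restriction to $[r,R]$. Your explicit handling of the terminating recursion and of $r=0$ covers edge cases the paper leaves implicit, but it is the same argument.
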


\begin{proof}
The proof is almost taulogical.  Let $r_{\beta_{\,(k)}}=\inf\{r_\beta\in\Bad^\epsilon(x): r_\beta\ge R\}$ and $r_{\beta_{\,(k+1)}}$ be the next $\epsilon$-bad scale. Because $(r,R)\cap \Bad^\epsilon(x)=\varnothing$ we have that $r_{\beta_{\,(k+1)}}\le r< R/2<R\le r_{\beta_{\,(k)}}$. Therefore, $r_{\beta_{\,(k)}}/r_{\beta_{\,(k+1)}}>2$ and $\beta_{\,(k+1)}-\beta_{\,(k)}\ge 2$. By the definition  $T_x^\epsilon(r_{\beta_{\,(k+1)}}, r_{\beta_{\,(k)}})=0$. Note that $[r,R]\subseteq[r_{\beta_{\,(k+1)}}, r_{\beta_{\,(k)}}]$ and so we have that $T_x^\epsilon(r,R)=0$.
\end{proof}



%

To prove Lemma \ref{l:finite_bad_radii}, we need a result of the form ``almost packing cone implies almost metric cone". We begin with the following proposition. It follows directly from the definitions of $\epsilon$-packing and Hausdorff distance.

\begin{proposition}\label{l:GH-map on packing}
  Let $X$ and $Y$ be metric spaces whose diameters are both no more than 1. Let $\{x_i\}_{i=1}^{N_1}$ be an $\epsilon$-packing of $X$ and $\{y_i\}_{i=1}^{N_2}$ be an $\epsilon$-packing of $Y$. If $N_1=N_2=N$ and
  $$|d(x_i,x_j)-d(y_i,y_j)|\le\epsilon$$
  for every $1\le i,j\le N$, then $d_{GH}(X,Y)\le 4\epsilon$.
\end{proposition}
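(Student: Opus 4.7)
The plan is to exhibit an explicit correspondence $R\subseteq X\times Y$ of small distortion, then appeal to the standard identity $d_{GH}(X,Y)\le \tfrac12\,\mathrm{dis}(R)$.

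First I would use density of the packings to define nearest-packing-point maps. Since $\{x_i\}$ is an $\epsilon$-packing of $X$ and $\diam(X)\le 1$, for every $x\in X$ there exists an index $i(x)$ with $d_X(x,x_{i(x)})\le\epsilon$; fix such a choice and set $f(x)\equiv y_{i(x)}$. Symmetrically, for every $y\in Y$ pick $j(y)$ with $d_Y(y,y_{j(y)})\le\epsilon$ and set $g(y)\equiv x_{j(y)}$. Then form
\begin{equation*}
R\equiv\{(x,f(x)):x\in X\}\cup\{(g(y),y):y\in Y\}\subseteq X\times Y,
\end{equation*}
which is a valid correspondence since every $x\in X$ appears as a first coordinate and every $y\in Y$ as a second coordinate.

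Next I would bound the distortion of $R$ by a case analysis on two pairs $(a,b),(a',b')\in R$, using the hypothesis $|d_X(x_i,x_j)-d_Y(y_i,y_j)|\le\epsilon$ and the triangle inequality three times. For instance, when both pairs come from the first family, $a=x$, $b=y_{i(x)}$ and $a'=x'$, $b'=y_{i(x')}$, I can write
\begin{equation*}
|d_X(x,x')-d_Y(y_{i(x)},y_{i(x')})|\le d_X(x,x_{i(x)})+d_X(x',x_{i(x')})+|d_X(x_{i(x)},x_{i(x')})-d_Y(y_{i(x)},y_{i(x')})|\le 3\epsilon,
\end{equation*}
and the mixed and symmetric cases are handled the same way (each incurring at most three error terms of size $\epsilon$). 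This gives $\mathrm{dis}(R)\le 3\epsilon$, hence $d_{GH}(X,Y)\le \tfrac{3}{2}\epsilon\le 4\epsilon$.

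There is essentially no obstacle; the only thing to be careful about is the bookkeeping between the two families that make up $R$, and the fact that ``diameter $\le 1$" is precisely what converts an $\epsilon\,\diam$-density condition into an $\epsilon$-density condition so that the triangle inequality closes up with the stated constant. The approach is the standard ``net-and-compare'' argument for Gromov--Hausdorff approximations, tailored to the notion of packing introduced earlier in the section.
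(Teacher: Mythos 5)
Your proof is correct and is essentially the argument the paper has in mind when it asserts the proposition "follows directly from the definitions": use the $\epsilon\,\diam$-density of the packings to build a correspondence through the matched points $x_i\leftrightarrow y_i$ and bound its distortion by three applications of the triangle inequality. In fact your bookkeeping yields $\mathrm{dis}(R)\le 3\epsilon$, hence $d_{GH}(X,Y)\le\tfrac32\epsilon$, which is even sharper than the stated $4\epsilon$.
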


The first main result of this section is the following:\\

\begin{lemma}[Almost packing cone implies almost metric cone]\label{t:GH-map on packing-Alex}
  There is a universal constant $c>0$ such that the following holds for any $n\in\dN$ and $\epsilon\in(0,c)$. Let $(X,p)\in\Alex^n(-\epsilon)$ and $0\le r\le \frac12 R\le c$. Let $\bx(p,R)=\{x_i\}_{i=1}^N$ be a $\epsilon$-packing of $B_R(p)$. We have
    \begin{align}
    T_p^{\,\epsilon^{\,0.1}}(r,R)=0
    \label{l:GH-map on packing-Alex.e1}
  \end{align}
  if both of the following are satisfied.
   \begin{enumerate}
    \renewcommand{\labelenumi}{(\roman{enumi})}
    \item $P_{\epsilon}(p,r)=N=P_{\epsilon}(p,R)$.
    \item $r^{-1} d(\varphi^{\,R}_r(x_i),\,\varphi^{\,R}_r(x_j))\le R^{-1} d(x_i,x_j)+\epsilon$, for every $1\le i,j\le N$.
\end{enumerate}
Here $\varphi^{\,R}_r\colon \bar B_R(p)\to \bar B_r(p)$ is the inducing function defined as in Definition \ref{d:induced subp}.
\end{lemma}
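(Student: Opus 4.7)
The plan is to combine the Alexandrov angle monotonicity (upper bound on comparison angles as hinges shrink to $p$) with the reverse bound forced by hypothesis (ii) at the endpoints $r$ and $R$, in order to show that every hinge at $p$ whose tips lie on the induced packings is almost flat. This almost-flatness will make the cone map $y\mapsto (d(p,y),\xi_y)$ into the tangent cone $T_p(X)=C(\Sigma_p X)$ an $\epsilon^{0.1}s$-GH approximation from $B_s(p)$ onto $B_s(p^*)$, \emph{simultaneously} for all $s\in[r,R]$.

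Step 1 (Translate (ii) into an angle bound). Write $t_i=d(p,x_i)$, so $d(p,\varphi^R_r(x_i))=(r/R)t_i$. The Euclidean law of cosines expressed through comparison angles at $p$ gives $d(x_i,x_j)^2=t_i^2+t_j^2-2t_it_j\cos\tilde\angle^0_p(x_i,x_j)$ and the parallel identity at scale $r$. Dividing by $(r/R)^2$, inserting hypothesis (ii), and simplifying produces
$$\cos\tilde\angle^0_p\bigl(\varphi^R_r(x_i),\varphi^R_r(x_j)\bigr)\ge \cos\tilde\angle^0_p(x_i,x_j)-C\epsilon,$$
which translates to $\tilde\angle^0_p(\varphi^R_r(x_i),\varphi^R_r(x_j))\le\tilde\angle^0_p(x_i,x_j)+C\epsilon$.

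Step 2 (Matching Toponogov lower bound). In $\Alex^n(-\epsilon)$ the $(-\epsilon)$-comparison angle $\tilde\angle^{-\epsilon}_p$ of the hinge $(\varphi^R_s(x_i),\varphi^R_s(x_j))$ is nondecreasing as $s$ shrinks along the fixed geodesics $\gamma_{p,x_i},\gamma_{p,x_j}$, and for $R\le c$ the discrepancy between $(-\epsilon)$- and Euclidean comparison angles is $O(\epsilon)$. This yields $\tilde\angle^0_p(\varphi^R_s(x_i),\varphi^R_s(x_j))\ge \tilde\angle^0_p(x_i,x_j)-C\epsilon$ for every $s\in(0,R]$. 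Combined with Step~1 applied separately on the subintervals $[s,R]$ and $[r,s]$, we conclude
$$\bigl|\tilde\angle^0_p\bigl(\varphi^R_s(x_i),\varphi^R_s(x_j)\bigr)-\tilde\angle^0_p(x_i,x_j)\bigr|\le C\epsilon, \qquad s\in[r,R],\ i,j.$$
Taking $s\to 0$ in the monotone lower bound and using $\lim_{s\to 0}\tilde\angle^0_p(\varphi^R_s(x_i),\varphi^R_s(x_j))=\angle_p(x_i,x_j)$ yields $|\angle_p(x_i,x_j)-\tilde\angle^0_p(x_i,x_j)|\le C\epsilon$ for every packing pair, i.e.\ every such hinge at $p$ is almost flat.

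Step 3 (Cone model and GH estimate). Let $C(\Sigma)\equiv T_p(X)$ with apex $p^*$, and define $F\colon B_R(p)\to T_p(X)$ by $F(y)\equiv(d(p,y),\xi_y)$ using a chosen initial direction. Toponogov $\angle_p\ge\tilde\angle^0_p$ makes $F$ a $1$-Lipschitz map, while the almost-flatness from Step~2, combined with the cone distance formula in $T_p(X)$, upgrades $F$ to a $C\epsilon s$-near isometry on the induced packing $\bx_s\equiv\{\varphi^R_s(x_i)\}$ at every scale $s\in[r,R]$. By hypothesis (i) together with Proposition~\ref{p:packings_basic_properties_alex}(i)--(ii), $\bx_s$ is a maximal $c\epsilon$-subpacking and hence $c\epsilon$-dense in $B_s(p)$; the same hypothesis combined with Proposition~\ref{p:packings_basic_properties_alex}(iv) forces $P_{c\epsilon}(p^*,s)=N$ in the tangent cone, so $F(\bx_s)$ is correspondingly $c\epsilon$-dense in $B_s(p^*)$. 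Rescaling both balls by $s^{-1}$ to unit diameter and applying Proposition~\ref{l:GH-map on packing} to the two $N$-element packings gives $d_{GH}(B_s(p),B_s(p^*))\le C\epsilon\, s$, which is $\le \epsilon^{0.1}s$ for $\epsilon\le\epsilon_0(n)$; this is precisely $T_p^{\epsilon^{0.1}}(r,R)=0$.

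The main obstacle is Step~3, specifically verifying that the maximal packing property on the Alexandrov side persists to the tangent-cone side so that Proposition~\ref{l:GH-map on packing} can be invoked against a genuine packing of $B_s(p^*)$. This requires using hypothesis (i) together with the almost-flatness from Step~2 to rule out ``missing directions'' in $\Sigma_p(X)$ at the $\epsilon$-scale, and also absorbing the various $O(R^2)$ curvature corrections from Toponogov against the slack provided by the exponent jump from $\epsilon$ to $\epsilon^{0.1}$ in the conclusion.
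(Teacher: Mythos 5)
Your Steps 1--2 are fine as far as the one-sided estimates go, but the argument breaks at the point where you pass to the scale $s\to 0$ and adopt the tangent cone $T_p(X)=C(\Sigma_p X)$ as the model cone. The hypotheses (i) and (ii) only constrain the geometry of $X$ at scales in $[r,R]$; they say nothing about scales below $r$, and both the limit $\lim_{s\to 0}\tilde\angle^0_p(\varphi^R_s(x_i),\varphi^R_s(x_j))=\angle_p(x_i,x_j)$ and the packing number $P_\epsilon(p^*,1)=\lim_{s\to 0}P_\epsilon(p,s)$ are infinitesimal quantities. Toponogov monotonicity gives you only $\angle_p(x_i,x_j)\ge\tilde\angle^0_p(x_i,x_j)-C\epsilon$; the matching upper bound you assert in Step 2 would require the comparison angles to stabilize below scale $r$, which is not implied by anything assumed. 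Likewise, Proposition \ref{p:packings_basic_properties_alex}(iv) only gives $P_\epsilon(p^*,1)\ge P_\epsilon(p,r)=N$, possibly strictly, so the density of $F(\bx_s)$ in $B_s(p^*)$ claimed in Step 3 is unjustified. A concrete counterexample to your choice of model: let $X$ be a two-dimensional cone of small total angle whose vertex has been smoothed at a scale $\delta\ll r$. At scales $s\in[r,R]$ the space is exactly conical, so (i) and (ii) hold, and the conclusion $T_p^{\epsilon^{0.1}}(r,R)=0$ is true with $\Sigma$ a short circle; but the tangent cone at $p$ is $\dR^2$, the hinge angles at $p$ are much larger than the comparison angles seen on $[r,R]$, and $d_{GH}(B_s(p),B_s(0^2))\gg\epsilon^{0.1}s$ for $s\in[r,R]$. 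So your map $F$ into $T_p(X)$ cannot be a GH approximation at these scales.

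The fix is essentially what the paper does: the model cone must be manufactured from the data on $[r,R]$ itself rather than from the infinitesimal structure at $p$. The paper first uses (i), (ii) and Proposition \ref{l:GH-map on packing} to see that all rescaled balls $(R/s)\bar B_s(p)$, $s\in[r,R]$, are $4\epsilon R$-close to $\bar B_R(p)$, and then builds $\Sigma$ from the cross-section $S_R=\{d(p,\cdot)=R\}$ equipped with a chain-approximated intrinsic metric, mapping $C(\Sigma)$ to $X$ via the inducting function $\varphi^R$; the angle and triangle comparisons (Lemma \ref{l:prop.varphi}) are only needed down to scale $\epsilon^{0.5}R$, which is also how the paper avoids the degeneration of the law-of-cosines estimate for points close to $p$ (your uniform ``$C\epsilon$'' angle bound for all pairs silently divides by $t_it_j$ and is not uniform when some $t_i$ is small). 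A single cone built this way serves all scales $s\in[r,R]$, which is exactly what the definition of $T_p^{\epsilon^{0.1}}(r,R)=0$ demands.
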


\begin{proof}[Proof of Lemma \ref{t:GH-map on packing-Alex}]

Let us introduce the notation $(\lambda) \bar B_s\equiv(\bar B_s, \lambda d)$ to denote the rescaled space.  The proof consists of two points.  First, we will see that it is almost immediate from the assumed conditions that the mapping $\varphi^R_r: \bar B_{R}(p)\to (R/s)\bar B_s(p)$ is a GH map.  Second, we will show that $\bar B_{R}(p)$ is GH-close to a ball in a cone space $C(\Sigma)$, centered at the cone point. The combination of these two points prove the Lemma.

Let us discuss these points more carefully.  For simplicity, we will only prove the result for $X\in\Alex^n(0)$. The general case is similar with a modifications on $c$, which are just used to estimate the law of cosine formula in (\ref{l:GH-map on packing-Alex.e63}).  Now by the assumptions and the monotonic property, the induced subpacking $\{\varphi_s^R(x_i):x_i\in \bx(p,R)\}$ is a packing of $\bar B_s(p)$ and
  \begin{align}
    R^{-1} d(x_i,x_j)\le s^{-1} d(\varphi^{\,R}_s(x_i),\,\varphi^{\,R}_s(x_j))\le R^{-1} d(x_i,x_j)+\epsilon,
  \end{align}
  for every $s\in[r, R)$ and every $1\le i\neq j\le N$.

  By Proposition \ref{l:GH-map on packing}, for all $s\in[r, R)$, we therefore have that
  \begin{align}
    d_{GH}\Big(\bar B_R(p),\,(R/s) \bar B_s(p)\Big)\le 4\epsilon R\, ,
    \label{l:GH-map on packing-Alex.e11}
  \end{align}
  where $\varphi^{\,R}_s\colon \bar B_R(p)\to(R/s) \bar B_s(p)$ is a $8\epsilon R$-isometry. To prove (\ref{l:GH-map on packing-Alex.e1}) it therefore suffices to construct a metric cone $C(\Sigma)$ and show that
  \begin{align}
    d_{GH}(\bar B_R(p), \bar B_R(p^*))\le\frac12\epsilon^{\,0.1}R,
    \label{l:GH-map on packing-Alex.e45}
  \end{align}
  where $p^*\in C(\Sigma)$ is the cone point. Let us prove this by first assuming the following lemma, which we will prove later. Let $\iota_{\lambda}\colon \bar B_s\to (\lambda) \bar B_s$ be the identity map.

\begin{lemma} \label{l:prop.varphi} Let $S_\rho=\{x\in \bar B_R(p): d(p,x)=\rho\}$ be the $\rho$-cross section in $X$.
  \begin{enumerate}
  \renewcommand{\labelenumi}{(\roman{enumi})}
  \item For any $t\in[\epsilon^{\,0.5},1)$, the restricted map $\iota_{t^{-1}}\circ\varphi^{\,R}_{tR}|_{S_R}\colon S_R\to (t^{-1})S_{tR}$ is $\epsilon^{\,0.4}R$-onto.
  \item For every $x, y\in S_R$ and $t_1, t_2\in[\epsilon^{\,0.5}, 1)$, we have
    \begin{align}
    \left|\cos\tang{p}{x}{y}-\cos\tang{p}{\varphi_{t_1R}^R(x)}{\varphi_{t_2R}^R(y)}\right|
    &\le\epsilon^{\,0.4}.
    \label{l:GH-map on packing-Alex.e62}
    \end{align}
  \item For any $x, y\in S_R$, geodesic triangle $\triangle pxy$ is $\epsilon^{\,0.3}R$-close to a geodesic triangle in $\dR^2$, equipped with the extrinsic metrics.
\end{enumerate}
\end{lemma}

Using the above we now construct a metric cone $C(\Sigma)$ and define a GH-map $f\colon \bar B_R(p^*)\to \bar B_R(p)$, where $p^*\in C(\Sigma)$ is the cone point. Define a distance function $d_{S_R}$ on the $R$-cross section $S_R=\{x\in X: d(p,x)=R\}$ by
\begin{align}
  d_{S_R}(x,y)=\inf_{x_0,\,\dots,\,x_N\in S_R}\left\{\sum_{\alpha=1}^Nd_X(w_{\alpha-1},w_\alpha)\colon w_0=x, \,w_N=y, \,d_X(w_{\alpha-1},w_\alpha)\le\epsilon^{\,0.1}R\right\}.
\end{align}
Note that this is an approximation of the induced length space distance function on a subset.  It's clear that $d_{S_R}(x,y)\ge d_X(x,y)$, and thus if $d_{S_R}(x,y)=0$ then $x=y$. To verify the triangle inequality, we let $x,y,z\in S_R$. By the definition we have for any $\eta>0$ that there exists $w_\alpha\in S_R$ with $w_0=x$, $w_{N_1}=y$, $w_{N_2}=z$, $d_X(w_{\alpha-1},w_\alpha)\le\epsilon^{\,0.1}R$, so that $d_{S_R}(x,y)\ge \sum_{\alpha=1}^{N_1}d_X(w_{\alpha-1},w_\alpha)-\eta$ and $d_{S_R}(y,z)\ge \sum_{\alpha=N_1+1}^{N_2}d_X(w_{\alpha-1},w_\alpha)-\eta$. Then we have
\begin{align}
  d_{S_R}(x,y)+d_{S_R}(y,z)\ge \sum_{i=1}^{N_2}d_X(w_{\alpha-1},w_\alpha)-2\eta\ge d_{S_R}(x,z)-2\eta.
\end{align}
Letting $\eta\to 0$ we then obtain the triangle inequality.

Now let $(\Sigma, d_\Sigma)=(S_R,\frac1R d_{S_R})$ and $C(\Sigma)$ be the metric cone over $\Sigma$ and $p^*$ is the cone point. Let $(\tilde S_R,d_{\tilde S_R})=(\Sigma, R\, d_\Sigma)$ be the $R$-cross section in $C(\Sigma)$.
Let $\Pi\colon C(\Sigma)\to \tilde S_R$ by $a=(\bar a, d(p^*,\,a))\mapsto \bar a$ be the projection mapping. Identify $\tilde S_R$ with $S_R$ and let us define
\begin{align}
f\colon\bar B_R(p^*)\to \bar B_R(p) \text{ by } a\mapsto\varphi^{\,R}_{d(p^*,\,a)}\circ\Pi(a)\, .	
\end{align}

We first show that $f$ is $\epsilon^{\,0.4}R$-onto. Let $x\in \bar B_R(p)$. Note that for any $y\in \tilde S_R=S_R$, we have $(y, d_X(p,x))\in C(\Sigma)$ and $f((y, d_X(p,x)))=\varphi^{\,R}_{d(p,x)}(y)$. Thus the $\epsilon^{\,0.4}R$-onto property of $f$ follows from Lemma \ref{l:prop.varphi} (i).

Now we show that $f$ is $\frac12\epsilon^{\,0.1}R$-distance preserving. Let $a,b\in C(\Sigma)$, $x=f(a)$, $y=f(b)$ and $\gamma_{x,y}$ be a geodesic connecting $x$ and $y$. By Lemma \ref{l:prop.varphi} (i), for any partition $\{u_i\}$ of $\gamma_{x,y}$, there exist $w_i\in S_R$ and $s_i>0$ such that $d_X(\varphi^R_{s_iR}(w_i),u_i)\le\epsilon^{\,0.4}R$. Note that for any two points $x',y'\in S_R$ we have that $d_{S_R}(x',y')=d_X(x',y')$ if $d_X(x',y')\le\epsilon^{\,0.1}R$. By Lemma \ref{l:prop.varphi} (iii), the points $u_i$ and $w_i$, $i=1,\dots,N$ can be chosen so that $\frac12\epsilon^{\,0.1}R \ge d_X(w_{i-1}, w_i)=d_{S_R}(w_{i-1}, w_i)\ge \frac14\epsilon^{\,0.1}R$. Thus for this partition we have that $N\le \frac{10R}{\epsilon^{\,0.1}R}=10\epsilon^{-0.1}$.

Now let $\tilde \varphi$ be the inducting function on $C(\Sigma)$ defined in the same way as $\varphi$. By the cone metric, we have
\begin{align}
  d_{C(\Sigma)}\left(\tilde \varphi^R_{s_{i-1}R}(w_{i-1}), \tilde \varphi^R_{s_iR}(w_i)\right)=\sqrt{s_{i-1}s_i\, d_{S_R}^2(w_{i-1},w_i)+(s_{i-1}-s_i)^2R^2}.
\end{align}
By Lemma \ref{l:prop.varphi} (iii), we have
\begin{align}
  \left|\,d_X\left(\varphi^R_{s_{i-1}R}(w_{i-1}), \varphi^R_{s_iR}(w_i)\right)
  -\sqrt{s_{i-1}s_i\, d_X^2(w_{i-1},w_i)+(s_{i-1}-s_i)^2R^2}\,\right|<10\epsilon^{\,0.3}R.
\end{align}
Therefore,
\begin{align}
  d_X(x,y)&=\sum_i d_X(u_{i-1}, u_i)
  \notag \\
  &\ge\sum_{i=1}^N \left(d_X\left(\varphi^R_{s_{i-1}R}(w_{i-1}), \varphi^R_{s_iR}(w_i)\right)-2\epsilon^{\,0.4}R\right)
  \notag \\
  &\ge -12N\cdot\epsilon^{\,0.3}R+\sum_{i=1}^N \sqrt{s_{i-1}s_i\, d_X^2(w_{i-1},w_i)+(s_{i-1}-s_i)^2R^2}
  \notag \\
  &\ge -120\epsilon^{\,0.2}R+\sum_{i=1}^N \sqrt{s_{i-1}s_i\, d_{S_R}^2(w_{i-1},w_i)+(s_{i-1}-s_i)^2R^2}
  \notag \\
  &=-120\epsilon^{\,0.2}R+\sum_{i=1}^N d_{C(\Sigma)}\left(\tilde\varphi^R_{s_{i-1}R}(w_{i-1}), \tilde\varphi^R_{s_iR}(w_i)\right)
  \notag \\
  &\ge-122\epsilon^{\,0.2}R+d_{C(\Sigma)}(a,b).
  \label{l:GH-map on packing-Alex.e81}
\end{align}
The last inequality follows from the triangle inequality since $w_1$ and $w_N$ can be chosen so that $d(a,\tilde\varphi^R_{s_1R}(w_1))<\epsilon^{0.4}R$ and $d(b,\tilde\varphi^R_{s_NR}(w_N))<\epsilon^{0.4}R$.
Starting from a partition of $\gamma_{a,b}$ and apply the same arguments. We get
\begin{align}
  d_X(x,y)\le d_{C(\Sigma)}(a,b)
  +122\epsilon^{\,0.2}R.
  \label{l:GH-map on packing-Alex.e82}
\end{align}
Combining (\ref{l:GH-map on packing-Alex.e81}), (\ref{l:GH-map on packing-Alex.e82}) and the definition of $f$, we get the desired result.
\end{proof}

Now let us finish the proof of Lemma \ref{l:prop.varphi}:

\begin{proof}[Proof of Lemma \ref{l:prop.varphi}]
  (i) By (\ref{l:GH-map on packing-Alex.e11}), for any $\lambda\in[r/R, 1)\supseteq[1/2,1)$ we have that
  \begin{align}
    \text{$\iota_{\lambda^{-1}}\circ\varphi^{\,R}_{\lambda R}|_{S_\rho}\colon S_\rho\to (\lambda^{-1})S_{\lambda \rho}$ is a $24\epsilon R$-isometry for any $\rho\in(0,R]$.}
    \label{l:GH-map on packing-Alex.e77}
  \end{align}
  This in particular proves (i) for $t\in[r/R, 1)$. For the case $t\ll r/R$, we need to inductively apply $\varphi^{\,R}_{R/2}$.

For any $t\in[\epsilon^{\,0.5}, 1)$, there is an integer $K=K(t)\le\epsilon^{-0.1}$ such that $2^{-(K+1)}\le t< 2^{-K}$. Since in $X$ geodesics do not bifurcate and in the definition of induced packing, the choices of geodesics are a priori fixed in terms of $p$ and $R$, we can write
    \begin{align}
    \varphi^{\,R}_{tR}=\varphi^{\,R}_{2^K tR}\circ \underset{K}{\underbrace{\varphi^{\,R}_{R/2}\circ \dots\circ\varphi^{\,R}_{R/2}}}.
        \label{l:GH-map on packing-Alex.e61}
  \end{align}
  Note that $2^Kt\in[1/2, 1)\subseteq[r/R,1)$. Thus (\ref{l:GH-map on packing-Alex.e77}) applies to $\rho=R$ and $\lambda=2^Kt$. Combining (\ref{l:GH-map on packing-Alex.e77}) and (\ref{l:GH-map on packing-Alex.e61}), we get that $\iota_{t^{-1}}\circ\varphi_{tR}^R$ is $24(K+1)\epsilon R$-onto. Then the result follows since $K\le\epsilon^{-0.1}$.

  (ii) We first show that (\ref{l:GH-map on packing-Alex.e62}) is true for $t_1=t_2=t$. Fix $t\in[\epsilon^{\,0.5}, 1)$. Let $x_0=x$, $x_{i}=\varphi^{\,R}_{R/2}(x_{i-1})$ for $1\le i\le K$ and $x_{K+1}=\varphi^{\,R}_{2^K tR}(x_K)=\varphi^{\,R}_{tR}(x)$, where $K=K(t)\le\epsilon^{-0.1}$ is defined as in (i). The sequence $\{y_i\}$ is defined similarly in terms of $y$. By (\ref{l:GH-map on packing-Alex.e77}) and because $2^Kt\in[1/2, 1)$, for $1\le i\le K$ we have
  \begin{align}
    \left|\frac12d(x_{i}, y_{i})-d(x_{i-1}, y_{i-1})\right|\le24\epsilon R
    \label{l:GH-map on packing-Alex.e15}
  \end{align}
  and
    \begin{align}
    \Big|2^Kt\cdot d(x_{K+1}, y_{K+1})-d(x_K, y_K)\Big|\le24\epsilon R.
    \label{l:GH-map on packing-Alex.e16}
  \end{align}
  Note that $d(p, x_{i-1})=d(p, y_{i-1})=\frac12 d(p, x_i)=\frac12 d(p, y_i)$ and $d(p, x_K)=d(p, y_K)=2^Kt\cdot d(p, x_{K+1})=2^Kt\cdot d(p, y_{K+1})$. By law of cosine, we have
  \begin{align}
    \left|\cos\tang{p}{x_i}{y_i}-\cos\tang{p}{x_{i-1}}{y_{i-1}}\right|
    &=\frac12 \left|\left(\frac{d(x_i,\, y_i)}{d(p,x_i)}\right)^2-\left(\frac{d(x_{i-1}, \, y_{i-1})}{d(p,x_{i-1})}\right)^2\right|    \notag\\
    &\le \frac{50\epsilon R}{d(p,x_i)}
    \le \frac{50\epsilon R}{tR}
    \notag\\
    &\le 50\epsilon^{\,0.5}.
    \label{l:GH-map on packing-Alex.e63}
  \end{align}
 Summing up (\ref{l:GH-map on packing-Alex.e63}) for $i=1,2,\dots,K+1$, we get
   \begin{align}
    \left|\cos\tang{p}{x}{y}-\cos\tang{p}{\varphi_{tR}^R(x)}{\varphi_{tR}^R(y)}\right|
    &\le 50(K+1)\epsilon^{\,0.5}\le\epsilon^{\,0.4}.
    \label{l:GH-map on packing-Alex.e64}
  \end{align}
Suppose $t_1\le t_2$. By Topnogov comparison, we have
\begin{align}
  \tang{p}{x}{y}\le\tang{p}{\varphi_{t_1R}^R(x)}{\varphi_{t_2R}^R(y)}\le\tang{p}{\varphi_{t_1R}^R(x)}{\varphi_{t_1R}^R(y)}.
  \label{l:GH-map on packing-Alex.e65}
\end{align}
Then (\ref{l:GH-map on packing-Alex.e62}) follows from (\ref{l:GH-map on packing-Alex.e64}) and (\ref{l:GH-map on packing-Alex.e65}).

The statement (iii) is a direct consequence of (ii).
\end{proof}

Lemma \ref{t:GH-map on packing-Alex} implies that when passing an $\epsilon$-bad scale, either the packing number, or the rescaled distance distortion is increased by at least a definite amount, depending on $\epsilon$.

\begin{corollary}\label{c:packing-GH-bad scale}
  For any $\epsilon>0$, there is $\delta=\delta(\epsilon)>0$ such that the following holds. Let $(X,p)\in\Alex^n(-\delta)$ be an Alexandrov space with $r_{\beta_{\,(k)}}$, $r_{\beta_{\,(k+1)}}$ $\epsilon$-bad scales of $p$. Let $\{x_i\}$ be a maximal $\delta$-packing of $B_{r_{\beta_{\,(k)}}}(p)$ and $\{y_i\}$ be the induced subpacking in $B_{r_{\beta_{\,(k+1)}+1}}(p)$. Then one of the following holds:
  \begin{enumerate}
    \renewcommand{\labelenumi}{(\roman{enumi})}
    \item $P_{\delta}(p,r_{\beta_{\,(k+1)}+1})\ge P_{\delta}(p,r_{\beta_{\,(k)}})+1$;
    \item there exist $i\neq j$ such that $r_{\beta_{\,(k+1)}+1}^{-1} d(y_i,y_{j})> r_{\beta_{\,(k)}}^{-1} d(x_i,x_{j})+\delta$.
\end{enumerate}

\end{corollary}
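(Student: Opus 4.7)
The plan is to argue by contradiction using Lemma \ref{t:GH-map on packing-Alex}. Assume toward a contradiction that neither (i) nor (ii) holds. Set $R = r_{\beta_{\,(k)}}$ and $r = r_{\beta_{\,(k+1)}+1}$. By the recursion defining bad scales, $\beta_{\,(k+1)} \ge \beta_{\,(k)}+1$, so $r \le R/4$, which meets the scale requirement of Lemma \ref{t:GH-map on packing-Alex} (after a trivial rescaling to bring $R/2$ below the universal constant). The strategy is to use the two failed conditions to verify the two hypotheses of Lemma \ref{t:GH-map on packing-Alex} with parameter $\delta$ in place of its $\epsilon$, deduce $T_p^{\delta^{\,0.1}}(r,R) = 0$, and then contradict the definition of bad scales by choosing $\delta = \delta(\epsilon)$ small enough that $\delta^{\,0.1} \le \epsilon$.

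Hypothesis (ii) of Lemma \ref{t:GH-map on packing-Alex} is exactly the negation of condition (ii) of the corollary. For hypothesis (i) of the lemma, the negation of condition (i) of the corollary gives $P_\delta(p, r) \le P_\delta(p, R) = N$; for the reverse inequality one observes that $\{y_i\}$ is an $N$-point subpacking of $B_r(p)$ which, by Proposition \ref{p:packings_basic_properties_alex}(i), is at least a $c\delta$-subpacking with $c \ge 1 - R^2$. Choosing the curvature parameter $\delta = \delta(\epsilon)$ small enough that $(X,p) \in \Alex^n(-\delta)$ behaves at the scales $r \le R \le 1$ essentially like $\Alex^n(0)$, the Toponogov constant $c$ is made arbitrarily close to $1$ and the discrepancy is absorbed into a harmless relabeling of $\delta$, so that $\{y_i\}$ is a genuine $\delta$-subpacking. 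Hence $P_\delta(p, r) \ge N$ and together with the previous bound this gives the equality in hypothesis (i) of the lemma.

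Lemma \ref{t:GH-map on packing-Alex} now yields a metric cone $C(\Sigma)$ for which $T_p^{\delta^{\,0.1}}(r, R) = 0$, and the monotonicity of $T_p^{\,\cdot\,}(r, R)$ in its superscript together with $\delta^{\,0.1} \le \epsilon$ promotes this to $T_p^\epsilon(r, R) = 0$. On the other hand, Definition \ref{d:bad scale} forces $T_p^\epsilon(r_{\beta_{\,(k+1)}+1}, r_{\beta_{\,(k)}}) = 1$: in the first branch of the recursion, $T_p^\epsilon(r_{\beta_{\,(k)}+1}, r_{\beta_{\,(k)}}) = 1$ and the contrapositive principle $[r_{\beta_{\,(k)}+1}, R] \subseteq [r, R]$ noted below Definition \ref{d:bad scale} transfers the value $1$ to the larger interval $[r, R]$; in the second branch it is tautologically the defining condition with $\alpha = \beta_{\,(k+1)}$. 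This contradicts the previous sentence and completes the proof. The main obstacle is the quantitative bookkeeping needed to upgrade the induced subpacking from merely $c\delta$-dense to genuinely $\delta$-dense; this is resolved by choosing the curvature bound $\delta$ small relative to $\epsilon$, so that the Toponogov comparison constants can be made uniformly close to $1$ at the relevant scales and absorbed into the final choice of $\delta(\epsilon)$.
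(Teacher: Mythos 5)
Your proposal is correct and follows essentially the same route as the paper: the paper's proof likewise observes that Definition \ref{d:bad scale} forces $T_p^\epsilon(r_{\beta_{\,(k+1)}+1}, r_{\beta_{\,(k)}})=1$ and then applies Lemma \ref{t:GH-map on packing-Alex} contrapositively with $\delta=\epsilon^{10}$ (i.e. $\delta^{\,0.1}=\epsilon$). Your extra bookkeeping about the Toponogov constant $c$ and the rescaling to meet the lemma's scale restriction is exactly the kind of modification the paper itself suppresses (it carries out its arguments in $\Alex^n(0)$ and declares the general case similar), so it does not constitute a different argument.
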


\begin{proof}
  By the definition of bad scales, we have that $T_p^\epsilon(r_{\beta_{\,(k+1)}+1}, r_{\beta_{\,(k)}})=1$. Then the result follows from Lemma \ref{t:GH-map on packing-Alex} with $\delta=\epsilon^{10}$.
\end{proof}

Now we give a proof of Lemma \ref{l:finite_bad_radii} using the above monotone property.

\begin{proof}[Proof of Lemma \ref{l:finite_bad_radii}] We will only prove for $X\in\Alex^n(0)$ to keep notation simple, the general case is similar. Let $r_\alpha=2^{-\alpha}$, $\alpha\in \dN$ and $K>J\ge 0$ be integers. Let $N_J=P_{\delta}(p,r_J)$ be the maximum $\delta$-packing number of $B_{r_J}(p)$. Let $\mathds I=\{\,\beta_{\,(k)}\in\dN: r_{\beta_{\,(k)}}\in\Bad^\epsilon(p)\cap [r_K,\, r_J]\}$. We claim that if $|\mathds I|>10N_J^2\,\delta^{-1}$, then $P_{\delta}(p,r_K)\ge P_{\delta}(p,r_J)+1$. If this is not true, then $P_{\delta}(p,r_K)=P_{\delta}(p,r_\alpha)=P_{\delta}(p,r_J)$ for every $\alpha\in[J, K]\cap\dZ$. 
Let $\{x_i\}_{i=1}^{N_J}$ be a maximal $\delta$-packing of $B_{r_J}(p)$ and $\{x_i^{\,\alpha}\}$ be the induced subpacking in $B_{r_\alpha}(p)$. By Corollary \ref{c:packing-GH-bad scale}, for every $\beta_{\,(k)}\in\mathds I$, there exist $i$ and $j$, depending on $\beta_{\,(k)}$, such that
  \begin{align}
    r_{\,\beta_{\,(k+1)}+1}^{-1} d(x_i^{\,\beta_{(k+1)}+1},x_j^{\,\beta_{(k+1)}+1}
    )> r_{\,\beta_{(k)}}^{-1} d(x_i^{\,\beta_{(k)}},x_j^{\,\beta_{(k)}})+\delta.
    \label{l:finite_bad_radii.e1}
  \end{align}

  Given a pair of indices $(i,j)$, let $\mathds I_{(i,j)}$ be the collection of $\beta_{\,(k)}\in \mathds I$ such that (\ref{l:finite_bad_radii.e1}) holds. Because $|\,\mathds I\,|>10N_J^2\,\delta^{-1}$, there exist $1\le i_0,j_0\le N_J$, such that $|\,\mathds I_{(i_0,j_0)}\,|> 10\,\delta^{-1}$. Furthermore, there is a subset $\mathds J_{(i_0,j_0)}\subseteq \mathds I_{(i_0,j_0)}$ with $|\mathds J_{(i_0,j_0)}|> 5\,\delta^{-1}$, so that the intervals $\{(\beta_{\,(k)},\, \beta_{\,(k+1)}+1): \,\beta_{\,(k)}\in\mathds J_{(i_0,j_0)}\}$ are disjoint. Note that by the monotonic property, we have
    \begin{align}
    r_{\,\alpha_1}^{-1} d(x_i^{\,\alpha_1},x_j^{\,\alpha_1})\ge r_{\,\alpha_2}^{-1} d(x_i^{\,\alpha_2},x_j^{\,\alpha_2}),
    \label{l:finite_bad_radii.e4}
  \end{align}
  for every $\alpha_1\ge\alpha_2$.
  Summing up (\ref{l:finite_bad_radii.e1}) for $\beta_{\,(k)}\in\mathds J_{(i_0,j_0)}$ and taking in account (\ref{l:finite_bad_radii.e4}),  we get
  \begin{align}
    2\ge r_K^{-1}\,d(x_i^K, x_j^K)> r_J^{-1}d(x_i^J, x_j^J)+\sum_{\beta_{\,(k)}\in \mathds J_{(i_0,j_0)}}\delta\ge 5,
    \label{l:finite_bad_radii.e2}
  \end{align}
  a contradiction.

  Note now that for every $r>0$ we have that $P_{\delta}(p,r)\le C(n,\delta)$. Thus it follows from the above claim that
  \begin{align}
    |\Bad^\epsilon(p)|\le (C(n,\delta)+1)\,(10\,C(n,\delta)^2\delta^{-1}+1).
    \label{l:finite_bad_radii.e3}
  \end{align}
\end{proof}


For any $\lambda\in(0,1/4)$, it follows from Lemma \ref{l:finite_bad_radii} that for any $x\in X$, there is at least one of the intervals
$$[\lambda^{2\Bad^\epsilon(x)+1},\lambda^{2\Bad^\epsilon(x)}],\, \dots,\, [\lambda^5,\lambda^4], \, [\lambda^3,\lambda^2], \, [\lambda,1]$$
containing no $\epsilon$-bad scale. Thus we have


\begin{lemma}\label{l:uniform_good_scale}
  For any $n\in\dN$, $1/4>\lambda>0$ and $\delta>0$, there exists $\eta=\eta(n,\delta,\lambda)>0$ such that for any $x\in X\in\Alex^n(-1)$ and any $0<R\le 1$, there exists $r_x\ge \eta R$, such that $\Bad^\delta(x)\cap [\lambda r_x, r_x]=\varnothing$ and thus $T^\delta_x(\lambda r_x, r_x)=0$.
\end{lemma}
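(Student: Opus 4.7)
The plan is to combine the finiteness of bad scales (Lemma \ref{l:finite_bad_radii}) with a pigeonhole argument, and then invoke Proposition \ref{p:interval no bad} to upgrade the absence of bad scales into a genuine uniform symmetry statement $T_x^\delta(\lambda r_x, r_x)=0$.

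First, by Lemma \ref{l:finite_bad_radii} there exists $N = N(n,\delta)\in\mathds N$, independent of $x$ and $X$, such that $|\Bad^\delta(x)| < N$ for every $x\in X\in\Alex^n(-1)$. Next, given $R\le 1$ and $0<\lambda<1/4$, consider the $N+1$ closed intervals
\begin{align*}
I_k \equiv [\lambda^{2k+1}R,\,\lambda^{2k}R], \qquad k=0,1,\dots,N.
\end{align*}
Because $\lambda<1/4<1/2$, the intervals $I_k$ are pairwise disjoint (in fact $\lambda^{2k+1}R > \lambda^{2(k+1)}R$). Since $|\Bad^\delta(x)\cap(0,R]| \le |\Bad^\delta(x)| < N$, the pigeonhole principle forces at least one index $k_0\in\{0,1,\dots,N\}$ for which $I_{k_0}\cap\Bad^\delta(x) = \varnothing$.

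Now set $r_x \equiv \lambda^{2k_0}R$, so that $[\lambda r_x, r_x] = I_{k_0}$ is disjoint from $\Bad^\delta(x)$. Since $k_0\le N$, we have $r_x \ge \lambda^{2N}R$, so we may take $\eta \equiv \lambda^{2N(n,\delta)}$. The hypothesis $\lambda<1/4$ gives $\lambda r_x < r_x/2 < r_x \le 1$, so Proposition \ref{p:interval no bad} applies with $r=\lambda r_x$ and $R=r_x$ to yield $T_x^\delta(\lambda r_x, r_x) = 0$, as desired.

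There is essentially no obstacle in this argument beyond packaging, since all the work has already been done in Lemma \ref{l:finite_bad_radii} and Proposition \ref{p:interval no bad}; the only thing to check is that the gap factor $\lambda<1/4$ satisfies the hypothesis $\lambda r_x < r_x/2$ required by Proposition \ref{p:interval no bad}, which is automatic.
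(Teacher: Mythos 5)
Your proposal is correct and is essentially the paper's own argument: the paper derives the lemma by exactly the same pigeonhole over the disjoint intervals $[\lambda^{2k+1}R,\lambda^{2k}R]$, using the bound $|\Bad^\delta(x)|<N(n,\delta)$ from Lemma \ref{l:finite_bad_radii} and then Proposition \ref{p:interval no bad} (whose hypothesis $\lambda r_x<r_x/2$ you correctly verify from $\lambda<1/4$), yielding the same $\eta=\lambda^{2N}$.
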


\vspace{.5cm}

\section{Splitting Theory and Dimension Reduction}\label{s:quant_strat}


\subsection{Splitting theory}\label{subsec: Splitting theory}

In this subsection, we discuss the splitting theory in Alexandrov geometry. Proposition \ref{p:split-theory} is a key geometric property for spaces with lower sectional curvature bounds that distinguishes them from spaces with lesser geometric constraints, such as lower Ricci curvature bounds.  In words, it says that if some ball almost-splits off a Euclidean factor, then all sub-balls continue to almost-split off this factor.

\begin{proposition}\label{p:split-theory}
  For any $n,\epsilon>0$, there exist $\delta=\delta(n,\epsilon)>0$ so that the following holds for any $X\in\Alex^n(-\delta)$ and $R\in(0,1]$.
     \begin{enumerate}
  \renewcommand{\labelenumi}{(\roman{enumi})}
  \item Let $u=(u_1,\dots,u_k)\colon B_{5R}(p)\to \dR^k$ be a $(k,\delta)$-splitting map. For any $B_r\subseteq B_R(p)$ and any $\xi\in u(B_r)$, there exists a map $\phi\colon B_r\to u^{-1}(\xi)$ so that
  $$(u,\phi)\colon B_r\to \dR^k\times u^{-1}(\xi)$$
  is $\epsilon r$-isometry.
  \item If $f\colon B_{5R}(p) \to B_{5R}(z)$, where $z\in\dR^k\times Z$, is a $\delta R$-isometry, then there exists a $(k,\epsilon)$-splitting map $u\colon B_{R}(p)\to\dR^k$.
  \item If there is a $(k,\delta)$-strainer $\{(a_i, b_i)\}$ with $d(p, a_i)$, $d(p, b_i)\ge 5R$ for every $1\le i\le k$, then there exists a $(k,\epsilon)$-splitting map $u\colon B_{R}(p)\to\dR^k$.
  \end{enumerate}
\end{proposition}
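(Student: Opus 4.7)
The three statements form the backbone of the Alexandrov splitting theory. I will prove them in the order (ii), (iii), (i). Parts (ii) and (iii) produce an analytic splitting map from geometric input (a GH-isometry or a strainer), while (i) is the converse and the main technical step: extracting a geometric splitting from the analytic data.

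For (ii), I will construct $u=(u_1,\dots,u_k)$ from suitable distance-function building blocks transferred across the GH-isometry $f\colon B_{5R}(p)\to B_{5R}(z)$. The coordinate functions $h_i(y_1,\dots,y_k,w)=y_i$ on the splitting model $\dR^k\times Z$ are the natural affine candidates; using $f$ and its near-inverse, I will build approximations $u_i$ on $B_R(p)$ as renormalized distance functions $u_i(x):=d(p,b_i^L)-d(b_i^L,x)$, where $b_i^L$ is a $\delta R$-preimage of a point deep along the $i$-th factor of the model. The $\delta$-concavity of $u_i$ follows from the standard concavity estimate for distance functions on $\Alex^n(-\delta)$ at scales $R\ll L$; the almost-orthonormality $|\langle\nabla u_i,\nabla u_j\rangle-\delta_{ij}|\le\epsilon$ follows from Toponogov on $\triangle xb_i^Lb_j^L$; and condition (ii) of Definition \ref{d:splitting map} follows from the long-triangle geometry with error $O(R/L)$, which can be made arbitrarily small by choosing $L$ large (the $5R$ room in the hypothesis suffices after an iterated rescaling argument).

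For (iii), I will reduce to (ii). A $(k,\delta)$-strainer $\{(a_i,b_i)\}$ at $p$ with $d(p,a_i),d(p,b_i)\ge 5R$ forces $B_{5R}(p)$ to be $C(n)\delta R$-GH-close to a $k$-splitting ball, via the strainer-to-coordinate map $x\mapsto \tfrac12 (d(x,a_i)-d(x,b_i))_{i=1}^k$ and Toponogov. Applying (ii) with $\delta'=C(n)\delta$ then produces the required splitting map.

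For (i), I will construct $\phi$ via sequential gradient flow of the splitting functions. Let $\Phi^t_i$ denote the Perelman--Petrunin gradient flow of $-\nabla u_i$, well-defined on $B_{5R}(p)$ since $u_i$ is $\delta$-concave. For $\xi\in u(B_r)$ and $x\in B_r$, define
\begin{align*}
\phi(x) := \Phi^{s_k}_k\circ\cdots\circ\Phi^{s_1}_1(x),
\end{align*}
with times $s_i$ chosen sequentially so that the $i$-th flow brings the $i$-th coordinate to $\xi_i$. The $5R$ buffer keeps trajectories inside $B_{5R}(p)$, and almost-orthonormality ensures each flow perturbs the other $u_j$'s by $O(\delta r)$, so the composition lands $O(k\delta r)$-close to $u^{-1}(\xi)$, with a final small adjustment placing $\phi(x)$ exactly on the fiber. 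The $\epsilon r$-isometry property of $(u,\phi)$ will then reduce to the Pythagorean estimate
\begin{align*}
\bigl| d(x,y)^2 - |u(x)-u(y)|^2 - d(\phi(x),\phi(y))^2 \bigr| \le \epsilon\,r\cdot d(x,y),
\end{align*}
which I will derive by integrating condition (ii) along the geodesic from $x$ to $y$: in conjunction with $\delta$-concavity it forces each $u_i\circ\gamma$ to be almost affine, giving $u_i(y)-u_i(x)\approx\langle\uparrow_x^y,\nabla_xu_i\rangle\cdot d(x,y)$; orthonormality then yields $|u(x)-u(y)|^2\le(1+C\delta)d(x,y)^2$, with the residual vertical distance being $d(\phi(x),\phi(y))$.

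The main obstacle is the gradient-flow analysis in (i): making the Pythagorean estimate rigorous in the metric setting, where smooth Hessian calculus is unavailable, requires careful use of the Perelman--Petrunin semiconcavity theory together with iterated applications of condition (ii) to control error propagation across the $k$ composed flows.
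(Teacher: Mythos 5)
Your plan for part (i) is genuinely different from the paper's, and the difference is where the gap lies. The paper does not run a quantitative flow argument at all: it uses that splitting maps restrict to sub-balls and pass to GH-limits, argues by contradiction on a sequence of rescaled sub-balls, and thereby reduces to the rigid case of a $0$-splitting map on a space in $\Alex^n(0)$, where the gradient curves are honest unit-speed geodesics; there it proves $|T_x|=d(x,u^{-1}(\xi))$, equidistance of gradient curves, and the exact Pythagorean identity, and inducts on $k$. Your proposal is exactly the ``effective version'' which the paper explicitly says can be done ``with some extra work'' --- but that extra work is the entire content of (i), and in your write-up it is asserted rather than carried out: the almost-Pythagorean estimate and the claim that the composed flows nearly preserve distances (the quantitative analogue of the paper's statement (B)) are the theorem, not a reduction of it. Moreover one concrete step fails as stated: the Perelman--Petrunin gradient flow of $-\nabla u_i$ is not well-defined merely because $u_i$ is $\delta$-concave; gradient flow requires semiconcavity of the function being flowed, and $-u_i$ is semiconvex. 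Condition (ii) of Definition \ref{d:splitting map} gives only a one-sided, integrated control of $u_i$ along geodesics, not pointwise semiconcavity of $-u_i$, so you cannot flow ``downhill'' to reach the fiber from an arbitrary side of it. In the paper's rigid case this issue evaporates because $0$-concavity together with $|\nabla u|=1$ makes the gradient curves geodesics, which extend backwards; quantitatively you would need a different device (e.g.\ flowing from the fiber, or the paper's limiting argument).

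For (ii) and (iii) you reverse the paper's logic (the paper proves (iii) directly via the BGP strainer arguments with $u_i=d(a_i,\cdot)$ and then deduces (ii) from (iii) by pulling back a strainer through $f$); that reversal is legitimate in principle, but your quantitative claim is not: with data only on $B_{5R}(p)$ the ``strainer length'' $L$ cannot exceed roughly $5R$, so for pure distance functions the error in condition (ii) of Definition \ref{d:splitting map} on all of $B_R(p)$ is of definite size $\sim R/L\sim 1/5$, not $\le\epsilon$. An ``iterated rescaling argument'' cannot repair this, since closeness to the splitting model is not assumed at any scale larger than $5R$. The paper's own proof of (iii) deals with this by obtaining the splitting property only on a much smaller ball ($B_{\delta R}(p)$), i.e.\ by shrinking the ball relative to the strainer rather than lengthening the strainer; some such adjustment (or a compactness argument as in (i)) is needed in your approach as well.
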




The above splitting theory in Alexandrov geometry is well understood. For completeness we outline the proof.

\begin{proof}
We argue $(i)$ by contradiction. This argument can be made effective with some extra work. Note that if $X_i\in\Alex^n(\kappa)$ with $X_i\to X$ and $\delta_i\to\delta$, then the limit of $(k,\delta_i)$-splitting functions on $X_i$ is a $(k,\delta)$-splitting function on $X$. Thus passing to a limit of contradictive rescalled sub-balls, it suffices to show that if $(X,p)\in\Alex^n(0)$ and there is a $0$-splitting function $u=(u_1,\dots,u_k)\colon B_{5}(p)\to \dR^k$, then for any $\xi\in u(B_1(p))$, there exists a map $\phi\colon B_1(p)\to u^{-1}(\xi)$ so that
  $$(u,\phi)\colon B_1(p)\to \dR^k\times u^{-1}(\xi)$$
  is an isometric embedding. For such a $0$-splitting function $u$, the following hold for every $i$ and $j$:
  \begin{enumerate}
  \renewcommand{\labelenumi}{(\arabic{enumi})}
  \item $u_i$ is $0$-concave.
  \item $\langle \nabla u_i,\nabla u_j \rangle=\delta_{ij}$.
  \item For any $x, y\in B_R(p)$ and any minimizing geodesic $\gamma$ connecting $x$ and $y$, it holds that $$\langle \uparrow_x^y, \nabla_x\, u_i \rangle + \langle\uparrow_y^x, \nabla_y\, u_i\rangle=0.$$
  \end{enumerate}
We now prove the result by induction on $k$. Start with the base case $k=1$. Let $\sigma_x(t)$ be a $u$-gradient flow with $\sigma_x(0)=x$. If no confusion arises one may write $x_t=\sigma_x(t)$. Because $u$ is $0$-concave and $|\nabla u|=1$, we have $u(x_t)-u(x)=t$ and $d(u(x_t), u(x_s))=|t-s|$. In particular, $\sigma_x(t)$ is a geodesic from $x$. It's clear that the directed tangent vectors $\sigma^+(t)= \nabla_{x_t}\, u$ and $\sigma^-(t)= -\nabla_{x_t}\, u$.

Let $T_x$ be the time so that $\sigma_x(T_x)\in u^{-1}(\xi)$ and define $\phi(x)=\sigma_x(T_x)\in u^{-1}(\xi)$. We will show that
  $$(u,\phi)\mid_{B_1}\colon B_1\to \dR\times u^{-1}(\xi)$$
  is an isometric embedding. This follows from the following statements for arbitrary $\xi\in u(B_1(p))$ and $t,s\in[0,1]$.
  \begin{enumerate}
  \renewcommand{\labelenumi}{(\Alph{enumi})}
  \item $|T_x|=d(x,u^{-1}(\xi))$.
  \item For any two $u$-gradient curves $\alpha$ and $\beta$, we have $d(\alpha(t),\, \beta(t))=d(\alpha(s),\, \beta(s))$.
  \item The Pythagorean Theorem $d^2(x_t,y)= d^2(x,y)+t^2$.
  \end{enumerate}

We first prove (A). It's clear that $|T_x|\ge d(x,u^{-1}(\xi))$. Recall that if $X\in\Alex^n(0)$ and $f\colon X\to \dR$ is a $\lambda$-concave function, then
\begin{align}
  d(p,q) \cdot\left\langle \uparrow_{p}^{q}, \nabla_{p}\, f\right\rangle
  \ge f(q)-f(p)-\frac\lambda2\cdot d^2(p,q).
\end{align}
Thus if $u\colon X\to \dR$ is a $0$-concave function, then
\begin{align}
  d(p,q)\ge d(p,q) \cdot\left\langle \uparrow_{p}^{q}, \nabla_{p}\, u\right\rangle
  \ge u(q)-u(p).
  \label{p:split-theory.ae5}
\end{align}
Let $y\in u^{-1}(\xi)$ so that $d(x,y)=d(x,u^{-1}(\xi))$, then we have
$$d(x,y)\ge |u(x)-u(y)|=|u(\sigma_x(0))-\xi|=|u(\sigma_x(0))-u(\sigma_x(T_x))|=|T_x|.$$

To prove (B), we let ${x_t}=\alpha(t)$, ${y_t}=\beta(t)$ and $\ell(t)=d(x_t,\, y_t)$. Assume $t\ge s$. Let $\displaystyle\ell^+(t)=\lim_{\eta\to 0^+}\frac{\ell(t+\eta)-\ell(t)}{\eta}$ and $\displaystyle\ell^-(t)=\lim_{\eta\to 0^+}\frac{\ell(t-\eta)-\ell(t)}{\eta}$ be the one-sided derivatives.
  By the first variation formula and because $u$ is $0$-concave, we have
  \begin{align}
    \ell^+(t)
    &\le -\langle \uparrow_{x_t}^{y_t}, \nabla_{x_t}\, u \rangle - \langle\uparrow_{y_t}^{x_t}, \nabla_{y_t}\, u\rangle
\le -\frac{u({y_t})-u({x_t})}{\ell(t)} -\frac{u({x_t})-u({y_t})}{\ell(t)}=0.
    \label{pf:split-theory.e7}
  \end{align}
  Thus we get $\ell(t)\le \ell(s)$. Since $\langle \uparrow_{x_t}^{y_t}, \alpha^+(t) \rangle + \langle\uparrow_{y_t}^{x_t}, \beta^+(t)\rangle=\langle \uparrow_{x_t}^{y_t}, \nabla_{x_t}\, u \rangle + \langle\uparrow_{y_t}^{x_t}, \nabla_{y_t}\, u\rangle\le 0$, we have
    \begin{align}
    \ell^-(t)
    &\le -\langle \uparrow_{x_t}^{y_t}, \alpha^-(t) \rangle - \langle\uparrow_{y_t}^{x_t}, \beta^-(t)\rangle
=\langle \uparrow_{x_t}^{y_t}, \alpha^+(t) \rangle + \langle\uparrow_{y_t}^{x_t}, \beta^+(t)\rangle
    \le 0.
    \label{pf:split-theory.e8}
  \end{align}
  Thus $\ell(t)\ge\ell(s)$.

Now we prove (C). By Toponogov comparison and (\ref{p:split-theory.ae5}), we get that
\begin{align}
  d^2(x_t, y)
  &\le d^2(x,y)+t^2-2t\cdot d(x,y)\cdot\left\langle \uparrow_{x}^{y}, \nabla_{x}\, u\right\rangle
  \notag \\
  &\le d^2(x,y)+t^2-2t\cdot(u(y)-u(x)).
  \label{p:split-theory.ae1}
\end{align}
Start with $x,y\in u^{-1}(\xi)$. Fix $y$ and flow $x$ by time $t$. By (\ref{p:split-theory.ae1}), we get
\begin{align}
  d^2(x_t,y)\le d^2(x,y)+t^2.
  \label{p:split-theory.ae2}
\end{align}
Fix $x_t$ and flow $y$ by time $t$. That is, in (\ref{p:split-theory.ae1}), substitute $y$ by $x_t$, $x$ by $y$ and $x_t$ by $y_t$. We get
\begin{align}
  d^2(y_t,x_t)&\le d^2(y, x_t)+t^2-2t\cdot (u(x_t)-u(y))
  \notag\\
  &\le d^2(x_t,y)+t^2-2t\cdot (u(x_t)-u(x))
  \notag\\
  &= d^2(x_t,y)-t^2.
  \label{p:split-theory.ae3}
\end{align}
Combine (\ref{p:split-theory.ae2}) and (\ref{p:split-theory.ae3}). We have
$$d^2(x_t,y_t)\le d^2(x_t,y)-t^2\le d^2(x,y).$$
By (B), we have $d(x_t,y_t)=d(x,y)$. Thus the Pythagorean Theorem $d^2(x_t,y)= d^2(x,y)+t^2$ follows.

Suppose that the statement has been proved for $k$. Apply the previous argument on the $0$-splitting function $u_{k+1}\colon B_5(p)\to\dR$, we have that $B_1(p)$ is isometric to a ball in $Z\times \dR\in\Alex^n(0)$, and it splits off $\dR^1$ along the direction $\nabla u_{k+1}$. Note that $Z\times \dR\in\Alex^n(0)$ if and only if $Z\in\Alex^{n-1}(0)$. Thus restricted on $Z\times\{0\}\in\Alex^{n-1}(0)$, the map $(u_1,\dots,u_k)$ is $(k,0)$-splitting. Then the result follows from the inductive hypothesis on $k$.

  Assertion (ii) is a consequence of (iii). The proof of (iii) is standard, for instance if $u_i(x)=d(a_i,x)$, then by the arguments used in Sections 5.6 -- 5.7 in \cite{BGP} we have that $u=(u_1,\dots,u_k)$ is a $(k,100\delta)$-splitting function on $B_{\delta R}(p)$, if $\delta=\delta(n)$ is chosen sufficiently small.
\end{proof}

%


The following statement is an easy consequence of Proposition \ref{p:split-theory}.
\begin{corollary}\label{c:sp in all scale}
  For any $n, k\in\dN$ and $\epsilon>0$, there exists $\delta=\delta(n,\epsilon)>0$ so that if $X\in\Alex^n(-\delta)$ and $B_5(p)$ is $(k,\delta)$-splitting, then $B_r(x)$ is $(k, \epsilon)$-splitting for every $x\in B_1(p)$ and every $r\in(0, 1]$.
\end{corollary}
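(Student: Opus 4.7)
The plan is to combine parts (i) and (ii) of Proposition \ref{p:split-theory} with a standard restriction lemma for GH approximations, splitting the argument into two scale regimes. First I fix parameters: given $\epsilon>0$, apply Proposition \ref{p:split-theory}(i) with target $\epsilon$ to obtain a threshold $\eta_1=\eta_1(n,\epsilon)>0$, then apply Proposition \ref{p:split-theory}(ii) with target $\eta_1$ to obtain a threshold $\eta_2=\eta_2(n,\epsilon)>0$. The final $\delta=\delta(n,\epsilon)$ will be chosen small enough to make every bound below valid simultaneously (in particular, small enough that the hypothesis $X\in\Alex^n(-\delta)$ is enough curvature regularity for (i) and (ii)).

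The hypothesis produces a $C(n)\delta$-GH approximation $f\colon B_5(p)\to B_5(\tilde p)$ with $\tilde p\in\dR^k\times Z$. A standard distance/onto estimate then gives the restriction fact that for any $x\in B_1(p)$ and any $\rho\in[10\delta,2]$, one has $d_{GH}(B_\rho(x),B_\rho(f(x)))\le C_1\delta$; equivalently, $B_\rho(x)$ is $(k,C_1\delta/\rho)$-splitting. Set $r_0:=2/25$.

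For the large-scale regime $r\in[r_0,1]$, apply the restriction with $\rho=r$: the ball $B_r(x)$ is $(k,C_1\delta/r_0)$-splitting, which is at most $(k,\epsilon)$-splitting provided $\delta\le\epsilon r_0/C_1$. For the small-scale regime $r\in(0,r_0]$, apply the restriction with $\rho=2$, so $B_2(x)$ is $(k,C_1\delta/2)$-splitting, i.e., within GH distance $C_1\delta$ of a ball in $\dR^k\times Z$. Taking $\delta$ small enough that $C_1\delta\le\eta_2\cdot(2/5)$, Proposition \ref{p:split-theory}(ii) applied with $5R'=2$ produces a $(k,\eta_1)$-splitting map $u\colon B_{2/5}(x)\to\dR^k$. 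Finally, Proposition \ref{p:split-theory}(i) applied to $u$ with $5R''=2/5$, so $R''=r_0$, yields for every sub-ball $B_r(x)\subseteq B_{r_0}(x)$ a map $\phi$ such that $(u,\phi)\colon B_r(x)\to\dR^k\times u^{-1}(\xi)$ is an $\epsilon r$-isometry; hence $B_r(x)$ is $(k,\epsilon)$-splitting.

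Taking $\delta$ to be the minimum of the bounds arising in the two regimes gives $\delta=\delta(n,\epsilon)$ and closes the argument. The genuine obstacle is the factor-of-$5$ loss of radius built into both (i) and (ii), which prevents a single application of either from covering all $r\in(0,1]$ at all $x\in B_1(p)$; the two-regime split above is precisely tailored to overcome this, with the restriction fact serving as the bridge between the two regimes.
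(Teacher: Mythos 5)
Your argument is correct and is essentially the route the paper intends: Corollary \ref{c:sp in all scale} is stated there as an immediate consequence of Proposition \ref{p:split-theory}, namely passing from the splitting ball to a splitting map via (ii) and back to splitting of all sub-balls via (i), with the direct restriction of the GH approximation covering the scales lost to the factor-of-$5$ radius drop, exactly as in your two-regime split. (The only cosmetic point is the usual $\epsilon$-isometry versus $\epsilon$-GH-distance constant, which is absorbed by running (i) with target $\epsilon/2$.)
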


\vspace{.3cm}

\subsection{Strong and weak singularity}\label{subsection:Strong and weak singularity}

In this Subsection we discuss the relations between the strong and weak quantitative singular sets. In fact, they are equivalent in some sense for Alexandrov spaces.

Define weak singular sets
\begin{align}
  \widetilde\cS^{\,k}_{\epsilon,\,r}(X)
  =\{x\in X: B_s(x)\text{ is not $(k+1,\epsilon)$-splitting for every $s\in(r,1]$} \}.
  \label{d:WS}
\end{align}
It's clear that $\widetilde\cS^{\,k}_{\epsilon,\,r}(X)\subseteq\cS^k_{\epsilon,\,r}(X)$.

By Corollary \ref{c:sp in all scale}, we have
\begin{proposition} \label{p:sing.equv}
   For any $n,\epsilon>0$, there exists $\delta(n,\epsilon)>0$ such that for any $X\in\Alex^n(-\delta)$ and $0<r\le 1$, we have
    \begin{align}
     \widetilde\cS^{\,k}_{\epsilon,\,r}(X)\subseteq\cS^k_{\epsilon,\,r}(X) \subseteq  \widetilde\cS^{\,k}_{\delta, r}(X).
   \end{align}
 \end{proposition}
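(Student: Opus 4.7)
The inclusion $\widetilde\cS^{\,k}_{\epsilon, r}(X) \subseteq \cS^k_{\epsilon, r}(X)$ is the one flagged as clear immediately after \eqref{d:WS}; it follows directly from the definitions (together with a routine GH-continuity argument at the boundary scale $s = r$).  My plan focuses on the opposite inclusion $\cS^k_{\epsilon, r}(X) \subseteq \widetilde\cS^{\,k}_{\delta, r}(X)$, which I would prove by contrapositive: if $x \notin \widetilde\cS^{\,k}_{\delta, r}(X)$, meaning $B_s(x)$ is $(k+1, \delta)$-splitting for some $s \in (r, 1]$, I want to conclude that $B_r(x)$ is $(k+1, \epsilon)$-splitting, whence $x \notin \cS^k_{\epsilon, r}(X)$.

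The central tool is Corollary \ref{c:sp in all scale}, which propagates splitting from a scale-$5$ ball down to every sub-ball of radius at most $1$.  First I would rescale the metric by the factor $5/s$, so that $B_s(x)$ becomes the scale-$5$ ball $B_5(x)$ in the rescaled space; since $s \le 1$, the lower curvature bound $-\delta$ is preserved under the zoom-in, so the rescaled space still lies in $\Alex^n(-\delta)$.  With $\delta = \delta(n, \epsilon)$ chosen via Corollary \ref{c:sp in all scale}, every sub-ball $B_t(x)$ with $t \in (0, 1]$ in the rescaled metric is $(k+1, \epsilon)$-splitting.  When $r \le s/5$, setting $t = 5r/s$ and undoing the rescaling delivers that $B_r(x)$ is $(k+1, \epsilon)$-splitting, as desired.

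The narrow range $s/5 < r < s$ lies just outside the direct reach of the corollary, and I would handle it by a short restriction argument.  Here $s/r$ is bounded by $5$, so a $\delta s$-GH approximation $f \colon B_s(x) \to B_s(y)$ with $y$ the origin of a splitting space can be restricted to $B_r(x)$: after adjusting $f$ so that $d(f(x), y) \le 2\delta s$, the restriction $f|_{B_r(x)}$ is automatically approximately distance preserving; to obtain approximate surjectivity onto $B_r(y)$, I pull each target point back along the geodesic from $x$ to secure a preimage lying inside $B_r(x)$, losing only a uniform constant.  This yields $d_{GH}(B_r(x), B_r(y)) \le C \delta s \le 5 C \delta r$, and choosing $\delta \le \epsilon/(5C)$ finishes this case.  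Taking $\delta = \delta(n, \epsilon)$ small enough to serve both ranges completes the proof.

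The main, if mild, obstacle is the boundary-range restriction step: GH closeness between large balls does not pass formally to sub-balls, so approximate surjectivity must be checked by hand rather than quoted.  The constraint $s < 5r$ keeps the loss to a universal constant, which is absorbed into the final choice of $\delta$.
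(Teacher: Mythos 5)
Your overall route is the paper's: the first inclusion is simply declared clear after \eqref{d:WS}, and the paper's entire proof of the second inclusion is the citation of Corollary \ref{c:sp in all scale}, i.e.\ exactly your contrapositive-plus-rescaling step (note the rescaling by $5/s\ge 1$ only improves the curvature bound, as you say). You are right, and more careful than the paper, to notice that this only reaches radii $r\le s/5$ after rescaling, so the window $s/5<r<s$ needs a separate argument; the paper passes over this silently.

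The gap is in how you close that window. The pivotal assertion ``after adjusting $f$ so that $d(f(x),y)\le 2\delta s$'' is not justified, and with that bound it is false: a Gromov--Hausdorff approximation between metric balls carries no a priori information about where the center goes, and even for a splitting target the displacement can be of order $\sqrt{\delta}\,s$ rather than $\delta s$. For instance, with $Y=\dR\times Z$ and $Z$ two points at mutual distance $h=\sqrt{2\delta s\cdot s}$, the balls $B_s\bigl((0,z_1)\bigr)$ and $B_s\bigl((0,z_2)\bigr)$ are at Hausdorff distance about $\delta s$ from each other as subsets, so the nearest-point map is an $O(\delta s)$-approximation sending the center of one to a point at distance $h\sim\sqrt{\delta}\,s$ from the center of the other. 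What \emph{is} true, and what your argument actually needs, is a recentering lemma for splitting targets: if $f\colon B_s(x)\to B_s(y)\subseteq\dR^{k+1}\times Z$ is an $\eta$-approximation, then every point of $B_s(y)$ lies within $s+2\eta$ of $f(x)$, and applying this to the Euclidean slice $\{(v,z_y):|v-v_y|<s\}$ through $y$ forces $|v_y-v_{f(x)}|\le 2\eta$ and $d_Z(z_y,z_{f(x)})\le 2\sqrt{\eta s+\eta^2}$, hence $d(y,f(x))\le C\sqrt{\delta}\,s$ when $\eta=\delta s$. This weaker bound still suffices: running your restriction argument with error $\nu=C\sqrt{\delta}\,s$ in place of $2\delta s$, and using the geodesic pullback toward $x$ not only for approximate surjectivity but also to push images of points near $\partial B_r(x)$ back into $B_r(y)$ (this is needed because $Z$, hence $\dR^{k+1}\times Z$, need not be geodesic, so points of $B_{r+\nu}(y)\setminus B_r(y)$ need not be $\nu$-close to $B_r(y)$), gives $d_{GH}\bigl(B_r(x),B_r(y)\bigr)\le C'\sqrt{\delta}\,s\le 5C'\sqrt{\delta}\,r$, and choosing $\delta$ with $5C'\sqrt{\delta}\le\epsilon$ finishes. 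So the strategy is sound, but the recentering step must be stated with the correct $\sqrt{\delta}\,s$ bound and actually proved (it uses the $\dR^{k+1}$ factor essentially), rather than asserted.
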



The quantitative singular sets defined for the Ricci cases in \cite{CN13} is as follows.  Note that we do not use it in this paper and it may be skipped, we are presenting this for comparison sake to the Ricci curvature context.

 \begin{definition}[Quantitative symmetric]\quad

\begin{enumerate}
\item Given a metric space $Y$ and $k\in\dN$, we say that $Y$ is $k$-symmetric if $Y\equiv \dR^k\times C(\Sigma)$ for some metric space $\Sigma$.
\item Given $x\in X$ we say that $B_r(x)$ is $(k,\epsilon)$-symmetric if there exists a $k$-symmetric space $Y$ such that $d_{GH}(B_r(x),B_r(y))\le\epsilon r$, where $y\in Y$ is a cone point.
\end{enumerate}
\end{definition}

Define
\begin{align}
  \cW\cS^{\,k}_{\epsilon,\,r}(X) \equiv \{x\in X: B_s(x)\text{ is not $(k+1,\epsilon)$-symmetric, for every $s\in(r, 1]$} \}.
  \label{d:qs-ric.e1}
 \end{align}
It's clear that $\widetilde\cS^{\,k}_{\epsilon,\,r}(X)\subseteq \cW\cS^{\,k}_{\epsilon,\,r}(X)$.

The following is an easy lemma, by a standard contradiction argument.

\begin{lemma}\label{l:k+0=k_splitting}
For each $n\in\dN$ and $\epsilon>0$ there exists $\delta(n,\epsilon)>0$ such that the following holds for any metric space $(X,p)$. If $B_r(p)$ is both $(0,\delta)$-symmetric and $(k,\delta)$-splitting, then $B_r(p)$ is $(k,\epsilon)$-symmetric.
\end{lemma}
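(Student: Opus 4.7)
The plan is to argue by contradiction and pointed Gromov--Hausdorff compactness, as the statement itself hints. Suppose the lemma fails: there exist $\epsilon>0$, $\delta_i\to 0$, pointed spaces $(X_i,p_i)$, radii $r_i>0$, and some fixed $k\le n$, such that $B_{r_i}(p_i)$ is both $(0,\delta_i)$-symmetric and $(k,\delta_i)$-splitting but is not $(k,\epsilon)$-symmetric. After rescaling I take $r_i=1$, choose cones $C(\Sigma_i)$ with tip $y_i$ realizing the $(0,\delta_i)$-symmetry, and split spaces $\dR^k\times Z_i$ with base $z_i$ realizing the $(k,\delta_i)$-splitting. Using the $\Alex^n(-1)$-precompactness implicit in the context of the paper (or the uniform doubling inherited from the cone closeness), I pass to a subsequence and extract simultaneous pointed GH limits $(B_1(p_i),p_i)\to (B_1(p_\infty),p_\infty)$, $(C(\Sigma_i),y_i)\to (C(\Sigma_\infty),y_\infty)$, and $(\dR^k\times Z_i,z_i)\to(\dR^k\times Z_\infty,z_\infty)$, together with isometries $\phi\colon B_1(p_\infty)\to B_1(y_\infty)$ and $\psi\colon B_1(p_\infty)\to B_1(z_\infty)$ that send $p_\infty$ to $y_\infty$ and to $z_\infty$ respectively.

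The heart of the argument will be to show that $C(\Sigma_\infty)$ itself splits as $\dR^k\times C(\Sigma_\infty')$ in such a way that $y_\infty$ becomes the combined cone point. For each standard basis vector $e_j\in\dR^k$, the segment $t\mapsto z_\infty+(te_j,0)$ with $|t|\le 1$ is a length-$2$ geodesic through $z_\infty$ in $\overline{B_1(z_\infty)}\subset\dR^k\times Z_\infty$; transporting by $\phi\circ\psi^{-1}$ yields a length-$2$ geodesic through the cone tip $y_\infty$ in $C(\Sigma_\infty)$. Since any geodesic through the tip of a cone is a broken pair of radial rays, each such geodesic produces an antipodal pair $v_j^\pm\in\Sigma_\infty$ with $d_{\Sigma_\infty}(v_j^+,v_j^-)=\pi$. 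The mutual orthogonality of the $e_j$'s is preserved by $\phi\circ\psi^{-1}$, and via the cone distance formula this forces $d_{\Sigma_\infty}(v_i^\pm,v_j^\pm)=\pi/2$ for $i\ne j$. Thus $\Sigma_\infty\in\Alex^{n-1}(1)$ contains a $2k$-point orthonormal frame of antipodal pairs, and iterating the standard suspension/rigidity result for $\Alex(1)$ spaces containing a diameter-$\pi$ pair I conclude $\Sigma_\infty=\dS^{k-1}*\Sigma_\infty'$, equivalently $C(\Sigma_\infty)=\dR^k\times C(\Sigma_\infty')$, with $y_\infty$ corresponding to $(0,\text{tip})$.

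Combining these steps, $B_1(p_\infty)\cong B_1(y_\infty)$ is the unit ball centered at a cone point of the $(k+1)$-symmetric space $\dR^k\times C(\Sigma_\infty')$, so it is $(k,0)$-symmetric. By GH-stability this would force $B_1(p_i)$ to be $(k,\delta_i')$-symmetric with $\delta_i'\to 0$, contradicting the hypothesis for large $i$. I expect the main obstacle to be the spherical-join step: one must verify carefully that the $k$ mutually orthogonal antipodal pairs in $\Sigma_\infty$ assemble into a genuine Euclidean factor of the cone, and not merely into an isometrically embedded $\dS^{k-1}$. This will rely on the cone distance formula together with the rigidity half of Toponogov comparison for diameter-$\pi$ pairs in $\Alex(1)$ spaces, applied iteratively in $j=1,\dots,k$. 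Everything else is routine GH bookkeeping and tracking of base-point correspondence under the two compared reference spaces.
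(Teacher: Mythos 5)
The paper offers no actual proof here beyond the remark that the lemma is ``easy, by a standard contradiction argument,'' and your proposal is precisely that argument carried out: pass to a contradicting sequence, extract limits, observe that the limit unit ball is simultaneously the vertex ball of a cone $C(\Sigma_\infty)$ and a ball in $\dR^k\times Z_\infty$, transport the $k$ coordinate segments through the center to get $k$ mutually orthogonal geodesics through the vertex, read off orthogonal antipodal pairs in $\Sigma_\infty$, and conclude $C(\Sigma_\infty)=\dR^k\times C(\Sigma_\infty')$ by iterated suspension rigidity, contradicting the failure of $(k,\epsilon)$-symmetry for large $i$. So in route and substance you are doing what the paper intends.

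Two points need tightening. First, your parenthetical fallback that ``uniform doubling is inherited from the cone closeness'' is not valid: the cross-section $\Sigma$ in the definition of $(0,\delta)$-symmetric is an arbitrary metric space, so closeness to a cone ball gives no packing control, and for a literally arbitrary metric space the contradicting sequence need not subconverge at all. The compactness must come from the class in which the lemma is applied --- balls in $\Alex^n(-1)$ spaces, equivalently an $n$-dependent packing bound --- which is exactly where the $n$ in $\delta(n,\epsilon)$ enters; the paper's phrase ``any metric space'' has to be read with this implicit restriction, since its own one-line contradiction argument has the same limitation. Second, you assert $d_{\Sigma_\infty}(v_j^+,v_j^-)=\pi$ and $\Sigma_\infty\in\Alex^{n-1}(1)$ without justification: a geodesic through a cone vertex only forces $d_{\Sigma_\infty}(v_j^+,v_j^-)\ge\pi$ in general, and both the equality and the suspension rigidity you invoke require curvature $\ge 1$ (hence diameter $\le\pi$) for $\Sigma_\infty$. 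This is supplied by the standard dilation remark: the limit ball has curvature $\ge -1$, and since the cone dilation makes $s^{-1}B_s(y_\infty)$ isometric to $B_1(y_\infty)$ for every $s<1$, the vertex ball has curvature $\ge -s^2$ for all $s$, so $C(\Sigma_\infty)$ is nonnegatively curved and $\Sigma_\infty\in\Alex^{n-1}(1)$. With these two repairs (and the routine bookkeeping you already flag) your argument is complete and matches the intended proof.
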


\begin{proposition}
  For any $\epsilon>0$, there exist $\eta(n,\epsilon)$ and $\delta(n,\epsilon)>0$ such that for any $X\in\Alex^n(-\delta)$ and $0<r\le 1$, we have
\begin{align}
  \cW\cS^{\,k}_{\epsilon,\,\eta r}(X)\subseteq \widetilde\cS^{\,k}_{\delta,r}(X)
  \subseteq \cW\cS^{\,k}_{\delta,r}(X).
\end{align}
\end{proposition}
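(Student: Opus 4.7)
The inclusion $\widetilde\cS^{\,k}_{\delta,r}\subseteq\cW\cS^{\,k}_{\delta,r}$ is essentially definitional: every $(k{+}1)$-symmetric model $\dR^{k+1}\times C(\Sigma)$ is in particular $(k{+}1)$-splitting, so a $(k{+}1,\delta)$-symmetric ball is automatically $(k{+}1,\delta)$-splitting. Applied scale by scale and taking contrapositives, this gives the second inclusion.

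For the main inclusion $\cW\cS^{\,k}_{\epsilon,\,\eta r}\subseteq \widetilde\cS^{\,k}_{\delta,\,r}$, I would argue by contrapositive: if $x\notin \widetilde\cS^{\,k}_{\delta,\,r}$, then there exists a scale $s\in(r,1]$ at which $B_s(x)$ is $(k{+}1,\delta)$-splitting, and the task is to exhibit some $t>\eta r$ at which $B_t(x)$ is $(k{+}1,\epsilon)$-symmetric. The plan is to combine two effects at a common subscale: persistence of the splitting down to all smaller scales, given by Corollary \ref{c:sp in all scale}, and the existence of a nearby good (cone) scale, given by Lemma \ref{l:uniform_good_scale}.

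Concretely, fix auxiliary parameters $\delta_1,\delta_2>0$, to be chosen in terms of $\epsilon$ at the end. First, rescaling $B_s(x)$ to unit size and applying Corollary \ref{c:sp in all scale} with $\delta$ taken sufficiently small yields that $B_\rho(x)$ is $(k{+}1,\delta_1)$-splitting for every $\rho\in(0,s/5]$. Next, apply Lemma \ref{l:uniform_good_scale} at the point $x$ with input radius $R=s/10$, parameter $\delta_2$, and some fixed $\lambda\in(0,1/4)$; this produces a subscale $r_x\in[\eta_0\cdot s/10,\,s/10]$ with $T^{\delta_2}_x(\lambda r_x, r_x)=0$, so in particular $B_{r_x}(x)$ is $(0,\delta_2)$-close to the cone $C(\Sigma)$ provided by Definition \ref{d:bad scale}, i.e.\ $(0,\delta_2)$-symmetric. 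Since $r_x\le s/5$, the first step also gives that $B_{r_x}(x)$ is $(k{+}1,\delta_1)$-splitting. Lemma \ref{l:k+0=k_splitting} then upgrades the simultaneous $(0,\delta_2)$-symmetry and $(k{+}1,\delta_1)$-splitting of $B_{r_x}(x)$ into $(k{+}1,\epsilon)$-symmetry, provided $\delta_1,\delta_2$ are chosen small enough in terms of $\epsilon$. Setting $\eta=\eta_0/10$ gives $r_x\ge \eta s>\eta r$, so $x\notin\cW\cS^{\,k}_{\epsilon,\,\eta r}$, as desired.

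All the substance is carried by the three previously established lemmas, so the only real item is to manage the order of quantifier selection: $\epsilon$ first determines $\delta_1,\delta_2$ through Lemma \ref{l:k+0=k_splitting}; $\delta_1$ then fixes the admissible input parameter in Corollary \ref{c:sp in all scale}, and thus the hypothesis $\delta$; independently, $\delta_2$ together with the chosen $\lambda$ yields $\eta_0$ via Lemma \ref{l:uniform_good_scale}, and hence the final $\eta$. Since rescaling a ball $B_s(x)\subseteq X\in\Alex^n(-\delta)$ by a factor $\le 1$ only strengthens the curvature bound, no extra care is needed to maintain the hypothesis of the auxiliary lemmas. There is no deeper obstacle, as the sectional-curvature-specific input, namely the good-scale-persistence encoded in Corollary \ref{c:sp in all scale}, is exactly what fails in the Ricci setting and is what makes this equivalence possible at all.
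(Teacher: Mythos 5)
Your proposal is correct and follows essentially the same route as the paper: the second inclusion is the definitional fact that symmetric balls are splitting, and the first is proved by contrapositive using Corollary \ref{c:sp in all scale} to propagate the splitting to all small scales, Lemma \ref{l:uniform_good_scale} to find a definite-size $(0,\delta)$-symmetric scale, and Lemma \ref{l:k+0=k_splitting} to combine them into $(k+1,\epsilon)$-symmetry. Your explicit bookkeeping of the parameter dependencies ($\epsilon\to\delta_1,\delta_2\to\delta,\eta$) is just a more careful spelling-out of what the paper compresses into ``with appropriately selected $\delta$ and $\delta_1$.''
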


\begin{proof}
  If $x\notin\widetilde\cS^{\,k}_{\delta,r}(X)$, then $B_s(x)$ is $(k+1,\delta)$-splitting for some $s\ge r$. By Corollary \ref{c:sp in all scale}, we have that $B_t(x)$ is $(k+1,\delta_1)$-splitting for all $t\in(0,\frac15 r]$. On the other hand, by Lemma \ref{l:uniform_good_scale}, there exists $\eta(n,\delta_1)>0$ and $r_x\in[\eta r, \frac15 r]$ such that $B_{r_x}(x)$ is $(0,\delta_1)$-symmetric. Due to Lemma \ref{l:k+0=k_splitting}, with appropriately selected $\delta$ and $\delta_1$, we have that $B_{r_x}(x)$ is $(k+1,\epsilon)$-symmetric. Therefore, $x\notin \cW\cS^{\,k}_{\epsilon,\,\eta r}(X)$.
\end{proof}

\begin{remark}
Our notion of quantitative splitting for Alexandrov spaces is also equivalent to those defined using strainers. In particular, there exists $0<\delta_1(n,\epsilon)<\delta_2(n,\epsilon)$ so that
\begin{align}
\cS^{\,k}_{\epsilon,\,r/5}(X)\subseteq \{x\in X: x \text{ does not admit any } (k+1,\delta_2)\text{-strainer with size } \ge r\}\subseteq \cS^{\,k}_{\delta_1,r}(X)
\end{align}
for any $X\in\Alex^n(-\delta_1)$ and $0<r\le 1$.
\end{remark}

\begin{remark}
By a similar argument, one can show that if $X$ is a $v$-non-collapsed limit of $n$-dimensional manifolds with $\Ric\ge -1$, then there exist $\eta_i(n,\epsilon,v)>0$, $i=1,2$, such that
$$\cW\cS^{\,k}_{\epsilon,\,\eta_1 r}(X)\subseteq \widetilde\cS^{\,k}_{\eta_2,r}(X)\subseteq \cS^{\,k}_{\eta_2,r}(X).$$
However, the statement in the form $\cS^{\,k}_{\epsilon,\,\eta_1 r}(X)\subseteq \cW\cS^{\,k}_{\eta_2,\, r}(X)$ doesn't hold for the Ricci case. 
\end{remark}

\vspace{.3cm}

\subsection{Dimension reduction}

Note in a metric cone $C(\Sigma)$ that the tangent cone at any point $p\in C(\Sigma)$ away  from the cone point splits off an extra $\dR$-factor in comparison to $C(\Sigma)$.  This is the basis of Federer dimension reduction.  The following lemma is a quantitative version of this on Alexandrov spaces.

\begin{lemma}\label{l:cone split}
  For any $n, k\in\dN$ and $\epsilon>0$, there exists $\delta=\delta(n,\epsilon)$ and $\beta =\beta (n,\epsilon)>0$ such that the following holds for any $(X,p)\in\Alex^n(-\delta)$ and $(k,\delta)$-splitting function $u=(u_1,\dots,u_k)\colon B_{50}(p)\to \dR^k$.
Let $x\in B_1(p)$ and $y\in X$ with $d(x,y)=r>0$.
\begin{enumerate}
  \renewcommand{\labelenumi}{(\roman{enumi})}
  \item  If $T_x^\delta(r,2r)=0$ and $d(x,y)-d(u(x),u(y))> \epsilon r$, then $B_{s}(y)$ is $(k+1,\epsilon)$-splitting for every $0<s\le \beta  r$.
  \item If $T_x^\delta(r,2r)=0$ and $B_{s}(y)$ is not $(k+1,\epsilon)$-splitting for some $0<s\le \beta  r$, then
    \begin{align}
    \big|d(u(x),u(y))-d(x,y)\big|\le \epsilon\, d(x,y).
    \label{e:split on good scale-e2}
  \end{align}
\end{enumerate}
\end{lemma}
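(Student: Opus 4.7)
The plan is to prove (i) by a GH-compactness contradiction: the cone hypothesis at $x$ together with the $(k,\delta)$-splitting of $u$ forces, in the limit, a product-of-cones model $\dR^k \times C(\Sigma')$ on $B_{2r}(x)$, from which the extra $\dR$-splitting at $y$ is automatic. Part (ii) will then follow from (i) as a contrapositive, together with the one-sided 1-Lipschitz estimate for $u$ furnished by Proposition \ref{p:split-theory}(i).

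If (i) failed there would be sequences $\delta_i,\beta_i\to 0$, spaces $X_i\in\Alex^n(-\delta_i)$ with $(k,\delta_i)$-splitting maps $u_i\colon B_{50}(p_i)\to\dR^k$, and pairs $x_i,y_i$ of distance $r_i$ satisfying $T^{\delta_i}_{x_i}(r_i,2r_i)=0$ and $d(x_i,y_i)-d(u_i(x_i),u_i(y_i))>\epsilon r_i$, yet with $B_{s_i}(y_i)$ not $(k+1,\epsilon)$-splitting for some $s_i\le\beta_i r_i$. Normalise $r_i=1$ and pass to a GH-limit $(X_i,x_i,y_i,u_i)\to (X_\infty,x_\infty,y_\infty,u_\infty)$. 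The vanishing of $T^{\delta_i}_{x_i}(1,2)$ gives that $B_2(x_\infty)$ is isometric to the corresponding cone ball $B_2(x^*)\subset C(\Sigma)$ at the tip, while the limit of Proposition \ref{p:split-theory}(i) supplies an isometric splitting $(u_\infty,\phi)\colon B_2(x_\infty)\to \dR^k\times F$. Composing these two exact descriptions of the same ball forces $B_2(\phi(x_\infty))\subset F$ to be itself a cone ball at $\phi(x_\infty)$, so that $F$ is locally a cone $C(\Sigma')$ at its tip $*:=\phi(x_\infty)$ and $B_2(x_\infty)\cong B_2((0,*))\subset \dR^k\times C(\Sigma')$. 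Writing $y_\infty\leftrightarrow (\xi_\infty,z_\infty)$ under this identification, the Pythagorean identity $|\xi_\infty|^2+d(*,z_\infty)^2=1$ combined with the gap hypothesis yields $\rho_\infty:=d(*,z_\infty)\ge \sqrt{\epsilon}$.

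Because $z_\infty$ is not the tip, its tangent cone $T_{z_\infty}C(\Sigma')$ splits off a radial $\dR$-factor, and consequently $T_{y_\infty}X_\infty$ splits $\dR^{k+1}$ exactly. Since $s_i\le \beta_i\to 0$ we have $s_i\to 0$, so rescaling by $1/s_i$ and passing to a further diagonal GH-limit exhibits $T_{y_\infty}X_\infty$ as the limit of the pointed rescalings $(X_i,y_i,s_i^{-1}d_i)$; the non-splitting of $B_{s_i}(y_i)$ passes to this limit to produce a unit ball in $T_{y_\infty}X_\infty$ that is not $(k+1,\epsilon)$-splitting, contradicting the exact $\dR^{k+1}$-splitting established above. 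This proves (i). For (ii), projecting the $\eta r$-approximation $(u,\phi)\colon B_{2r}(x)\to\dR^k\times F$ of Proposition \ref{p:split-theory}(i) onto the $\dR^k$-factor gives $d(u(x),u(y))\le d(x,y)+O(\delta r)$, which upgrades to $d(u(x),u(y))-d(x,y)\le \epsilon d(x,y)$ once $\delta$ is small; the inequality $d(x,y)-d(u(x),u(y))\le \epsilon d(x,y)$ is exactly the contrapositive of (i). The main technical obstacle is the rigidity step of the second paragraph, where the exact cone and product descriptions must be matched to conclude that $F$ itself is a cone; this uses the hypothesis $T^\delta_x(r,2r)=0$ in an essential way, since it demands a uniform cone structure on the full interval $[r,2r]$ rather than merely an asymptotic tangent-cone statement at $x$.
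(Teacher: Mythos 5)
Your overall strategy (a compactness/contradiction argument, versus the paper's direct construction of a strainer $\{x_1,x_2\}=\{\phi(x),\phi(y')\}$ and the explicit $(k+1,\delta_3)$-splitting map $(u,d(\cdot,x_1))$ on $B_{\epsilon r/5}(y)$) is legitimate in principle, but the final step has a genuine gap. You claim that, after normalising $r_i=1$, ``rescaling by $1/s_i$ and passing to a further diagonal GH-limit exhibits $T_{y_\infty}X_\infty$ as the limit of the pointed rescalings $(X_i,y_i,s_i^{-1}d_i)$,'' and that the non-splitting of $B_{s_i}(y_i)$ therefore contradicts the $\dR^{k+1}$-splitting of $T_{y_\infty}X_\infty$. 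This is not justified: the scales $s_i\le\beta_i\to 0$ come from the contradiction hypothesis and may shrink arbitrarily fast compared with the rate of the convergence $X_i\to X_\infty$, so the convergence gives you no information about $B_{s_i}(y_i)$ at scales far below the GH-approximation error, and the blow-ups $(X_i,y_i,s_i^{-1}d_i)$ need not converge to the tangent cone of the limit space at $y_\infty$ (this is exactly the kind of statement that fails badly for lower Ricci bounds, and even in the Alexandrov setting it is not a formal consequence of GH convergence). In other words, knowing that the limit splits $\dR^{k+1}$ infinitesimally at $y_\infty$ does not, by a naive diagonal argument, control the geometry of $B_{s_i}(y_i)$ in $X_i$.

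The gap can be closed, but only by invoking the hereditary splitting property that is the special Alexandrov ingredient here, namely Proposition \ref{p:split-theory}(i) / Corollary \ref{c:sp in all scale}: from your limit picture $B_2(x_\infty)\cong B_2((0,*))\subset\dR^k\times C(\Sigma')$ with $d(*,z_\infty)\ge\sqrt{\epsilon}$, the ball $B_\rho(y_\infty)$ is $(k+1,\delta')$-splitting at a \emph{definite} scale $\rho=\rho(\epsilon)$; by GH convergence $B_\rho(y_i)$ is $(k+1,2\delta')$-splitting for large $i$, and then Corollary \ref{c:sp in all scale} pushes this down to \emph{all} scales $s\le c(n,\epsilon)\rho$, in particular to $s_i$, contradicting the choice of $s_i$ without any rescaling. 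As written, your proof never transfers the definite-scale splitting at $y_i$ down to scale $s_i$, which is precisely where the content lies (and is also what the paper's effective proof accomplishes via Proposition \ref{p:split-theory}(i) at the end). Two smaller remarks: the rigidity step you flag, that a ball at a cone tip which isometrically splits $\dR^k$ must be a ball in $\dR^k\times C(\Sigma')$ with the tip at $(0,*)$, is standard but does require an argument (lines through the tip plus cone rigidity), and your treatment of (ii) (almost $1$-Lipschitz bound for $u$ plus the contrapositive of (i)) agrees with the paper's remark that (i) and (ii) are equivalent since $|\nabla u|<1+\delta$.
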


\begin{proof}
  We only need to prove (i) since it is equivalent to (ii), taking in account that $|\nabla u|<1+\delta$. Let $\delta=\delta(n,\epsilon)$, $\delta_i=\delta_i(n,\epsilon)$ be constants with $0<\delta<\delta_1<\delta_2<\dots<\epsilon$.

    \begin{figure}[!h]
  \includegraphics[scale=1]{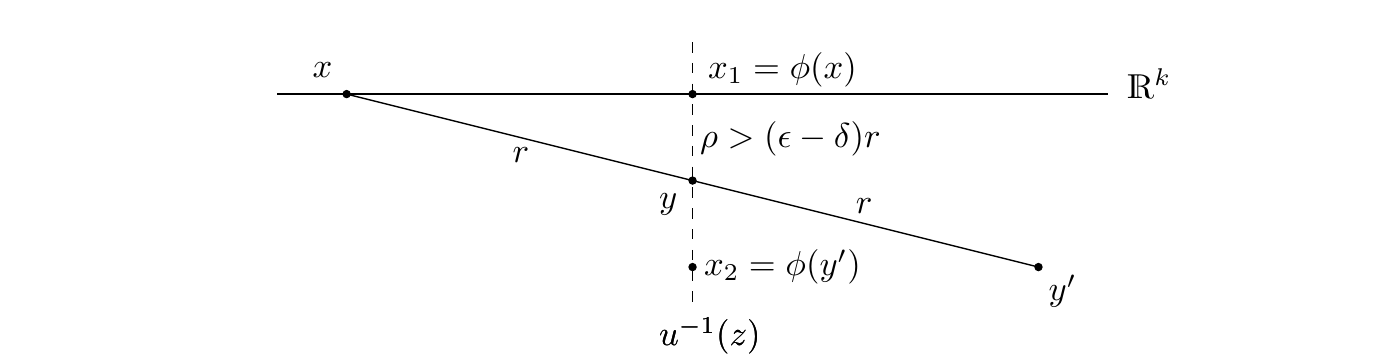}
  \caption{}
  \end{figure}

Let us take $z\equiv u(y)$. Choosing $\delta(n,\epsilon)>0$ small we have by Proposition \ref{p:split-theory} that there exists $\phi\colon B_{10r}(x)\to u^{-1}(z)$ such that $(u,\phi)\colon B_{10r}(x)\to \dR^k\times u^{-1}(z)$ is a $\delta_1 r$-isometry. Let $x_1=\phi(x)\in u^{-1}(z)$ and $\rho=d(x_1, y)$.

We first find a splitting function along the slice $u^{-1}(z)$, using that $y$ is away from the cone point $x_1$. Because $T_x^\delta(r,2r)=0$, there exists $y'\in X$ so that \begin{align}
 |d(y,y')-r|\le \delta_1 r,\qquad |d(x,y')-2r|\le \delta_1 r.
  \label{e:split on good scale-e3}
\end{align}
Let $x_2=\phi(y')\in u^{-1}(z)$. Combine (\ref{e:split on good scale-e3}), $d(x,y)=r$ and the $\delta r$-splitting structure on $B_{10r}(x)$. We have
\begin{align}\label{e:strainer point good scale}
  |d(x_2,y)-\rho|\le4\delta_1 r,
  \qquad |d(x_1,x_2)-2\rho|\le4\delta_1 r.
\end{align}
To see the maximal scales that $B_s(y)$ splits, we need a lower bound of $\rho$. Because $(u,\phi)$ is a $\delta_1 r$-isometry and by the assumptions, we have
\begin{align}
  \rho=d(x_1,y)
  &\ge d(x,y)-d(x, x_1)
  \notag \\
  &\ge d(x,y)-d\Big((u, \phi)(x),\, (u, \phi)(x_1) \Big)-\delta_1 r
  \notag \\
  &=d(x,y)-d\Big((u(x), \phi(x)),\, (u(y), \phi(x)) \Big)-\delta_1 r
  \notag \\
  &=d(x,y)-d\Big(u(x),\, u(y)\Big)-\delta_1 r
  \notag \\
  &>(\epsilon-\delta_1)r.
\end{align}
Choosing $\delta_1$ small and by \eqref{e:strainer point good scale}, we have that $\{x_1,x_2\}$ forms a $(1,\delta_2)$-strainer. Thus $u_{k+1}(q)\equiv d(q,x_1)$ is a $(1,\delta_3)$-splitting map on $B_{\frac15\epsilon r}(y)$.

By the $\delta_1$-almost splitting structure of $B_{10}(p)$ and the Toponogov comparison using \eqref{e:strainer point good scale}, we have
  \begin{align}
    |\langle\nabla_q u_i, \nabla_q u_{k+1}\rangle|<\delta_3.
  \end{align}
  for any $q\in B_{\frac15\epsilon r}(y)$ and every $i=1,2,\dots k$. Thus the function $(u, u_{k+1})|_{B_{\frac15\epsilon r}(y)}$ is a $(k+1,\delta_3)$-splitting map. By Proposition \ref{p:split-theory} (i), $B_{s}(y)$ is $\epsilon$-splitting for every $0<s\le\frac1{50}\epsilon r$.
\end{proof}

Using Lemma \ref{l:cone split}, we can prove the rectifiability.

\begin{proof}[Proof of Theorem \ref{t:rectifiable}]
  Note that $(\cS^k_\epsilon\setminus\cS^{k-1}_\delta)\cap B_1$ can be covered by countably many balls $\{B_{\delta r_i}(x_i)\}$ with $x_i\notin \cS^k_{\delta, 50 r_i}$. That is, $B_{50 r_i}(x_i)$ is $(k,\delta)$-splitting. By Proposition \ref{p:split-theory}, for each of $B_{\delta r_i}(x_i)$, there exists a $\delta_1$-splitting map $u_i\colon B_{50\delta r_i}(x_i)\to\dR^k$. Note that $\dim_{\cH}(\cS^{k-1}_\delta)\le k-1$. Thus it suffices to prove the following statement. There exists $\delta=\delta(n,\epsilon)>0$, such that if there is a $(k,\delta)$-splitting map $u\colon B_{50}(p)\to\dR^k$, then $\cS^k_{\epsilon}\cap B_1(p)$ is $k$-rectifiable.

Let $\delta(n,\epsilon)>0$ be determined later. Recall that by Lemma \ref{l:finite_bad_radii}, for every point $x\in X$, the number of $\delta$-bad scales is at most $N(n,\delta)$. For each $x\in B_1(p)$, let $s_x\in(0,1]$ be the minimum of $1$ and the smallest $\delta$-bad scale at $x$. Put $\Gamma_x^{\,t}=B_{t/2}(x)\cap \cS^k_\epsilon \cap \{y\in B_1(p):s_y>2t\}$. We claim that for any $t>0$, the map $ u|_{\Gamma_x^{\,t}}\colon {\Gamma_x^{\,t}}\to\dR^k$ is bi-Lipschitz onto its image. Once the claim is proved, we have that $\cS^k_\epsilon \cap \{y\in B_1(p):s_y>2t\}=\cup_{x\in B_1(p)} \Gamma_x^{\,t}$ is $k$-rectifiable. Therefore,
  $$\cS^k_\epsilon\cap B_1(p)=\bigcup_{t>0}\left(\cS^k_\epsilon\cap \{y\in B_1(p):s_y>2t\}\right)$$
  is rectifiable.

  Now we prove the claim. Let $x_1, y_1\in \Gamma_x^{\,s}$. Then $d(x_1, y_1)\le t<s_{x_1}/2$. Because $\Bad^\delta(x_1)\cap[0,s_{x_1})=\varnothing$, by Proposition \ref{p:interval no bad}, we have $T_{x_1}^\delta(0,2t)=T_{x_1}^\delta(0,s_{x_1})=0$. Note that $y_1\in \cS^k_\epsilon$ and thus $B_\rho(y_1)$ is not $(k+1,\epsilon)$-splitting for every $\rho\in(0,1]$. By Lemma \ref{l:cone split} (ii), we have
  \begin{align}
    \big|d( u(x_1), u(y_1))-d(x_1,y_1)\big|\le \epsilon\, d(x_1,y_1).
  \end{align}
\end{proof}

%
%

\vspace{.3cm}

\section{Packing Estimates}

We prove Theorem \ref{t:packing_estimate} in this section. The following is the key lemma. 

\begin{lemma}\label{l:finite non-sym scale-1} For any $n\in\dN$ and $\epsilon>0$, there exist $\delta(n,\epsilon)>0$ and $\beta (n,\epsilon)>0$ so that the following holds for any $(X,p)\in\Alex^n(-\delta)$. Suppose $u\colon B_{50}(p)\to \dR^k$ is a $(k,\delta)$-splitting function, and let $\{B_{r_i}(x_i)\}$ with $i\in\mathds I$ be a disjoint collection of balls living on a fixed level set $x_i\in u^{-1}(z)\cap B_1(p)$ for some $z\in\dR^k$.  Then if $x_i\in\cS^k_{\epsilon,\,\beta  r_i}$ we have the estimate $|\mathds I|<N(n,\epsilon)$.
\end{lemma}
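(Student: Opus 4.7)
The plan is to derive the bound by contradiction via Lemma \ref{l:cone split}, converting the presence of a singular $x_j$ into constraints on the bad scales of the other $x_i$'s, and then pigeonholing against Lemma \ref{l:finite_bad_radii}. The setup is especially clean because $u(x_i)=z$ for every $i$, so all pair-distances in the $u$-projection vanish identically. Precisely, given two centers $x_i,x_j$ with $r_i\le r_j$, set $r=d(x_i,x_j)\ge r_j$ (by disjointness); if one had $T_{x_i}^{\delta}(r,2r)=0$, Lemma \ref{l:cone split}(i) applied with $(x,y)=(x_i,x_j)$ would yield that $B_s(x_j)$ is $(k+1,\epsilon)$-splitting for every $s\le \beta r$, and choosing $s=\beta r_j\le \beta r$ then contradicts $x_j\in\cS^k_{\epsilon,\beta r_j}$. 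Hence for every such ordered pair, $x_i$ admits a $\delta$-bad scale lying in $[r,2r]$.

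Lemma \ref{l:finite_bad_radii} bounds the number of bad scales at each $x_i$ by $N_0=N_0(n,\delta)$, so the strategy is a double count that assigns each ordered pair $(i,j)$ with $r_i\le r_j$ to the bad scale $r_\beta$ of $x_i$ containing $d(x_i,x_j)$. The remaining task is to bound the number of $x_j$'s assigned to a single slot $(x_i,r_\beta)$: these candidates lie in the annulus $\{y:d(x_i,y)\in[r_\beta/2,r_\beta]\}$ with $r_j\ge r_i$, and all project to the same point $z$ under $u$. Proposition \ref{p:split-theory}(i) realizes $B_{10r_\beta}(x_i)$ as almost a product $\dR^k\times F_i$ with $F_i$ an essentially $(n-k)$-dimensional Alexandrov fiber, and these $x_j$'s descend to fiber-disjoint balls whose centers are $(0,\epsilon')$-singular in $F_i$.

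The main obstacle is this slot-wise count, namely controlling disjoint singular balls inside the fiber when the radii $r_j$ carry no uniform positive lower bound. I plan to handle it via the induction on $k$ promised in the outline of Theorem \ref{t:packing_estimate}: the induction hypothesis applied at level $k'=0$ inside the $(n-k)$-dimensional fiber $F_i$ yields a uniform slot bound $C(n,\epsilon)$. Combined with at most $N_0$ slots per center, the double-count $\binom{|\mathds I|}{2}\le|\mathds I|\cdot N_0\cdot C(n,\epsilon)$ produces $|\mathds I|\le 2N_0\,C(n,\epsilon)+1=:N(n,\epsilon)$, provided $\delta=\delta(n,\epsilon)$ and $\beta=\beta(n,\epsilon)$ are chosen small enough for Proposition \ref{p:interval no bad} and Proposition \ref{p:split-theory} to apply with the precision demanded by the threshold $\epsilon$ in Lemma \ref{l:cone split}.
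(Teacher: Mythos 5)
Your first step is sound and is in fact the same mechanism the paper exploits: since all centers satisfy $u(x_i)=u(x_j)=z$, Lemma \ref{l:cone split} forces $T^{\delta}_{x_i}(d(x_i,x_j),2d(x_i,x_j))=1$ (otherwise $u$ would be almost distance-preserving on $\{x_i,x_j\}$, respectively $B_{\beta r_j}(x_j)$ would split an extra factor), so every pair of centers produces a $\delta$-bad scale of one of them at scale comparable to their mutual distance. Up to the minor adjustment that Proposition \ref{p:interval no bad} needs an interval of ratio strictly larger than $2$, this part is fine.

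The genuine gap is the ``slot-wise count,'' and the proposed fix does not close it. For a fixed center $x_i$ and a fixed bad scale $r_\beta$, the competitors $x_j$ assigned to that slot are centers of \emph{disjoint} balls $B_{r_j}(x_j)$ lying in an annulus of size $\sim r_\beta$ about $x_i$, but the radii $r_j$ have no lower bound relative to $r_\beta$; bounding their number is exactly the original problem localized to an annulus, not a softer statement. Your plan to invoke ``the induction hypothesis at level $k'=0$ inside the fiber $F_i$'' fails on two counts. First, the level set $F_i=u^{-1}(\xi)$ is only an almost-factor in the sense of Proposition \ref{p:split-theory}(i); it is not known to be an Alexandrov space of dimension $n-k$, so none of the packing statements (which are proved for $X\in\Alex^n(-\delta)$) apply to it, and the error $\epsilon r$ in the almost-splitting at scale $r_\beta$ does not control the geometry of fiber balls at the much smaller scales $\beta r_j$ without a separate argument. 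Second, and more seriously, the appeal is circular in the paper's logical order: the $k=0$ packing estimate is itself obtained from this very lemma (Lemma \ref{l:finite non-sym scale-1} at $k=0$, where $u$ is trivial and the level-set constraint is vacuous), so there is no independently established ``level $k'=0$'' statement to quote, and for $k=0$ your fiber reduction gives nothing at all since the fiber is $X$ itself. The paper resolves precisely this difficulty by a different counting architecture: it iterates a Vitali covering of $u^{-1}(z)\cap B_1(p)$ refined at uniformly good scales (Lemma \ref{l:uniform_good_scale}), uses the cone-splitting/level-set mechanism only to show that no centers lie in the intermediate annuli, counts only $N(n,\eta)$ balls of definite relative size at each stage, and gains at least one additional bad scale for every point surviving to the next stage, so that Lemma \ref{l:finite_bad_radii} terminates the iteration after $K(n,\epsilon)$ steps with $|\mathds I|\le K N^{K}$. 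To make your outline into a proof you would need to supply an argument of this multiscale type (or an independent proof of the slot bound for balls of unbounded relative smallness), not a reference to the induction on $k$.
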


\begin{proof} 
We will construct a sequence Vitali coverings of $u^{-1}(z)\cap B_1(p)$, which ``converges" to  $\{B_{r_i}(x_i),\, i\in \mathds I\}$. The constants $\delta(n,\epsilon)$, $\eta(n,\epsilon)>0$ and $\lambda>0$ will be determined later.

(Step 1.) Let $\bar B_\rho$ be an arbitrary closed ball with $W\subseteq \bar B_\rho$ be a closed subset and $\mathds I(W)=\{i\in \mathds I: x_i\in W\}$. For $x\in W$ and $0<\epsilon, s\le 1$, define function
   \begin{align}
   \sigma(x,\epsilon, s) &= \renewcommand{\arraystretch}{1.5}
   \left\{\begin{array}{@{}l@{\quad}l@{}}
    \inf\Big\{\tau: T^\epsilon_{x}(\tau s, 2s)=0\Big\}, & \text{ if \quad} T^\epsilon_{x}(s, 2s)=0;
    \\
    1, & \text{ otherwise.}
  \end{array}\right.
  \label{l:good_scale_covering.e5}
\end{align}
By Lemma \ref{l:uniform_good_scale}, for each $0<\lambda<4^{-1}$ there exists $\eta=\eta(n,\epsilon, \lambda)>0$ such that for any $x\in W$, there exists $r_x\in[\eta\rho,\,\rho]$ such that $T^\epsilon_{x}(\lambda r_x, 2r_x)=0$. Therefore, we have
\begin{align}
  \lambda_{x}\equiv\sigma(x,\epsilon,\,r_x)\le \lambda
  \label{l:good_scale_covering.e6}
\end{align}

Define $\mathcal F(W)=\{i\in \mathds I(W): r_i\ge \frac{1}{10}\eta\rho\}$ and ${\mathcal F^c}(W)=\mathds I(W)\setminus \mathcal F(W)$. It is clear that $|\mathcal F(W)|\le N(n,\eta)$, since $r_i\ge \frac{1}{10}\eta\rho$ and $W\subseteq \bar B_\rho$.  Now because $r_i<\frac{1}{10}\eta\rho\le\frac1{10}r_{x_i}$ for every $i\in {\mathcal F^c}(W)$, we have that $\{B_{\frac1{10}r_{x_i}}(x_i), i\in{\mathcal F^c}(W)\}$ is a covering of $\cup_{i\in {\mathcal F^c}(W)}B_{r_i}(x_i)$. Let $\mathcal G(W)\subseteq {\mathcal F^c}(W)$ be a collection of indices so that $\{B_{\frac1{10}r_{x_j}}(x_j),\, j\in\mathcal G(W)\}$ covers $\cup_{i\in {\mathcal F^c}(W)}B_{r_i}(x_i)$, while $\{B_{\frac1{50}r_{x_j}}(x_j),\, j\in\mathcal G(W)\}$ are disjoint. It's clear that $|\mathcal G(W)|\le N(n,\eta)$, since $r_{x_i}\ge \eta\rho$. Now we have
\begin{align}
  \mathds I(W)
  &\subseteq\mathcal F(W)
  \cup \left(\underset{j\in\mathcal G(W)}\cup\mathds I\left(B_{\frac1{10}r_{x_j}}(x_j)\right)\right),
    \label{e:finite non-sym scale-2}
\end{align}
where $|\mathcal F(W)|+|\mathcal G(W)|\le N(n,\eta)$.

Note that function $\sigma(x,\epsilon, s)$ is semi-continuous in $x$. That is, $\displaystyle\liminf_{z\to y}\sigma(z,\epsilon,\,r_x)\ge\sigma(y,\epsilon,\,r_x)$. For each $j\in \mathcal G(W)$, there exists $y_j\in \bar B_{\lambda_{x_j}r_{x_j}}(x_j)\cap W$ so that \begin{align}
  \sigma_{y_j}\equiv\sigma(y_j,\epsilon,\,r_{x_j})=\inf\{\sigma(x,\epsilon,\,r_{x_j}): x\in \bar B_{\lambda_{x_j}r_{x_j}}(x_j)\}\le\lambda_{x_j}\le\lambda.
\end{align}
We claim that $\mathcal F(W)$ and $\mathcal G(W)$ satisfy the following properties.
\begin{enumerate}
  \renewcommand{\labelenumi}{(\arabic{enumi})}
  \setlength{\itemsep}{1pt}
  \item $|\mathcal F(W)|+|\mathcal G(W)|\le N(n,\eta)$.
  \item $\displaystyle\mathds I(W)=\mathcal F(W)
  \cup \left(\underset{j\in\mathcal G(W)}\cup\mathds I\left(B_{\lambda_{x_j}r_{x_j}}(x_j)\cap B_{\sigma_{y_j}r_{x_j}}(y_j)\right)\right)$.
  \item  If $\sigma_{y_j}>0$, then for every $z\in \bar B_{\lambda_{x_j}r_{x_j}}(x_j)\cap \bar B_{\sigma_{y_j}r_{x_j}}(y_j)$
  \begin{align}
  \Big|\Bad^\epsilon(z)\cap[\sigma_{y_j}r_{x_j},1]\Big|
  \ge
  \Big|\Bad^\epsilon(z)\cap[\rho,1]\Big|+1.
  \end{align}

\end{enumerate}
Statement (1) has been proved in the construction. To prove (2), we start with an obvious inclusion formula:
\begin{align}
  B_{\frac1{10}r_{x_j}}(x_j)
  &\subseteq\left(A_{\lambda_{x_j}r_{x_j}}^{\frac1{10}r_{x_j}}(x_j)
  \cup A_{\sigma_{y_j}r_{x_j}}^{\frac1{10}r_{x_j}}(y_j)\right)
  \cup \Big(\bar B_{\lambda_{x_j}r_{x_j}}(x_j)\cap\bar B_{\sigma_{y_j}r_{x_j}}(y_j)\Big)
  \notag\\
  &\cup \Big(\bar B_{\lambda_{x_j}r_{x_j}}(x_j)\setminus \bar B_{r_{x_j}}(y_j)\Big)
  \cup \Big(\bar B_{\sigma_{x_j}r_{x_j}}(y_j)\setminus \bar B_{r_{x_j}}(x_j)\Big).
  \label{e:finite non-sym scale-3}
\end{align}
Let $\lambda<\frac1{10}$ be a constant. Note that $d(x_j, y_j)\le \sigma_{y_j}r_{x_j}\le \lambda_{x_j}r_{x_j}\le r_{x_j}/10$. Thus we have
$\bar B_{\lambda_{x_j}r_{x_j}}(x_j)\subseteq \bar B_{r_{x_j}}(y_j)$ and $\bar B_{\sigma_{y_j}r_{x_j}}(y_j)\subseteq \bar B_{r_{x_j}}(x_j)$.  In particular we then have the better inclusion
\begin{align}
  B_{\frac1{10}r_{x_j}}(x_j)
  &\subseteq\left(A_{\lambda_{x_j}r_{x_j}}^{\frac1{10}r_{x_j}}(x_j)
  \cup A_{\sigma_{y_j}r_{x_j}}^{\frac1{10}r_{x_j}}(y_j)\right)
  \cup \Big(\bar B_{\lambda_{x_j}r_{x_j}}(x_j)\cap\bar B_{\sigma_{y_j}r_{x_j}}(y_j)\Big)\, .
  \label{e:finite non-sym scale-3-new}
\end{align}
It remains to show that
\begin{align}
\mathds I\left(A_{\lambda_{x_j}r_{x_j}}^{\frac1{10}r_{x_j}}(x_j)\right)=\mathds I\left(A_{\sigma_{y_j}r_{x_j}}^{\frac1{10}r_{x_j}}(y_j)\right)=\varnothing.
\end{align}

Suppose $\mathds I\left(A_{\lambda_{x_j}r_{x_j}}^{\frac1{10}r_{x_j}}(x_j)\right)\neq\varnothing$. That is, there exists $i\in {\mathcal F^c}(W)$, so that $\lambda_{x_j}r_{x_j}\le d(x_i,x_j)\le \frac1{10}r_{x_j}$.
 Then from the definition of $r_{x_j}$ we have
 \begin{align}
 T_{x_j}^{\epsilon}\Big(d(x_i, x_j),\, 2d(x_i, x_j)\Big)=0\, .	
 \end{align}
Now let $\beta =\beta (n,\epsilon)>0$ be the constant determined in Lemma \ref{l:cone split}. Because $B_{\beta r_i}(x_i)$ is not $(k+1,\epsilon)$-splitting and $r_i\le d(x_i,x_j)$, the restricted  map $u|_{\{x_i,x_j\}}$ is $(1\pm 4\epsilon)$-bi-Lipschitz. This contradicts to the assumption $u(x_i)=u(x_j)=z$. The proof for $\mathds I\left(A_{\sigma_{y_j}r_{x_j}}^{\frac1{10}r_{x_j}}(y_j)\right)=\varnothing$ is similar.

To prove (3), let $z\in \bar B_{\lambda_{x_j}r_{x_j}}(x_j)\cap \bar B_{\sigma_{y_j}r_{x_j}}(y_j)\in \mathcal D(W_r)$. By the definition of $\sigma_{y_j}$, we have $\sigma(z,\epsilon,\,r_{x_j})\ge\sigma_{y_j}>0$. Thus $T_z^\epsilon\left(\frac12\sigma_{y_j}r_{x_j}, r_{x_j}\right)=1$. By the definition of bad scales, this implies $\Big|\Bad^\epsilon(z)\cap[\sigma_{y_j}r_{x_j},r_{x_j}]\Big|\ge 1$. Then (3) follows since $[\sigma_{y_j}r_{x_j},r_{x_j}]\subseteq[\sigma_{y_j}r_{x_j},\rho]$.

(Step 2.) In this step we construct a covering of $\mathds I$ inductively. Let the decomposition functions $\mathcal F$ and $\mathcal G$ be defined in Step 1. Begin with $W=B_1(p)$. Let $\mathcal C_1=\mathcal F(W)$ and $\mathcal D_1=\mathcal G(W)$. Suppose $\mathcal C_k$ and $\mathcal D_k$ have been constructed and satisfy the following $(A_k) - (C_k)$:
\begin{enumerate}
  \renewcommand{\labelenumi}{($\Alph{enumi}_k$)}
  \setlength{\itemsep}{1pt}
  \item $|\mathcal C_k|\le kN(n,\eta)^k$, $|\mathcal D_k|\le N(n,\eta)^k$.
  \item $\displaystyle\mathds I=\mathcal C_k
  \cup \left(\underset{j\in\mathcal D_k}\cup\mathds I\left(B_{\lambda_{x_j}r_{x_j}}(x_j)\cap B_{\sigma_{y_j}r_{x_j}}(y_j)\right)\right)$.
  \item  $\big|\Bad^\epsilon(z)\cap[\sigma_{y_j}r_{x_j},1]\big|\ge k$ for any $j\in\mathcal D_k$ and $z\in B_{\lambda_{x_j}r_{x_j}}(x_j)\cap B_{\sigma_{y_j}r_{x_j}}(y_j)$, provided $\sigma_{y_j}>0$.
\end{enumerate}
For each $j\in\mathcal D_k$ and $W_j=\bar B_{\lambda_{x_j}r_{x_j}}(x_j)\cap \bar B_{\sigma_{y_j}r_{x_j}}(y_j)$, using the construction of Step 1 let
$$\mathcal C_{k+1}=\mathcal C_k\cup\left(\cup_{j\in\mathcal D_k}\mathcal F(W_j)\right)$$
and
$$\mathcal D_{k+1}=\cup_{j\in\mathcal D_k}\mathcal G(W_j).$$

Now we prove $(A_{k+1})$ -- $(C_{k+1})$ for $\mathcal C_{k+1}$ and $\mathcal D_{k+1}$. By (1) in Step 1, we have $|\mathcal F(W_j)|+|\mathcal G(W_j)|\le N(n,\eta)$. Thus
$$|\mathcal C_{k+1}|\le |\mathcal C_k|+N|\mathcal D_k|\le kN^k+N^{k+1}\le (k+1)N^{k+1}$$
and
$$|\mathcal D_{k+1}|\le N|\mathcal D_k|\le N^{k+1}.$$
Statements $(B_{k+1})$ and $(C_{k+1})$ follow from (2) and (3) respectively.

(Step 3.) By Lemma \ref{l:finite_bad_radii}, the number of $\epsilon$-bad scales is at most $K=K(n,\epsilon)$. Thus due to $(C_k)$, we have $\mathcal D_k=\varnothing$ if $k>K$. Therefore, $\mathds I=\mathcal C_{K}$ and $|\mathds I|=|\mathcal C_{K}|\le KN^{K}$.
\end{proof}

Furthermore, we have the following theorem.

\begin{theorem}\label{t:finite non-sym scale} For any $n\in\dN$, $\epsilon>0$ and $\Lambda\ge 1$, there exist $\delta(n,\epsilon)>0$ and $\beta (n,\epsilon)>0$ so that the following holds for any $(X,p)\in\Alex^n(-1)$. Suppose that there is a $(k,\delta)$-splitting function $u\colon B_{50}(p)\to \dR^k$. If $\{B_{r_i}(x_i)\}$ are disjoint and $B_{\beta \Lambda r_i}(x_i)\cap\cS^k_{\epsilon,\,\beta \Lambda r_i}\neq\varnothing$ for all $i\in\mathds I$, then for any $z\in\dR^k$, we have
\begin{align}
  \Big|\Big\{i\in\mathds I\colon B_{\beta \Lambda r_i}(x_i)\cap u^{-1}(z)\neq \varnothing\Big\}\Big|<N(n,\epsilon, \Lambda).
  \label{t:finite non-sym scale.se1-0}
\end{align}
Additionally, if $r_i=r$ with $B_{\Lambda r}(x_i)\cap\cS^k_{\epsilon,\,\Lambda r}\neq\varnothing$ and $\{B_{r}(x_i)\}$ are disjoint for all $i\in\mathds I$, then for any $z\in\dR^k$, we have
\begin{align}
  \Big|\Big\{i\in\mathds I\colon B_{\Lambda r}(x_i)\cap u^{-1}(z)\neq \varnothing\Big\}\Big|<N(n,\epsilon, \Lambda).
  \label{t:finite non-sym scale.se2-0}
\end{align}
\end{theorem}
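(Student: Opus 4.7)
The plan is to reduce Theorem~\ref{t:finite non-sym scale} to Lemma~\ref{l:finite non-sym scale-1} by projecting the near-singular content onto the level set $u^{-1}(z)$ via the splitting map from Proposition~\ref{p:split-theory}(i), and then invoking the center-on-slice lemma on the projected family.

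For each index $i$ with $B_{\beta\Lambda r_i}(x_i)\cap u^{-1}(z)\neq\varnothing$, I pick witnesses $q_i\in B_{\beta\Lambda r_i}(x_i)\cap u^{-1}(z)$ and $y_i\in B_{\beta\Lambda r_i}(x_i)\cap\cS^k_{\epsilon,\beta\Lambda r_i}$. Then $d(y_i,q_i)<2\beta\Lambda r_i$ and, since $u$ is $(1+\delta)$-Lipschitz, $|u(y_i)-z|=|u(y_i)-u(q_i)|<3\beta\Lambda r_i$. Applying Proposition~\ref{p:split-theory}(i) to a sub-ball of $B_5(p)$ comfortably containing $q_i$ and $y_i$ produces a map $\phi$ into $u^{-1}(z)$ so that $(u,\phi)$ is a $\delta'$-isometry, and I set $\tilde y_i:=\phi(y_i)\in u^{-1}(z)$, which satisfies $d(y_i,\tilde y_i)\leq(1+\delta')|u(y_i)-z|<4\beta\Lambda r_i$.

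Next I verify two properties of the projected family $\{B_{r_i/8}(\tilde y_i)\}$. Disjointness follows from $d(\tilde y_i,\tilde y_j)\geq d(x_i,x_j)-5\beta\Lambda(r_i+r_j)\geq (1-5\beta\Lambda)(r_i+r_j)\geq\tfrac14(r_i+r_j)$ provided $\beta\Lambda$ is small enough. For singularity at the right scale, let $\beta_0=\beta_0(n,\epsilon)$ denote the constant from Lemma~\ref{l:finite non-sym scale-1}, and constrain the parameters so that $B_{\beta\Lambda r_i}(y_i)\subset B_{\beta_0 r_i/8}(\tilde y_i)$. If $B_{\beta_0 r_i/8}(\tilde y_i)$ were $(k+1,\delta'')$-splitting, Corollary~\ref{c:sp in all scale} would propagate this to a $(k+1,\epsilon)$-splitting of $B_{\beta\Lambda r_i}(y_i)$, contradicting $y_i\in\cS^k_{\epsilon,\beta\Lambda r_i}$. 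Hence $\tilde y_i\in\cS^k_{\delta'',\beta_0\cdot(r_i/8)}$.

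Finally, Lemma~\ref{l:finite non-sym scale-1} applied to $\{B_{r_i/8}(\tilde y_i)\}$, whose centers lie in $u^{-1}(z)\cap B_2(p)$, gives the bound $N(n,\epsilon)$ after covering $B_2(p)$ by a controlled number $C(n,\Lambda)$ of unit-scale pieces and renormalizing the splitting function. This proves~(\ref{t:finite non-sym scale.se1-0}), and~(\ref{t:finite non-sym scale.se2-0}) is the specialization $r_i\equiv r$. The main obstacle is the interlocking choice of constants $\beta,\delta,\delta',\delta''$: the projection $\phi$ must be well-defined on a ball containing $y_i$, the offset $d(y_i,\tilde y_i)$ must be small relative to $\beta_0 r_i$ so that Corollary~\ref{c:sp in all scale} can descend the non-splitting property from $\tilde y_i$ to $y_i$, and the projected balls must remain disjoint — all of which force $\beta\Lambda$ to be bounded by a constant depending on $n,\epsilon$, and the $\Lambda$-dependence of the final bound to be absorbed into the covering count $C(n,\Lambda)$.
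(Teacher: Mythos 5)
Your treatment of (\ref{t:finite non-sym scale.se1-0}) follows the paper's route: relocate each index to a quantitatively singular point lying on the slice $u^{-1}(z)$, transfer the non-splitting information by the contrapositive of Corollary \ref{c:sp in all scale}, and feed the resulting disjoint family into Lemma \ref{l:finite non-sym scale-1}. The paper does this slightly more directly: it takes the given point $\bar x_i\in B_{\beta\Lambda r_i}(x_i)\cap u^{-1}(z)$ itself as the new center, so your projection $\tilde y_i=\phi(y_i)$ via Proposition \ref{p:split-theory}(i), and the attendant almost-isometry bookkeeping, are an unnecessary detour (though not incorrect, provided the sub-ball you project in has radius comparable to $\beta\Lambda r_i$).

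The genuine gap is your final claim that (\ref{t:finite non-sym scale.se2-0}) ``is the specialization $r_i\equiv r$.'' It is not: in the second estimate the hypotheses and the conclusion are at radius $\Lambda r$, not $\beta\Lambda r$, so they differ from the first estimate by the factor $1/\beta$. Non-splitting at scale $\Lambda r$ does not imply non-splitting at the smaller scale $\beta\Lambda r$, and the set $\{i\colon B_{\Lambda r}(x_i)\cap u^{-1}(z)\neq\varnothing\}$ is larger than the one controlled by the first estimate, so no direct specialization is possible. The paper reduces (\ref{t:finite non-sym scale.se2-0}) to (\ref{t:finite non-sym scale.se1-0}) by setting $r'=r/\beta$, so that $B_{\beta\Lambda r'}(x_i)=B_{\Lambda r}(x_i)$, and then observes that the balls $B_{r'}(x_i)$ are no longer disjoint; it closes this with a bounded-multiplicity argument (if $B_{r'}(x_i)\cap B_{r'}(x_j)\neq\varnothing$ then $B_r(x_j)\subseteq B_{3r'}(x_i)$, so each ball meets at most $N(n,\beta)$ others, and the family splits into $N(n,\beta)$ disjoint subfamilies, to each of which the first estimate applies). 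Your proposal is missing exactly this re-covering step. A related defect shows up in your constants for the first estimate: the inclusions you impose (e.g.\ $B_{\beta\Lambda r_i}(y_i)\subset B_{\beta_0 r_i/8}(\tilde y_i)$ and the $\tfrac14(r_i+r_j)$ separation) force $\beta\Lambda\le c(n,\epsilon)$, i.e.\ $\beta$ depending on $\Lambda$, whereas the statement demands $\beta=\beta(n,\epsilon)$ with $\Lambda\ge 1$ arbitrary and only $N$ allowed to depend on $\Lambda$; covering $B_2(p)$ by $C(n,\Lambda)$ unit pieces does not repair this, since the obstruction lies in the ball inclusions, not in the location of the centers. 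The natural repair is again the multiplicity device: work with balls of radius comparable to $\Lambda r_i$ around the relocated centers and restore disjointness by decomposing into $N(n,\epsilon,\Lambda)$ subfamilies, letting that multiplicity enter the final count.
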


%

\begin{proof}
Let $\bar x_i\in B_{\beta \Lambda r_i}(x_i)\cap u^{-1}(z)$ and $y_i\in B_{\beta \Lambda r_i}(x_i)\cap\cS^k_{\epsilon,\,\beta \Lambda r_i}$. There exists $\eta(n,\epsilon)>0$ such that $\bar x_i\in \cS^k_{\eta,10\beta \Lambda r_i}$, since $B_{10\beta \Lambda r_i}(\bar x_i)\supseteq B_{\beta \Lambda r_i}(y_i)$ and $B_{\beta \Lambda r_i}(y_i)$ is not $(k,\epsilon)$-splitting. Moreover, we have that $B_{r_i/2}(\bar x_i)$ are disjoint, because $B_{r_i/2}(\bar x_i)\subseteq B_{r_i}(x_i)$. Estimate (\ref{t:finite non-sym scale.se1-0}) follows by applying Lemma \ref{l:finite non-sym scale-1} to the collection $\{B_{r_i/2}(\bar x_i)\}$.

To prove (\ref{t:finite non-sym scale.se2-0}), one can go through the proof of Lemma \ref{l:finite non-sym scale-1} and (\ref{t:finite non-sym scale.se1-0}) with small modifications, or use the following re-covering arguments. Let $r'=r/\beta $. Then we have $B_{\beta  \Lambda r'}(x_i)=B_{\Lambda r}(x_i)$. The given conditions $B_{\Lambda r}(x_i)\cap\cS^k_{\epsilon,\,\Lambda r}\neq\varnothing \text{ and } B_{\Lambda r}(x_i)\cap u^{-1}(z)\neq \varnothing$ are equivalent to $B_{\beta  \Lambda r'}(x_i)\cap\cS^k_{\epsilon,\,\beta  \Lambda r'}\neq\varnothing$ and $B_{\beta  \Lambda r'}(x_i)\cap u^{-1}(z)\neq\varnothing$, respectively. The collection $\{B_{r'}(x_i)\}$ is not disjoint, so we can't use (\ref{t:finite non-sym scale.se1-0}) directly.  However, note that if $B_{r'}(x_i)\cap B_{r'}(x_j)\neq\varnothing$, then $B_r(x_j)\subseteq B_{2r'}(x_i)$. Because $\{B_{r}(x_i)\}$ are disjoint, for every $i$, there are at most $N(n,r'/r)=N(n,\beta )$ balls $B_{r'}(x_j)$ such that $B_{r'}(x_i)\cap B_{r'}(x_j)\neq\varnothing$. Therefore, the collection $\{B_{r'}(x_i)\}$ can be written as the union of $N(n,\beta )$ disjoint collections. Then the result follows from (\ref{t:finite non-sym scale.se1-0}).
\end{proof}

Let us now remark on a standard covering argument.  Let $\mathfrak{B}$ be a collection of sets. The intersection number $\mathcal N(\mathcal B)$ of $\mathfrak{B}$ is the minimum number $k$ so that $B_1\cap B_2\cap\dots\cap B_{k+1}=\varnothing$ for any $B_1,B_2,\dots,B_{k+1}\in \mathfrak{B}$. In particular, if $\mathcal N(\mathcal B)=1$, then $\mathfrak{B}$ is a disjoint collection. We have the following easy lemma:

\begin{lemma}\label{l:intersection_packing_est}
  Let $B_R(0)\subset \dR^k$ and $\mathfrak{B}=\{B_{r_i}(x_i)\subseteq B_R(0)\}$ be a collection of balls. If the intersection number $\mathcal N(\mathfrak{B})\le N<\infty$, then $\sum r_i^k< N\cdot C(k)R^k$.
\end{lemma}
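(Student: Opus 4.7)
The plan is to prove this via a volume (Lebesgue measure) counting argument, exploiting the fact that the intersection-number hypothesis translates directly into a pointwise bound on the sum of indicator functions. Specifically, $\mathcal N(\mathfrak B)\le N$ means that no point of $\mathbb R^k$ lies in $N+1$ distinct balls of the collection, so
\begin{align}
\sum_i \mathbf 1_{B_{r_i}(x_i)}(x) \le N \quad \text{for all } x\in\mathbb R^k.
\end{align}

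Integrating this inequality against Lebesgue measure and using that every ball $B_{r_i}(x_i)$ is contained in $B_R(0)$ gives
\begin{align}
\sum_i \mathrm{Vol}(B_{r_i}(x_i)) \;=\; \int_{\mathbb R^k}\sum_i \mathbf 1_{B_{r_i}(x_i)}(x)\,dx \;\le\; N\cdot \mathrm{Vol}(B_R(0)).
\end{align}
Since $\mathrm{Vol}(B_r(x))=\omega_k r^k$ in $\mathbb R^k$, dividing through by $\omega_k$ yields $\sum_i r_i^k \le N R^k$, which gives the desired estimate with $C(k)=1$ (any slack one wants for a strict inequality can be absorbed into the constant). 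I expect no genuine obstacle here; the only thing to be mildly careful about is that the definition of intersection number is stated in terms of \emph{closed} conditions on finite sub-collections, so one should briefly remark that $\mathcal N(\mathfrak B)\le N$ is equivalent to the pointwise multiplicity bound above, which is immediate from the definition.
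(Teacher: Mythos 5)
Your argument is correct: the intersection-number hypothesis is exactly the pointwise multiplicity bound $\sum_i \mathbf{1}_{B_{r_i}(x_i)}\le N$, and integrating over $B_R(0)$ gives $\sum_i r_i^k\le N R^k$, which is the claimed estimate (the constant $C(k)$ absorbs any concern about strictness). The paper offers no written proof, labeling this a standard covering fact, and your volume-counting argument is precisely the standard one intended, so there is nothing to add.
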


%
%
%
%

Now let us prove a local version of Theorem \ref{t:packing_estimate}.

\begin{lemma}[Local packing estimate] \label{l:packing_estimate local}
For any $n\in\dN$, $\epsilon>0$, $R\le 1$ and $\Lambda\ge 1$, there exists $\delta(n,\epsilon)>0$ and $\beta (n,\epsilon)>0$ so that the following hold for any $(X,p)\in\Alex^n(-1)$, provided that $B_{500R}(p)$ is $(k,\delta)$-splitting.
\begin{enumerate}
  \renewcommand{\labelenumi}{(\roman{enumi})}
  \item If $x_i\in \cS^k_{\epsilon,\,\beta   r_i}\cap B_R(p)$ with $r_i\le R$ and $\{B_{r_i}(x_i)\}$ are disjoint for all $i\in\mathds I$, then $\displaystyle\sum_{i\in\mathds I} r_i^k < C(n,\epsilon)R^k$.
\item If $x_i\in \cS^k_{\epsilon,\,\Lambda r}\cap B_R(p)$ with $r\le R$ and $\{B_{r}(x_i)\}$ are disjoint for all $i\in\mathds I$, then $|\mathds I| < C(n,\epsilon,\Lambda)(R/r)^{k}$.
\end{enumerate}
%
%
\end{lemma}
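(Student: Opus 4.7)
The strategy is to push the collection of balls forward to $\dR^k$ via a splitting map, then invoke the intersection-number estimate furnished by Theorem \ref{t:finite non-sym scale} and combine it with the elementary Euclidean packing estimate in Lemma \ref{l:intersection_packing_est}.

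First, I would reduce to the existence of a splitting map. After rescaling $(X,p)\to (X/R, p)$ (and, if necessary, partitioning into a bounded number of sub-balls so that the rescaled space lies in $\Alex^n(-\delta)$), the hypothesis that $B_{500R}(p)$ is $(k,\delta)$-splitting together with Proposition \ref{p:split-theory} (ii) yields a $(k,\delta_1)$-splitting map $u\colon B_{100}(p)\to \dR^k$, where $\delta_1=\delta_1(n,\epsilon)$ is chosen small enough to apply Theorem \ref{t:finite non-sym scale} and Lemma \ref{l:cone split}.

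Second, I would transfer the Alexandrov balls $\{B_{r_i}(x_i)\}$ to a Euclidean collection. Set $\tilde y_i = u(x_i)\in \dR^k$. By Proposition \ref{p:split-theory} (i), the map $(u,\phi)\colon B_{\beta r_i}(x_i)\to \dR^k\times u^{-1}(\tilde y_i)$ is an $\epsilon r_i$-isometry; in particular the image $u(B_{\beta r_i}(x_i))$ is $\epsilon r_i$-dense in $B_{\beta r_i}(\tilde y_i)$. Hence, after absorbing the $\epsilon r_i$-slack into the constant $\beta$, every $z\in B_{\beta r_i/4}(\tilde y_i)$ satisfies $u^{-1}(z)\cap B_{\beta r_i}(x_i)\ne\varnothing$.

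Third, I would invoke Theorem \ref{t:finite non-sym scale} with $\Lambda=1$: the hypothesis $x_i\in \cS^k_{\epsilon,\beta r_i}$ gives $B_{\beta r_i}(x_i)\cap \cS^k_{\epsilon,\beta r_i}\ne \varnothing$, so for every $z\in \dR^k$ at most $N(n,\epsilon)$ of the balls $B_{\beta r_i}(x_i)$ meet $u^{-1}(z)$. Combined with the preceding step, this shows that $\{B_{\beta r_i/4}(\tilde y_i)\}\subset \dR^k$ has intersection number at most $N(n,\epsilon)$. Since $u$ is approximately $1$-Lipschitz, all $\tilde y_i$ lie in $B_2(u(p))\subset \dR^k$, so Lemma \ref{l:intersection_packing_est} yields
\begin{equation*}
\sum_{i\in\mathds I}\bigl(\beta r_i/4\bigr)^k\;\le\; N(n,\epsilon)\cdot C(k)\cdot 2^k,
\end{equation*}
and unrescaling proves (i). For (ii) I would repeat the argument with the $\Lambda$-parametrized form of Theorem \ref{t:finite non-sym scale}: the balls $\{B_{\Lambda r/4}(\tilde y_i)\}$ then have intersection number at most $N(n,\epsilon,\Lambda)$, and since all radii coincide, Lemma \ref{l:intersection_packing_est} gives $|\mathds I|\cdot(\Lambda r/4)^k\le C(n,\epsilon,\Lambda)R^k$, i.e. $|\mathds I|\le C(n,\epsilon,\Lambda)(R/r)^k$.

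The main obstacle I expect is the approximate-surjectivity step used to descend from intersections of Euclidean balls $B_{\beta r_i/4}(\tilde y_i)$ to genuine non-empty intersections $u^{-1}(z)\cap B_{\beta r_i}(x_i)$. This relies on carefully tracking the $\epsilon r_i$-error in Proposition \ref{p:split-theory} (i) and requires $\delta$ (equivalently $\delta_1$) to be chosen substantially smaller than $\beta$; a minor but necessary technicality is also the initial rescaling/covering needed to place the rescaled space inside $\Alex^n(-\delta)$ for the range $\sqrt\delta<R\le 1$, which produces at worst a dimension-dependent multiplicative constant absorbed into $C(n,\epsilon)$.
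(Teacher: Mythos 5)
Your proposal is correct and follows essentially the same route as the paper: produce a $(k,\delta_1)$-splitting map $u$ via Proposition \ref{p:split-theory}, note that for $z$ in a slightly shrunken Euclidean ball about $u(x_i)$ the level set $u^{-1}(z)$ meets $B_{\beta r_i}(x_i)$, bound the resulting intersection number of the pushed-forward balls in $\dR^k$ by Theorem \ref{t:finite non-sym scale} (with the $\Lambda$-version \eqref{t:finite non-sym scale.se2-0} for part (ii)), and conclude with Lemma \ref{l:intersection_packing_est}. The approximate-surjectivity point and the rescaling technicality you flag are handled (or glossed over) in the paper in exactly the same way, so there is no substantive difference.
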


\begin{proof}
 We prove (i) only and the proof of (ii) is similar, modulo (\ref{t:finite non-sym scale.se2-0}). By Proposition \ref{p:split-theory}, there is a $\delta_1$-splitting map $u\colon B_{50R}(p)\to \dR^k$. Assume $u(p)=0^k\in \dR^k$.

  Consider the collection of balls $\mathfrak{B}=\{B_{\frac12\beta r_i}(u(x_i)), i\in\mathds I\}$ in $\dR^k$. Because $u$ is 1-Lipschitz, we have that $B_{\frac12\beta r_i}(u(x_i))\subseteq B_{2R}(0^k)$. Given $z\in \dR^k$, let $\mathds I_z=\{i\in\mathds I: z\in B_{\frac12\beta r_i}(u(x_i))\}$.
  By Proposition \ref{p:split-theory} again, we have $u^{-1}(z)\cap B_{\beta r_i}(x_i)\neq\varnothing$. It follows from (\ref{t:finite non-sym scale.se1-0}) that $|\,\mathds I_z\,|\le N(n,\epsilon)$. This shows that the intersection number $\mathcal N(\mathfrak{B})\le N(n,\epsilon)$. Then the desired result follows from Lemma \ref{l:intersection_packing_est}.
\end{proof}

%
%

Now we prove Theorem \ref{t:packing_estimate} by showing the following stronger statement.

\begin{theorem}[Packing estimate]\label{t:packing_estimate2}
Lemma \ref{l:packing_estimate local} still holds if the splitting assumption is dropped.
\end{theorem}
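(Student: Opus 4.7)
The plan is to induct on $k$, using Lemma \ref{l:packing_estimate local} as the local engine at each step. The base case $k=0$ is immediate: Lemma \ref{l:packing_estimate local}(i) with $k=0$ requires $B_{500}(p)$ to be $(0,\delta)$-splitting, which is vacuous since every metric space is trivially $0$-splitting, and the lemma directly yields $|\mathds I|<C(n,\epsilon)$ for any disjoint collection with $x_i\in\cS^0_{\epsilon,\beta r_i}\cap B_1(p)$.

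For the inductive step, suppose the statement holds at level $k-1$. Fix $\epsilon>0$, let $\delta=\delta(n,\epsilon)$ and $\beta_0=\beta(n,\epsilon)$ be the constants from Lemma \ref{l:packing_estimate local}, and let $C_{k-1}$, $\beta_{k-1}$ be the inductive constants for $k-1$ applied with parameter $\delta$ in place of $\epsilon$. I would build a recursive Vitali-type decomposition of $B_1(p)$: starting from the root $B_1(p)$, at each node $B_R(q)$ I examine the enlarged ball $B_{500R}(q)$. If $B_{500R}(q)$ is $(k,\delta)$-splitting, declare the node a \emph{leaf}, and apply Lemma \ref{l:packing_estimate local} there to obtain $\sum_{x_i\in B_R(q)} r_i^k \le C_0\, R^k$. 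Otherwise $q\in\cS^{k-1}_{\delta,500R}$; declare the node \emph{internal}, subdivide $B_R(q)$ by a Vitali cover into a bounded number $N(n)$ of sub-balls of radius $R/10$, and recurse. The recursion is terminated at the scale $R\asymp r_i$ for each $x_i$, below which no further refinement is needed.

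To finish the proof, one totals the leaf contributions $\sum_{\text{leaves}} C_0\, R_a^k$. At dyadic scale $R_m=2^{-m}$, the internal-node centers lie in $\cS^{k-1}_{\delta,500R_m}$ and the balls $\{B_{R_m/10}(q^m_j)\}$ are pairwise disjoint. After rescaling the radii to $\rho_j=500R_m/\beta_{k-1}$ and partitioning into a bounded number $N(n,\beta_{k-1})$ of sub-collections whose $\rho_j$-balls are disjoint (absorbing the intersection multiplicity), the inductive hypothesis yields at most $C'_{k-1}\cdot R_m^{-(k-1)}$ internal nodes at scale $R_m$. Each such node spawns at most $N(n)$ children, so the leaf mass at scale $R_m$ is bounded by $C\cdot R_m^k\cdot R_m^{-(k-1)}=C\cdot R_m$, and summing the geometric series in $m$ produces the uniform bound $C(n,\epsilon)$. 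The bottom layer, where the recursion stops at scale $\asymp r_i$, is absorbed by the same count together with the elementary observation that only a bounded number of disjoint balls $B_{r_i}(x_i)$ can fit inside any $B_R(q)$ with $R\asymp r_i$.

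The main obstacle is the scale and parameter bookkeeping. Our internal centers sit in $\cS^{k-1}_{\delta,500R}$, whereas the inductive hypothesis requires membership in $\cS^{k-1}_{\delta,\beta_{k-1}\rho}$ at matched radii, and these scales differ by a large factor $500/\beta_{k-1}$. Reconciling them uses Corollary \ref{c:sp in all scale} to transfer $(k,\delta)$-splitting across scales at the cost of weakening $\delta$, and this forces a careful bookkeeping of the constants $\delta$, $\beta$ at each level of the induction; the final $\beta=\beta(n,\epsilon)$ must be chosen small enough to absorb all these conversions together with the tolerance required to invoke Lemma \ref{l:packing_estimate local} at each leaf. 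This parameter-tracking, while standard in this type of recursive-covering argument, is the delicate technical heart of the proof.
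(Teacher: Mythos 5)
Your proposal is correct in outline and takes essentially the same route as the paper: induct on the stratum, stop at the first scale at which a $(k,\delta)$-splitting ball appears, apply Lemma \ref{l:packing_estimate local} on the splitting regions, count the non-splitting covering balls at each dyadic scale $R_m$ by the inductive hypothesis one stratum down, and sum the resulting geometric series $\sum_m R_m^{k}\cdot R_m^{-(k-1)}$. The paper implements the same stopping-time decomposition scale-by-scale, as Vitali covers of the annuli $\widetilde\cS^{\,k}_{\delta,\,R_{\alpha}}\setminus\widetilde\cS^{\,k}_{\delta,\,R_{\alpha+1}}$ (and strips off the deeply singular points $x_i\in\cS^{k}_{\delta',\beta r_i}$ at the outset by the inductive hypothesis, where you instead absorb them at the stopping scale), rather than as a recursive tree, but the key ingredients and estimates coincide.
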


\begin{proof}
We prove by induction on $k$. Let $0<\delta'(n,\epsilon)<\delta(n,\epsilon)<\delta_1(n,\epsilon)<\epsilon$ be determined latter. The constant $C$ may vary line by line. Lemma \ref{l:packing_estimate local} proves the case for $k=0$ as well as the case that $B_{500R}(p)$ is $(k+1,\delta_1)$-splitting. Assume that (i) and (ii) are true for $k<n$. We will prove them for $k+1$, assuming that $B_{500R}(p)$ is not $(k+1,\delta_1)$-splitting.

Not losing generality, assume $R=\frac{1}{500}$. That is, $B_1(p)$ is not $(k+1,\delta_1)$-splitting. We begin with a decomposition of $B_1(p)$. Let $R_\alpha=2^{-\alpha}$, $\alpha\in \dZ$. Recall the definition of the weak $(k,\delta)$-singular set $\widetilde\cS^k_{\delta,r}$ in (\ref{d:WS}).  By Proposition \ref{p:sing.equv}, we have $B_1(p)\subseteq \widetilde\cS^k_{\delta,10}$. Thus
  \begin{align}
    B_1(p)\setminus \widetilde\cS^k_\delta
    \subseteq \widetilde\cS^k_{\delta,10}\setminus \widetilde\cS^k_\delta
    \subseteq\bigcup_{\alpha=-4}^\infty \left(\widetilde\cS^k_{\delta,\,R_{\alpha}}\setminus \widetilde\cS^k_{\delta,\,R_{\alpha+1}}\right).
    \label{t:packing_estimate.e2}
  \end{align}
For each $\alpha$, let
\begin{align}
  \left\{B_{\rho_\alpha}(y^{\,\alpha}_j), \,j\in \mathds J_\alpha\right\}
  \subseteq
  \left\{B_{\frac1{20}R_{\alpha}}(y), \, y\in \widetilde\cS^k_{\delta,\,R_{\alpha}}\setminus \widetilde\cS^k_{\delta,\,R_{\alpha+1}}\right\}
\end{align}
be a Vitali covering of $\left(\widetilde\cS^k_{\delta,\,R_{\alpha}}\setminus \widetilde\cS^k_{\delta,\,R_{\alpha+1}}\right)$, for which $\{B_{\frac15\rho_\alpha}(y_j)\}$ are disjoint but $\{B_{\rho_\alpha}(y_j)\}$ is a covering. 
A useful property for this decomposition is that for each $y\in\{y_j^{\,\alpha}\}$, we have that
$B_{20\rho_\alpha}(y)=B_{R_\alpha}(y)$ is not $(k+1,\delta)$-splitting, but $B_{10\rho_\alpha}(y)=B_{\frac12R_{\alpha}}(y)$ is $(k+1,\delta)$-splitting.

   We first prove (ii), which will be needed in the proof of (i).  
   By the inductive hypothesis, we only need to consider the collection of balls $\Big\{B_{r}(x_i): x_i\in \cS^{k+1}_{\epsilon,\,\Lambda r}$ but $x_i\notin \cS^k_{\delta',\, r}\Big\}$, where $\delta'=\delta'(n,\delta)>0$ will be determined latter. For each $j\in\mathds J_\alpha$, because $y_j^{\,\alpha}\in \widetilde\cS^k_{\delta,\,R_{\alpha}}\subseteq \cS^k_{\delta,\,R_{\alpha}}$, by the inductive hypothesis, we have an upper bound on the number of these balls:
  \begin{align}
     |\mathds J_\alpha|\le C(n,\epsilon) R_\alpha^{-k}.
     \label{l:packing_estimate r-local.e1}
  \end{align}

Recall that $x_i\in \cS^k_{\epsilon,\,\Lambda r}\cap B_{1/500}(p)$ with $r\le 1/500$ and $\{B_{r}(x_i)\}$ are disjoint. Given $j\in\mathds J_\alpha$, let $\mathds I_j^{\,\alpha}=\{i: x_i\in B_{\rho_\alpha}(y_j^{\,\alpha})\}$. We claim that if $\rho_\alpha <r/1000 $, then $\mathds I_j^{\,\alpha}=\varnothing$ for every $j$. Suppose $\rho_\alpha <r/1000 $ but there is $i\in\mathds I_j^{\,\alpha}$ for some $j$. Note then that $R_\alpha=20\rho_\alpha<\frac1{50}r$, we have $B_{r/5}(x_i)\supseteq B_{20\rho_\alpha}(y_j^{\,\alpha})$. Because $B_{20\rho_\alpha}(y_j^{\,\alpha})$ is not $(k+1,\delta)$-splitting, we have $x_i\in\cS^k_{\delta', \,r}$, for some $\delta'(n,\delta)>0$. This contradicts to the assumptions.

Now for each $i\in\mathds I_j^{\,\alpha}$, we have $x_i\in \cS^{k+1}_{\epsilon,\,\Lambda r}$, $r\le 1000\rho_\alpha=50 R_\alpha$, and $B_{10\rho_\alpha}(y_j^{\,\alpha})=B_{\frac12R_{\alpha}}(y)$ is $(k+1, \delta)$-splitting. By Lemma \ref{l:packing_estimate local} (ii) we have
    \begin{align}
     |\mathds I_j^{\,\alpha}|\le C(n,\epsilon, \Lambda)(R_\alpha/r)^{\,k+1}.
     \label{l:packing_estimate r-local.e2}
  \end{align}
Because $\cup_{\alpha\ge -4}\{B_{\rho_\alpha}(y_j^{\,\alpha})\}$ is a covering of $B_1(p)\setminus \widetilde\cS^k_\delta\supseteq B_1(p)\setminus \widetilde\cS^k_{\delta'}\supseteq\{x_i:i\in\mathds I\}$, by (\ref{l:packing_estimate r-local.e1}) and (\ref{l:packing_estimate r-local.e2}), we have
  \begin{align*}
    |\mathds I|&\le \sum_{\frac1{50}r\,\le R_\alpha\,\le 10}\;\sum_{j\in\mathds J_\alpha} |\mathds I_j^{\,\alpha}|
    \\
    &\le \sum_{\alpha=-4}^\infty
     C(n,\epsilon, \Lambda) R_\alpha^{-k} (R_\alpha/r)^{\,k+1}\le C(n,\epsilon, \Lambda)r^{-(k+1)}.
  \end{align*}

We prove (i) in a similar way. By the inductive hypothesis, we only need to consider the balls $\Big\{B_{r_i}(x_i): x_i\in \cS^{k+1}_{\epsilon,\,\beta  r_i} \text{ but } x_i\notin \cS^k_{\delta',\, \beta r_i}\Big\}$, for some $\delta'(n,\delta)>0$. Given $j\in\mathds J_\alpha$, let $\mathds I_j^{\,\alpha}=\{i: x_i\in B_{\rho_\alpha}(y_j^{\,\alpha})\}$. We claim that for every $i\in \mathds I_j^{\,\alpha}$, we have $r_i\le \frac{1000}{\beta }\rho_\alpha$. If this is not true, then $B_{\beta r_i/5}(x_i)\supseteq B_{20\rho_\alpha}(y_j^{\,\alpha})$. Because $B_{20\rho_\alpha}(y_j^{\,\alpha})$ is not $(k+1,\delta)$-splitting, we have that $B_{\beta r_i}(x_i)$ is not $(k+1,\delta')$-splitting for some $\delta'=\delta'(n,\delta)>0$. Thus $x_i\in \cS^k_{\delta', \,\beta r_i}$, which contradicts to the assumptions.

Note that $x_i\in \cS^{k+1}_{\epsilon,\,\beta  r_i}$ and $B_{10\rho_\alpha}(y_j^{\,\alpha})=B_{\frac12R_{\alpha}}(y)$ is $(k+1,\delta)$-splitting. We can apply Lemma \ref{l:packing_estimate local} (i) and get
  \begin{align}
     \sum_{i\in \mathds I_j^{\,\alpha}} r_i^{\,k+1}\le C(n,\epsilon)\rho_\alpha^{\,k+1}= C(n,\epsilon)R_\alpha^{\,k+1}.
     \label{l:packing_estimate r-local.e3}
  \end{align}
Note that (\ref{l:packing_estimate r-local.e1}), which was proved in the course of proving (ii), still holds. Combine (\ref{l:packing_estimate r-local.e1}) and (\ref{l:packing_estimate r-local.e3}). We have
  \begin{align}
    \sum_{i\in\mathds I} r_i^{\,k+1}
    &\le\sum_{\alpha=-4}^\infty\sum_{j\in\mathds J_\alpha} \sum_{i\in\mathds I_j^{\,\alpha}} r_i^{\,k+1}
    \notag
    \\
    &\le \sum_{\alpha=-4}^\infty\sum_{j\in\mathds J_\alpha} C(n,\epsilon)R_\alpha^{\,k+1}
    \notag
    \\
    &\le \sum_{\alpha=-4}^\infty C(n,\epsilon)R_\alpha^{\,-k}R_\alpha^{\,k+1}
    \notag
    \\
    &\le \sum_{\alpha=-4}^\infty C(n,\epsilon) R_\alpha
    \le C(n,\epsilon).
  \end{align}
\end{proof}

\section{Sharpness of the Rectifiability}\label{s:examples}

In this section we prove Theorem \ref{t:sharp rect.cantor}. Let us begin with a smoothing lemma.

  \begin{lemma}\label{l:F convex f}
    Let $\mathbb U\subset\dR^n$ be a compact convex subset and $f\colon \mathbb U\to\dR$ be a strictly convex function. Let $\Omega=\cup_{i=1}^\infty \Omega_i$, where $\Omega_i$ are disjoint open convex subsets in $\mathbb U$. For any $\delta>0$, there exists a strictly convex function $F\colon\mathbb U\to\dR$ such that the following hold.
    \begin{enumerate}
    \renewcommand{\labelenumi}{(\roman{enumi})}
    \setlength{\itemsep}{1pt}
    \item $F|_\Omega$ is $C^\infty$.
    \item $F|_{\mathbb U\setminus\Omega}=f|_{\mathbb U\setminus\Omega}$ and $|F-f|<\delta$ on $\Omega$,
    \item For any $x\notin\Omega$ and any vector $v$, it holds that
    \begin{align}
    \underset{t\to 0^+}\lim\frac{F(x+tv)-F(x)}{t}
    =\underset{t\to 0^+}\lim\frac{f(x+tv)-f(x)}{t}.
    \label{l:F convex f.e1}
    \end{align}
    In particular, if $Df(x)$ exists at $x\notin\Omega$, then $DF(x)=Df(x)$.

\end{enumerate}
\end{lemma}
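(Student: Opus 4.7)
The plan is to treat each $\Omega_i$ independently and then assemble. Since the $\Omega_i$ are disjoint and (ii) forces $F\equiv f$ on $\mathbb{U}\setminus\Omega$, it suffices to construct, for each $i$, a $C^\infty$ strictly convex function $F_i$ on $\Omega_i$ which is within $\delta$ of $f$, extends continuously to $\partial\Omega_i$ with limiting boundary values equal to $f$, and whose one-sided directional derivative at any $x\in\partial\Omega_i\cap(\mathbb{U}\setminus\Omega)$ in any direction $v$ entering $\Omega_i$ equals the corresponding one-sided derivative of $f$. Setting $F=F_i$ on $\Omega_i$ and $F=f$ elsewhere, global strict convexity then follows from a line-segment gluing principle: along any segment in $\mathbb{U}$, $F$ restricts to a continuous piecewise strictly convex function whose one-sided derivatives agree at the interface points (all of which lie in $\mathbb{U}\setminus\Omega$), and any such function is globally strictly convex.

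Fix one $\Omega=\Omega_i$ and extend $f$ convexly to a neighborhood of $\overline{\Omega}$ (by taking the supremum of its affine minorants on $\mathbb{U}$). Choose a smooth compactly supported nonnegative mollifier $\rho$ with $\int\rho=1$, and a smooth positive function $\epsilon\colon\Omega\to(0,\delta_0]$ that, together with all its derivatives, vanishes at $\partial\Omega$ (for example $\epsilon(x)=\delta_0\cdot\exp(-1/d(x,\partial\Omega)^2)\cdot d(x,\partial\Omega)/2$), and which satisfies $\epsilon(x)\leq c\,d(x,\partial\Omega)$ for a small constant $c$. Define the variable-scale mollification
\begin{align*}
F(x)\;=\;\int_{\dR^n}f\bigl(x-\epsilon(x)z\bigr)\,\rho(z)\,dz,\qquad x\in\Omega.
\end{align*}
Differentiation under the integral --- justified by smoothness of $\epsilon$, compact support of $\rho$, and local Lipschitz continuity of the convex $f$ --- shows $F\in C^\infty(\Omega)$, giving (i). The pointwise estimate $|F(x)-f(x)|\leq C\,\epsilon(x)$ with $C$ depending on the local Lipschitz constant of $f$ yields $|F-f|<\delta$ in (ii) by taking $\delta_0$ small. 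For (iii), at a boundary point $x_0\in\partial\Omega$ with $v$ pointing into $\Omega$, the rapid decay of $\epsilon$ gives $\epsilon(x_0+tv)=o(t)$ as $t\to 0^+$, hence $F(x_0+tv)=f(x_0+tv)+o(t)$, so the difference quotient $[F(x_0+tv)-F(x_0)]/t$ converges to $f'(x_0;v)$.

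The main obstacle is verifying that $F$ is strictly convex on $\Omega$, since variable-scale mollification does not automatically preserve convexity. Differentiating twice and using a.e.-twice-differentiability of $f$ (Alexandrov's theorem), one obtains
\begin{align*}
\nabla^2 F(x)\;=\;\int\bigl(I-z\otimes\nabla\epsilon(x)\bigr)^{T}\nabla^2 f\bigl(x-\epsilon(x)z\bigr)\bigl(I-z\otimes\nabla\epsilon(x)\bigr)\rho(z)\,dz\;+\;R(x),
\end{align*}
where the integral term is PSD (since $\nabla^2 f\succeq 0$ a.e.) and the remainder $R(x)$ is bounded by a constant depending on the Lipschitz constant of $f$ times $\bigl(\|\nabla\epsilon\|+\|\nabla^2\epsilon\|\bigr)$. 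Choosing the profile of $\epsilon$ near $\partial\Omega$ flat enough so that $\|\nabla\epsilon\|$ and $\|\nabla^2\epsilon\|$ are arbitrarily small makes the PSD term dominate, giving convexity of $F$; strict convexity is then inherited from that of $f$ through the averaging, using that on the compact set $\mathbb{U}$ the strictly convex $f$ admits a positive modulus of strict convexity. The delicate balance between the mollified-Hessian term and $R(x)$ --- in particular at points where $\nabla^2 f$ degenerates, permitted since $f$ is only strictly (not uniformly) convex --- is the technical heart of the argument.
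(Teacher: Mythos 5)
Your overall architecture (smooth strictly convex approximation on each $\Omega_i$ with error vanishing fast at $\partial\Omega_i$, matching one-sided derivatives at boundary points, then a one-dimensional gluing principle for convexity) parallels the paper's proof, whose gluing step is exactly Lemmas \ref{l:convex-der} and \ref{l:convex-combine}. The difference is where the smooth strictly convex approximant on $\Omega_i$ comes from: the paper simply invokes Greene--Wu (Theorem 1.1 of \cite{GW79}) with the error function $e^{-\delta/d(x,\,\mathbb U\setminus\Omega)}$, whereas you try to build it by hand via a variable-scale mollification $F(x)=\int f(x-\epsilon(x)z)\rho(z)\,dz$. That is where your argument has a genuine gap: convexity of a variable-scale mollification is precisely the hard content of the Greene--Wu theorem, and your Hessian-domination argument does not establish it. The first term in your formula for $\nabla^2 F$ is only positive \emph{semi}-definite, and it is (roughly) the average of $\nabla^2 f$ at scale $\epsilon(x)$ in the given direction; since $f$ is only strictly convex, this average can be arbitrarily small in some direction at some points (think of $t\mapsto t^4$: strict convexity on a compact set gives a positive modulus of midpoint convexity but no pointwise lower bound on second differences at small scales). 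Meanwhile $R(x)$ involves $\nabla^2\epsilon(x)$, which is nonzero throughout $\Omega_i$, and the moment $\int \nabla f(x-\epsilon(x)z)\cdot z\,\rho(z)\,dz$ is generically nonzero (of the order of the oscillation of $\nabla f$ at scale $\epsilon(x)$, which is governed by the curvature of $f$ in \emph{other} directions). So "choose $\epsilon$ flat enough" is not a pointwise fix: at a point where $f$ is nearly flat in direction $v$ but curved transversally, the $(v,v)$-entry of the semidefinite term can be far smaller than the $(v,v)$-entry of $R(x)$, and no global smallness of $\|\nabla\epsilon\|,\|\nabla^2\epsilon\|$ repairs this. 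You acknowledge this is "the technical heart" but do not supply the argument, so strict convexity (indeed convexity) of $F$ on $\Omega$ is unproven.

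Two secondary issues: the weight $\epsilon(x)$ you propose is built from $d(x,\partial\Omega)$, which for a general convex $\Omega_i$ is not smooth, so the claimed $C^\infty$ regularity of $F$ on $\Omega$ needs $\epsilon$ to be replaced by a smooth minorant of the distance; and your Hessian identity uses the a.e.\ (Alexandrov) second derivative of $f$, which omits the singular part of the distributional Hessian (harmless for the inequality you want, but the formula as an equality is not justified). The cleanest repair is the paper's: take $g_i$ on $\Omega_i$ directly from \cite{GW79} with prescribed error $\epsilon(x)=e^{-\delta/d(x,\,\mathbb U\setminus\Omega)}$, which gives (i), (ii) and the infinite-order boundary matching needed for (iii), and then your gluing step goes through essentially as you wrote it (with the interface condition $h'_-\le h'_+$ as in Lemma \ref{l:convex-combine}, noting that a line may meet countably many $\Omega_i$, each in a single subinterval).
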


  \begin{proof}
  Let
  \begin{align}
    \epsilon(x)= e^{-\frac{\delta}{d(x,\,\mathbb U\setminus \Omega)}}
  \end{align}
  be an error function defined on $\Omega$.
  By Theorem 1.1 in \cite{GW79}, for each $i$, there exists a strictly $C^\infty$ convex function $g_i\colon \Omega_i\to\dR$ such that for any $x\in\Omega_i$, we have
  \begin{align}
    |f(x)-g_i(x)|\le\epsilon(x).
    \label{t:sharp rect.e03}
  \end{align}
  Let $F\colon \mathbb U\to\dR$ be the gluing of all of $g_i$ and $f|_{\mathbb U\setminus \Omega}$. That is,
  \begin{align}
    F(x)=
    \renewcommand{\arraystretch}{1.5}
   \left\{\begin{array}{@{}l@{\quad}l@{}}
    g_i(x), & \text{ if \quad} x\in \Omega_i;
      \\
      f(x), & \text{ if \quad} x\notin \Omega.
  \end{array}\right.
    \label{t:sharp rect.e10}
  \end{align}
  It is obvious that (i) and (ii) are satisfied. The following estimates (\ref{t:sharp rect.e8-1}) and (\ref{t:sharp rect.e8}) imply (iii). If $x, y\notin\Omega$, it is obvious that
    \begin{align}
    \Big||F(x)-F(y)|- |f(x)-f(y)|\Big|=0.
    \label{t:sharp rect.e8-1}
    \end{align}
  For any $x\notin\Omega$ and $y\in \Omega$, we have $y\in\Omega_i$ for some $i$ and thus
  \begin{align}
    \Big||F(x)-F(y)|- |f(x)-f(y)|\Big|
    &=\Big||f(x)-g_i(y)|- |f(x)-f(y)|\Big|
    \notag
    \\
    &\le|g_i(y)-f(y)|
    \notag
    \\
    &\le e^{-\frac{\delta}{d(y,\,\mathbb U\setminus \Omega)}}
    \notag
    \\
    &\le e^{-\frac{\delta}{d(x,\,y)}}.
    \label{t:sharp rect.e8}
  \end{align}

  It's clear that $F$ is strictly convex on each of $\Omega_i$. It remains to show that $F$ is strictly convex, moving out from $\Omega_i$. We need the following two lemmas which we will outline the proof latter. They are well known to the experts.

  \begin{lemma}\label{l:convex-der}
    A Lipschitz function $h\colon[a,b]\to \dR$ is convex if and only if for any non-negative smooth function $\phi\colon[a,b]\to\dR$, it holds that
  \begin{align}
    \int_a^b h'(t)\,\phi'(t)\,dt\le h'_-(b)\,\phi(b)-h'_+(a)\,\phi(a).
    \label{t:sharp rect.e4}
  \end{align}
  Here $h'_\pm$ denote the one-sided derivatives.
  \end{lemma}

  \begin{lemma}\label{l:convex-combine}
    Let $h\colon[a,c]\to\dR$ be a Lipschitz function and $b\in[a,c]$. If $h|_{[a,b]}$ and  $h|_{[b,c]}$ are both convex functions and $h'_-(b)\le h'_+(b)$, then $h$ is a convex function over $[a,c]$.
  \end{lemma}

  Now we show that $F$ is a convex function. It is obvious that $F$ is locally convex for any $x\notin \partial\Omega_i$. For $x\in\partial \Omega_i$, we show that $F$ is convex along each line passing through $x$ in $\mathbb U$. Let $\gamma(s)=x+sv$, $s\in(-\epsilon, \epsilon)$ be a unit speed geodesic in $\mathbb U$ and $h(s)=F(x+sv)$. By (\ref{l:F convex f.e1}) and the fact that $f$ is convex, we have
  \begin{align}
    h'_-(0)&=\lim_{t\to 0^-}\frac{F(x+tv)-F(x)}{t}
    \notag\\
    &=\lim_{t\to 0^-}\frac{f(x+tv)-f(x)}{t}\le \lim_{t\to 0^+}\frac{f(x+tv)-f(x)}{t}
    \notag\\
    &=\lim_{t\to 0^+}\frac{F(x+tv)-F(x)}{t}
    =h'_+(0).
  \end{align}
  Then the convexity of $F$ follows from Lemma \ref{l:convex-combine}. By (iii), $F$ is also strictly convex.
  \end{proof}

  \begin{proof}[Proof of Lemma \ref{l:convex-der}]
    The necessity is obvious. To prove the sufficiency it is sufficient to verify
  $$\frac{h(t_2)-h(t_1)}{t_2-t_1}-\frac{h(t_3)-h(t_2)}{t_3-t_2}\le 0
  $$
  for every $a\le t_1< t_2< t_3\le b$. This can be proved by a direct computation with $\phi(t)$ chosen as a smooth approximation to
  $$\psi(t)
     =\renewcommand{\arraystretch}{1.5}
     \left\{\begin{array}{@{}l@{\quad}l@{}}
    0, & \text{ if \quad} t\le t_1, \\
    \frac{t-t_1}{t_2-t_1}, & \text{ if \quad} t_1<t\le t_2, \\
    \frac{t_3-t}{t_3-t_2}, & \text{ if \quad} t_2<t< t_3, \\
    0, & \text{ if \quad} t\ge t_3.
  \end{array}\right.
  $$
  \end{proof}

  \begin{proof}[Proof of Lemma \ref{l:convex-combine}]
    By Lemma \ref{l:convex-der}, for any non-negative smooth function $\phi:[a,b]\to[0,\infty)$, we have
  \begin{align}
    &\int_{a}^{\,b} h'(t)\,\phi'(t)\,dt\le h'_-(b)\,\phi(b)-h'_+(a)\,\phi(a),
    \label{t:sharp rect.e5}
    \\
    &\int_b^{\,c} h'(t)\,\phi'(t)\,dt\le h'_-(c)\,\phi(c)-h'_+(b)\,\phi(b).
    \label{t:sharp rect.e6}
  \end{align}
  Sum up the two inequalities and apply Lemma \ref{l:convex-der} again, we get the desired result.
  \end{proof}

\begin{proof}[Proof of Theorem \ref{t:sharp rect.cantor}]


Let $Z=\bar B_1(O)\subset \dR^2$ be a closed unit disk centered at $p$. Fix $0<\delta<1$ and define a strictly concave function on $Z$:
\begin{align}
   f_0(z) &= \renewcommand{\arraystretch}{1.5}
   \left\{\begin{array}{@{}l@{\quad}l@{}}
    \sqrt{d(z,\partial Z)} & \text{ if \quad} d(z,\partial Z)\le \frac14,
    \\
    \delta\cdot \sqrt{d(z,\partial Z)}+(1-\delta)\cdot \frac12 & \text{ if \quad} d(z,\partial Z)> \frac14.
  \end{array}\right.
  \label{t:sharp rect.e1}
\end{align}
Let $Z_t=\{z\in Z: f_0(z)\ge t\}$ be the sub-level set. We denote the subgraph of $f\colon Z\to\dR^+$ by
$$G_{Z,f}=\Big\{(z,t)\in Z\times\dR:0\le t\le f(z) \Big\}.$$

Because $f_0$ is strictly concave, we have $X_0=G_{Z,f_0}\in\Alex^3(0)$ with boundary.  See Figure \ref{figure 3} below.
  \begin{figure}[!h]
  \includegraphics[scale=1]{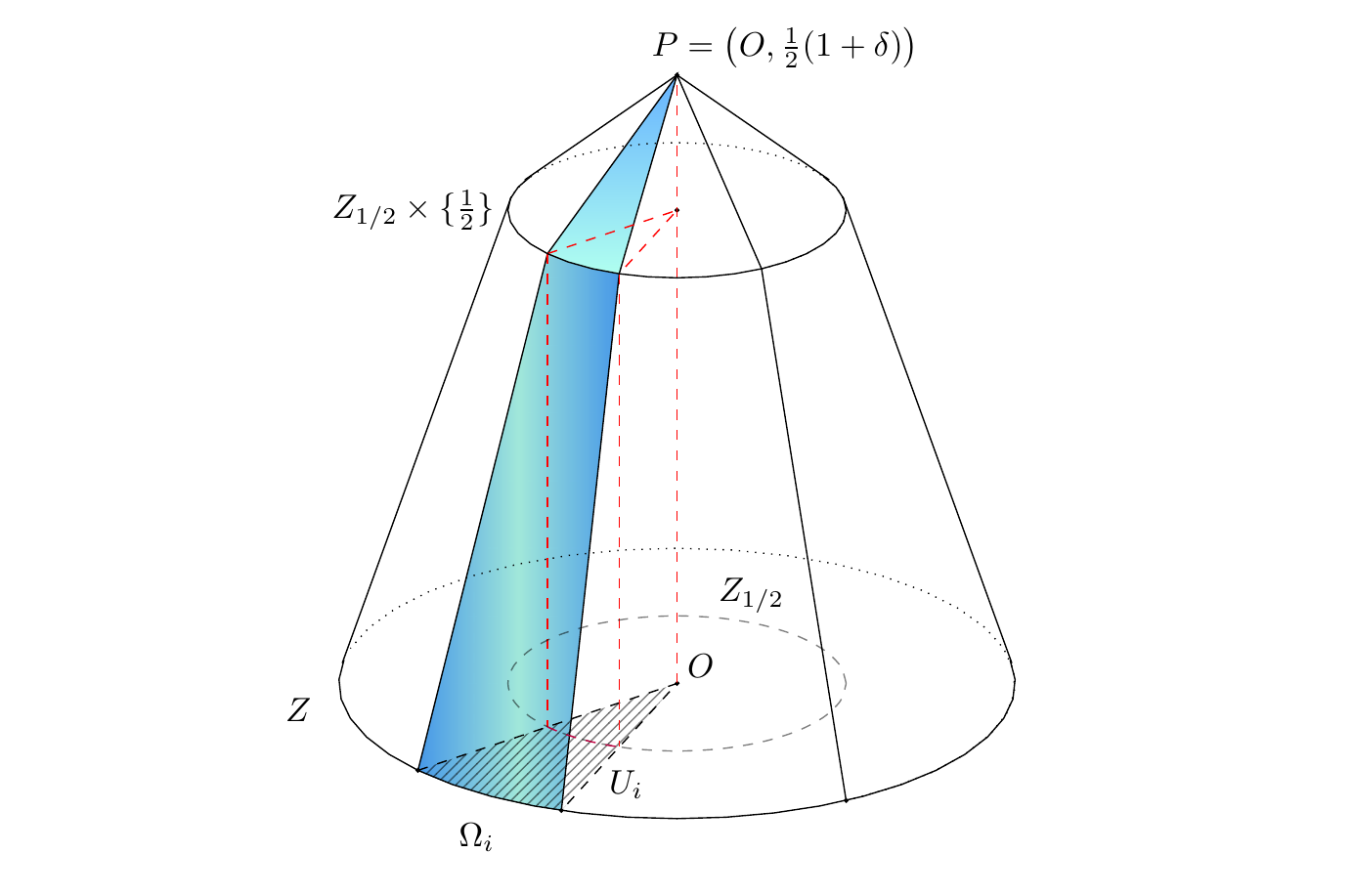}
  \caption{$X_0=G_{Z,f_0}\in\Alex^3(0)$}
  \label{figure 3}
  \end{figure}
For $\delta>0$ small the following hold:
\begin{enumerate}
  \renewcommand{\labelenumi}{(\arabic{enumi})}
  \setlength{\itemsep}{1pt}
  \item $\cS(X_0)=\partial X_0$,
  \item $ \cS^1_\epsilon(X_0)\setminus \cS^{0}(X_0)=\left(\partial Z_{1/2}\times\{\frac12\}\right)\cup \left(\partial Z\times\{0\}\right)$
  \item $\cS^{0}(X_0)=\{P\}=\left\{\left(O,\frac12(1+\delta)\right)\right\}$ is the tip of the graph.
\end{enumerate}

Not losing generality, let $T\subseteq \partial Z_{1/2}$ be any closed subset. Then $\displaystyle\partial Z_{1/2}\setminus T=\cup_{i=1}^\infty U_i$ is a union of disjoint open intervals.  Let $\Omega_i$ be the open sectors in $Z$ corresponding to the arc $U_i$. That is, $\Omega_i=\{x\in Z^\circ\colon \text{ray } \lambda\cdot\overrightarrow{Ox}\cap U_i\neq\varnothing\}$, as the shaded region in Figure \ref{figure 3}. Clearly, $\{\Omega_i\}$ is a collection of disjoint open convex sets.

Now apply Lemma \ref{l:F convex f} to $f_0:Z\to \dR$ on $\cup_{i=1}^\infty \Omega_i$ to obtain a strictly convex function $f_1:Z\to \dR$ which is smooth on $\cup_{i=1}^\infty \Omega_i$ and $f_1=f_0$ away from $\cup_{i=1}^\infty \Omega_i$. Now consider the new subgraph $X_1=G_{Z,\,f_1}\in\Alex^3(0)$. Note that if $f_1$ is smooth at a point $x\in Z^\circ$, then the tangent cone of $X_1$ at $(x,f_1(x))\in\partial X_1$ is a three dimensional half space. If $x\in \partial Z_{1/2}\setminus \cup_{i=1}^\infty \Omega_i\equiv T$, then the tangent cone at $(x,f_1(x))\in\partial X_1$, which is isometric to the tangent cone of $X_0$ at $(x,f_0(x))$, splits off only $\dR^1$. Therefore we have
\begin{enumerate}
  \renewcommand{\labelenumi}{(\arabic{enumi})}
  \setlength{\itemsep}{1pt}
    \setcounter{enumi}{3}
  \item $\cS(X_1)=\partial X_1$,
  \item $\cS^1_\epsilon(X_1)\setminus \cS^0(X_1)=\left(T\times\{\frac12\}\right)\cup \left(\partial Z\times\{0\}\right)$,
  \item $\cS^{0}(X_1)=\left\{P\right\}$ is the tip of the graph.
\end{enumerate}
A similar, but less involved, smoothing procedure can be performed in a small neighborhood of $\partial Z\times\{0\}$ and $P$ so that the resulting space $X_2\in\Alex^3(0)$ satisfies
\begin{enumerate}
  \renewcommand{\labelenumi}{(\arabic{enumi})}
  \setlength{\itemsep}{1pt}
    \setcounter{enumi}{6}
  \item $\cS(X_2)=\partial X_2$,
  \item $\cS^1_\epsilon(X_2)=T\times\{\frac12\}$,
  \item $\cS^{0}(X_2)=\varnothing$.
\end{enumerate}
Finally, we double $X_2$ and arrive at a boundary free space $Y\in\Alex^3(0)$ which satisfies
\begin{enumerate}
  \renewcommand{\labelenumi}{(\arabic{enumi})}
  \setlength{\itemsep}{1pt}
    \setcounter{enumi}{9}
  \item $\cS(Y)=\cS^1_\epsilon(Y)=T\times\{\frac12\}$;
  \item $\cS^{0}(Y)=\varnothing$.
\end{enumerate}
By performing similar smoothing procedures it is easy to verify that $Y$ can be realized as a non-collapsed limit of $3$-dimensional manifolds with $\sec\ge 0$.
\end{proof}

%
%
%
%
%
%
%
%
%

\bibliographystyle{amsalpha}

\end{document}